\newtheorem{theorem}{Theorem}
\newtheorem{corollary}{Corollary}
\newtheorem{lemma}{Lemma}
\newtheorem{proposition}{Proposition}
\newtheorem{remark}{Remark}
\newcommand{\N}{{\mathbb N}}
\newcommand{\R}{{\mathbb R}}
\newcommand{\pa}{{\partial}}
\newcommand{\na}{{\nabla}}
\newcommand{\dis}{{\displaystyle}}
\newcommand{\eps}{{\varepsilon}}
\newcommand{\I}{\mathcal{I}}
\renewcommand{\a}{\alpha}
\newcommand{\D}{\Delta}
\newcommand{\om}{\omega}
\newcommand{\e}{\varepsilon}
\def\curl{\hbox{curl \!}}
\def\div{\hbox{div  }}
\def\np{\dot{n}}
\def\up{\dot{u}}
\def\fp{\dot{\phi}}
\def\ZZ{\mathcal{Z}}
\newcommand{\Black}{\color{black}}
\begin{document}

\title{Quasineutral limit of the Euler-Poisson system for ions\\ in a domain with boundaries II}
\author{David G\'erard-Varet}
\address{D. G\'erard-Varet \\ Institut de Math\'ematiques de Jussieu (UMR 7586), Universit\'e Paris-Diderot} 
\email{gerard-varet@math.jussieu.fr}
\author{Daniel Han-Kwan}
 \address{D. Han-Kwan \\ CNRS $\&$ Centre de Math\'ematiques Laurent Schwartz (UMR 7640), \'Ecole Polytechnique}
\email{daniel.han-kwan@math.polytechnique.fr}
 \author{Fr\'ed\'eric Rousset}
  \address{F. Rousset \\ Laboratoire de Math\'ematiques d'Orsay (UMR 8628), Universit\'e Paris-Sud et Institut Universitaire de France}
  \email{frederic.rousset@math.u-psud.fr}
  
  \maketitle
  
  \begin{abstract}
In this paper, we study the quasineutral limit of the isothermal Euler-Poisson equation for ions, in a domain with boundary. 
This is a follow-up to our previous work \cite{GVHKR}, devoted to  no-penetration  as well as subsonic outflow boundary conditions. We focus here on the case of supersonic outflow velocities. The structure of the boundary layers and the stabilization mechanism are different.

\end{abstract}
  
 \section{Introduction}
  
  This work is about the quasineutral limit of the isothermal Euler-Poisson equation,  describing ions in a plasma. We focus  on the role of  the boundary of  the plasma domain, and the associated boundary layer.  It complements our previous work on the topic \cite{GVHKR} (see also \cite{CG}, \cite{SS}).  We refer to the introduction of \cite{GVHKR} for a substantial  bibliography on the quasineutral limit and related issues. 

In the Euler-Poisson model, ions are described by their density $n \geq 0$ and their velocity field $u \in \R^3$, while electrons are assumed to follow a \emph{Maxwell-Boltzmann} law, i.e. their density is given by $e^{-\phi}$, where $\phi$ denotes the electric potential. We consider that the plasma is contained in the domain $\R^3_+ := \{ x=(y,z) \in \R^2 \times \R^+\}$ (yet, general domains can actually be considered, see \cite[Section 5.1]{GVHKR}). The isothermal Euler-Poisson equation under study then reads, for $(t,x) \in \R^+ \times \R^3_+$,
  \begin{equation} \label{EP}
\left\{
\begin{aligned}
& \pa_t n \: + \:  \div(n u) \:  = \: 0, \\
& \pa_t u  \: + \:  u \cdot \na u  \: + \: T^i \,  \na \ln(n)= \na \phi, \\
& \eps^2 \Delta \phi \: + \: e^{-\phi}  = n, \\
& \phi\vert_{x_3 = 0} = \phi_b,
\end{aligned}
\right. 
\end{equation}
where $\eps>0$ is loosely speaking the ratio between the \emph{Debye length} of the plasma and the typical length of observation. In all practical situations, it is a small parameter. The parameter $T^i>0$ is the temperature of the ions. We consider in this work the case of supersonic outflow velocities. It means that we consider initial velocity fields $u(0)=(u_1(0),u_2(0),u_3(0))$ satisfying 
  $$
  u_3(0,y,0) < -\sqrt{T^i}
  $$
  in which case no boundary condition is needed for the Euler system, since there are only outgoing characteristics. On the other hand, we enforce a Dirichlet boundary condition on the electric potential. 

 We also fix some constant reference state: $n_{ref} > 0$, $u_{ref} = (0,0,w_{ref})$ with $w_{ref} < 0$. 
 We set $\phi_{b}= \phi_{c} + (\phi_{ref})_{|x_{3}= 0}$, with $\phi_{ref} = -\ln(n_{ref})$, and $\phi_c \in H^\infty(\R^2)$.     
  
  We are interested in the behaviour of the solutions of \eqref{EP} as $\eps$ goes to $0$, that is in the so-called \emph{quasineutral limit}. We refer once again to the introduction of \cite{GVHKR} for an extensive discussion. The expected formal limit, obtained by taking directly $\eps=0$, is the isothermal Euler system:
    \begin{equation} \label{EI}
\left\{
\begin{aligned}
& \pa_t n \: + \:  \div(n u) \:  = \: 0, \\
& \pa_t u  \: + \:  u \cdot \na u  \: + \: (T^i+1) \,  \na \ln(n)= 0,
\end{aligned}
\right. 
\end{equation}
  together with the neutrality relation $n = e^{-\phi}$. With regards to this last relation and  the Dirichlet condition (\ref{EP}d) on $\phi$, one would like to impose the boundary condition   $n\vert_{x_3 = 0} = e^{-\phi_b}$. However, this condition is not {\it a priori} compatible with the hyperbolic system \eqref{EI}, therefore we expect the formation of a \emph{boundary layer} in the vicinity of the boundary $\{x_3=0\}$. \Black

  In our previous work \cite{GVHKR}, we have considered subsonic outflow velocities. In this setting, a  boundary condition on the normal velocity  has to be added:
  $$
  u \cdot n = \overline{u}.
  $$
  In \cite{GVHKR}, we have notably focused on the non-penetration boundary condition (that is $\overline{u}=0$), and we have also considered the subsonic case $-\sqrt{T^i} < \overline{u}<0$.

In this paper, we shall concentrate on the supersonic case
\begin{equation}
\label{bohmintro}
  u_3(0,y,0) < -\sqrt{T^i+1},
\end{equation}
 We will comment briefly on  the intermediate case  $-\sqrt{T^i+1}<u_3(0,y,0) < -\sqrt{T^i}$, see  Section~\ref{noBL}.
  
  The supersonic condition \eqref{bohmintro} is usually called  \emph{Bohm condition} (or \emph{criterion}), while the boundary layer is usually referred to as the \emph{sheath} in the physics literature. It plays an important role in the study of confined plasmas: see for instance the book of Lieberman and Lichtenberg \cite[Chapter 6]{LL} and  the review paper of Riemann \cite{Rie}.

  \subsection{Main result}

  The main result proved in this paper is the following Theorem.
  \begin{theorem}
\label{theomain}
Let $(n^{0}, u^{0})$  a solution to \eqref{EI} such that
$(n^0- n_{ref}, u^0- u_{ref})\in C^0([0,T],H^s(\R^3_{+}))$ 
 with $s$ large enough. We assume that 
 $$ \sup_{[0, T]}  \sup_{y \in \mathbb{R}^2}\big(u^0_3(t,y, 0 )  + \sqrt{ T^i+ 1 } \big) < 0$$
  and that 
  \begin{equation}
  \label{smallintro} \sup_{[0, T]}  \sup_{y \in \mathbb{R}^2} \big( |n^0(t,y,0) - e^{-\phi_{b}} | +  |u_{1, 2}(t,y,0)| \big) \leq \delta, 
  \end{equation}
  for some sufficiently small $\delta$.
   Then, there is $\e_0>0$ such that for any $\e \in (0,\e_0]$, there exists $(n^\e,u^\e)$ a solution to \eqref{EP} also defined on $[0,T]$ such that
 \begin{equation*}
  \begin{aligned}
&\sup_{[0,T]}\left( \| n^\e- n^0 \|_{L^2(\R^3_+)}  +   \| u^\e- u^0 \|_{L^2(\R^3_+)}\right) \rightarrow_{\eps \rightarrow 0} 0.
  \end{aligned}
  \end{equation*}
  Furthermore, the rate of convergence is $O(\sqrt{\e})$.
    \end{theorem}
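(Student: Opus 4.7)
The strategy is the classical one for singular perturbation problems with boundary layers: build an approximate solution $(n^{app}, u^{app}, \phi^{app})$ by matched asymptotics, then control the remainder by energy estimates. The supersonic condition \eqref{bohmintro} plays a dual role: it ensures that the hyperbolic Euler block has only outgoing characteristics at $\{x_3 = 0\}$, bypassing the need to match a velocity boundary condition, and it coincides with the Bohm criterion ensuring the existence of the \emph{sheath} profile. The construction of the latter, which is genuinely nonlinear, is the crux of the argument.

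\emph{Approximate solution and sheath profile.} One looks for
\begin{equation*}
(n^{app}, u^{app}, \phi^{app})(t,x) = \sum_{k=0}^K \varepsilon^k\bigl((n^k, u^k, \phi^k)(t,x) + (N^k, U^k, \Phi^k)(t,y,Z)\bigr), \qquad Z = x_3/\varepsilon,
\end{equation*}
with $(n^0, u^0)$ the prescribed solution of \eqref{EI}, $\phi^0 = -\ln n^0$ enforced by the quasineutrality relation, and profiles $(N^k, U^k, \Phi^k)$ decaying as $Z \to +\infty$. Plugging this ansatz into \eqref{EP} and identifying powers of $\varepsilon$ decouples the problem into linear interior equations for $(n^k, u^k, \phi^k)$ and, for the leading profiles $\bar n := n^0(t,y,0) + N^0$, $\bar u_3 := u^0_3(t,y,0) + U^0_3$, $\bar\phi := \phi^0(t,y,0) + \Phi^0$, the nonlinear sheath ODE system
\begin{equation*}
\pa_Z(\bar n \bar u_3) = 0, \qquad \bar u_3 \pa_Z \bar u_3 + T^i\, \pa_Z \ln \bar n = \pa_Z \bar\phi, \qquad \pa_Z^2 \bar\phi + e^{-\bar\phi} = \bar n,
\end{equation*}
with $\bar\phi(0) = \phi_b$ and convergence to the interior trace at $Z = +\infty$; the tangential components $\bar u_i$ ($i = 1, 2$) propagate trivially in $Z$ since $\bar u_3 \pa_Z \bar u_i = 0$. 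Mass conservation and a Bernoulli-type first integral of the normal momentum equation reduce the system to a single second order autonomous ODE for $\bar\phi$. A phase-plane analysis shows that the target equilibrium is a hyperbolic saddle exactly when $u^0_3(t,y,0)^2 > T^i + 1$, i.e. under \eqref{bohmintro}; its one-dimensional stable manifold, parameterized by the boundary value, supplies the desired profile as long as $\phi_b$ is close enough to the trace of $\phi^0$, which is guaranteed by \eqref{smallintro}. Smooth $(t,y)$-dependence and uniform exponential decay in $Z$ are inherited from the smoothness of this stable manifold. Linear ODEs then determine $(N^k, U^k, \Phi^k)$ and $(n^k, u^k, \phi^k)$ for $k \geq 1$, producing an approximate solution whose residual in \eqref{EP} is $O(\varepsilon^{K+1})$.

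\emph{Remainder estimate and main obstacle.} Writing $(n^\varepsilon, u^\varepsilon, \phi^\varepsilon) = (n^{app}, u^{app}, \phi^{app}) + (\tilde n, \tilde u, \tilde\phi)$, the triple $(\tilde n, \tilde u, \tilde\phi)$ satisfies a quasilinear system that is a perturbation of the linearization of \eqref{EP} at the approximate solution, driven by the $O(\varepsilon^{K+1})$ source. One closes the argument by a Gronwall estimate in $L^2$, complemented by conormal Sobolev estimates to handle the nonlinearity: the supersonic condition \eqref{bohmintro} is preserved for $\varepsilon$ small, so the integration by parts in the hyperbolic block produces a favourable sign at $\{x_3 = 0\}$, while the Poisson equation with homogeneous Dirichlet data yields $\|\tilde\phi\|_{H^1} \lesssim \|\tilde n\|_{L^2}$ uniformly in $\varepsilon$ thanks to the ellipticity of $-\varepsilon^2 \Delta + e^{-\phi^{app}}$. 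Choosing $K$ sufficiently large absorbs the residual and yields the claimed $O(\sqrt\varepsilon)$ rate. The main analytic obstacle is the sheath construction with the needed uniformity: the saddle-to-boundary-value map must be inverted smoothly and uniformly in the parameters $(t,y)$ throughout the small window allowed by \eqref{smallintro}, and $(t,y)$-derivatives of the resulting profile must decay exponentially in $Z$ to be compatible with the energy estimates for the remainder. Accommodating the singular term $\varepsilon^2 \Delta$ and the boundary layer scales in conormal Sobolev norms is a second, more technical, difficulty.
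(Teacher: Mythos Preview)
Your outline of the approximate-solution step is accurate and matches the paper's Theorem~\ref{deriv}. The gap is in the remainder estimate, which you treat as a routine Gronwall argument with a favourable boundary sign and an elliptic bound $\|\tilde\phi\|_{H^1}\lesssim\|\tilde n\|_{L^2}$. That strategy is precisely the one of~\cite{GVHKR} for the subsonic case, and it fails here.

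The reason is that, unlike the subsonic case, the leading boundary layer now carries a nontrivial \emph{normal velocity} component $U^0_3\neq 0$. Hence $\partial_{x_3}u_a$ (and likewise $\partial_{x_3}n_a$, $\partial_{x_3}\phi_a$) is of size $\delta/\varepsilon$ in an $\varepsilon$-layer near the boundary. When you linearize \eqref{EP} about $(n_a,u_a,\phi_a)$, the zero-order terms $\tilde u\cdot\nabla u_a$, $\tilde n\,\div u_a$, etc.\ therefore carry coefficients of order $1/\varepsilon$. A plain $L^2$ energy estimate gives $\frac{d}{dt}\|(\tilde n,\tilde u)\|_{L^2}^2 \lesssim \tfrac{1}{\varepsilon}\|(\tilde n,\tilde u)\|_{L^2}^2+\dots$, and the resulting $e^{T/\varepsilon}$ loss cannot be absorbed by any finite-order residual. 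The difficulty is not the term $\varepsilon^2\Delta$ you flag, but these singular interior coefficients.

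The paper overcomes this by Goodman-type weights $\eta(x_3)=\exp\bigl(\tfrac{\delta}{\mu^2}(1-e^{-\mu x_3/\varepsilon})\bigr)$, chosen so that the singular profiles $\tfrac{\delta}{\varepsilon}e^{-\gamma_0 x_3/\varepsilon}$ are dominated by $\mu\,\eta'$. Integration by parts against $\eta$ then converts convection into a new ``dissipative'' contribution $\int\eta'\,|(u_a+u)_3|\,|(\tilde n,\tilde u)|^2$ that absorbs the singular terms. One weighted estimate is not enough: three distinct functionals (the physical energy, one controlling $(\varepsilon\nabla\tilde n,\varepsilon\,\div\tilde u)$, and one controlling the full $\varepsilon\nabla\tilde u$) must be combined, and closing the loop requires the positivity of several explicit quadratic forms whose definiteness is \emph{equivalent} to the Bohm condition $(u^0_3)^2>T^i+1$ at the trace. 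The smallness of the tangential velocity $|u^0_{1,2}(t,y,0)|$ in \eqref{smallintro} is used in this stability step (to control a cross term involving $\eta'\,\varepsilon\partial_3\tilde\phi\,\varepsilon\nabla_{1,2}\tilde\phi$), not in the sheath construction. So contrary to your assessment, the sheath construction is the easier half; the weighted stability analysis is the main obstacle and occupies the bulk of the paper.
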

Note that the supersonic condition forbids $y \mapsto u^0_3(t,y,0)$ to vanish at infinity :  more precisely, it behaves asymptotically in $y$ like $w_{ref}$, which necessarily satisfies  $w_{ref} < -  \sqrt{ T^i+ 1 }$. 

The smallness condition \eqref{smallintro} is used in particular to  ensure the stability of a boundary layer. The study of this boundary layer, which we build beforehand in Theorem~\ref{deriv}, is crucial in the proof of Theorem \ref{theomain}, though it does not appear explicitly. We refer to Theorem \ref{main} and Corollary \ref{coro} for more precise statements.
 
 To prove Theorem~\ref{theomain} (and its refined version), we shall use a classical two-step argument  as in our previous work \cite{GVHKR}. The first step is a \emph{consistency} step where we first build approximate solutions for \eqref{EP} at a sufficiently high order, see Theorem~\ref{deriv}. In the second step we combine linear estimates and a continuation argument to deduce the \emph{stability} of those approximate solutions. The main differences compared to the analysis of \cite{GVHKR} are as follows.
 \begin{enumerate}
 \item Contrary to \cite{GVHKR}, the existence of the sequence of approximate solutions is not unconditional, and relies on the first part of the smallness assumption \eqref{smallintro}. Furthermore, at the main order, compared to the subsonic case treated in \cite{GVHKR}, there is a boundary layer for the third (i.e. the normal) component of the velocity.
 
 \item Because of this additional singular term, the linear estimates of \cite{GVHKR} are not relevant. Instead, we shall consider weighted estimates, inspired by the classical work of  Goodman  \cite{Goo}.  The idea is  to use the stabilizing effect of convection. We shall also borrow some ideas  from a recent paper of Nishibata, Ohnawa and Suzuki \cite{NOS} on a related problem. Namely, this problem is  the stability of a special solution of the unscaled Euler-Poisson system, that is when $\eps = 1$.  Roughly, this special solution corresponds to the main order part of the boundary layer constructed in Theorem~\ref{deriv}, but without any $y$- or $\eps$-dependence.  On that topic,  one can also refer to the previous work of Suzuki \cite{Suz}, as well as the works of  Ambroso, M\'ehats and Raviart \cite{AMR}, and Ambroso \cite{AMB}).
 
Our analysis will combine three types of  $L^2$ weighted estimates. In \cite{GVHKR}, only the ``physical'' unweighted  energy estimate was needed. This was due to the fact
 that the physical energy  which is well adapted to symmetrize the  singular term coming from the Poisson equation in the quasineutral limit was compatible with
  the structure of the boundary layers. Here this is not the case and we have to use the  the stabilizing effect of convection via the weights and other energy functionals   in order to control the new
   singular terms that arise.
 \end{enumerate}

\subsection{About the case $-\sqrt{T^i+1}<u_3(0,y,0) < -\sqrt{T^i}$}
\label{noBL}
We shall explain in this very brief section how the intermediate case $-\sqrt{T^i+1}<u_3(0,y,0) < -\sqrt{T^i}$ can be handled. In this regime, no boundary condition can be imposed when $\eps > 0$, but one boundary condition can (and actually must) be imposed when $\eps = 0$, that is for system \eqref{EI}. We can notably endow \eqref{EI} with the boundary condition   $n\vert_{x_3 = 0} = e^{-\phi_b}$. This additional boundary condition allows to satisfy the Dirichlet condition $\phi\vert_{x_3 = 0} = \phi_b$ in the limit $\eps \rightarrow 0$, hence no boundary layer appears. Convergence of solutions of \eqref{EP} to the solution of \eqref{EI} is in this context straightforward to prove: one can build an accurate approximate solution of \eqref{EP}, whose first term is the solution of \eqref{EI}. As explained before, this approximate solution will not have any boundary layer part. Then, one can perform an energy estimate between the approximate and exact solutions of \eqref{EP} (with the same initial data) to show convergence. In this way, we get

  \begin{proposition}
\label{theopasmain}
Let $(n^{0}, u^{0})$  a solution to \eqref{EI} with the condition $n\vert_{x_3 = 0} = e^{-\phi_b}$, such that
$(n^0- n_{ref}, u^0-u_{ref})\in C^0([0,T],H^s(\R^3_{+}))$ 
 with $s$ large enough. We assume that 
 $$ \sup_{[0, T]}  \sup_{y \in \mathbb{R}^2}\big(u^0(t,y, 0 )_{3}  + \sqrt{ T^i } \big) < 0, \quad \inf_{[0, T]}  \inf_{y \in \mathbb{R}^2}\big(u^0(t,y, 0 )_{3}  + \sqrt{ T^i+ 1 } \big) > 0.$$
   Then, there is $\e_0>0$ such that for any $\e \in (0,\e_0]$, there exists $(n^\e,u^\e)$ a solution to \eqref{EP} also defined on $[0,T]$ such that
 \begin{equation*}
  \begin{aligned}
&\sup_{[0,T]}\left( \| n^\e- n^0 \|_{L^2(\R^3_+)}  +   \| u^\e- u^0 \|_{L^2(\R^3_+)}\right) \rightarrow_{\eps \rightarrow 0} 0, \\
 & \sup_{[0,T]}\left( \| n^\e- n^0 \|_{L^\infty(\R^3_+)}  +   \| u^\e- u^0 \|_{L^\infty(\R^3_+)}\right) \rightarrow_{\eps \rightarrow 0} 0. \Black
  \end{aligned}
  \end{equation*}
  Furthermore, the rate of convergence is $O({\e})$. 
    \end{proposition}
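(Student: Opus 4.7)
The plan is to follow the two-step consistency/stability strategy alluded to in the introduction. The decisive simplification compared to Theorem~\ref{theomain} is that, in the intermediate regime, the limit Euler system \eqref{EI} is now supplemented with the boundary condition $n|_{x_3=0} = e^{-\phi_b}$: setting $\phi^0 := -\ln n^0$, the neutrality relation automatically yields $\phi^0|_{x_3=0} = \phi_b$, so the Dirichlet condition for the potential is matched at leading order and no boundary layer needs to be introduced.

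For the consistency step I would build a regular WKB expansion
\[
n^\e_{app} = n^0 + \sum_{k=1}^{N} \e^{2k} n^{2k}, \quad u^\e_{app} = u^0 + \sum_{k=1}^{N} \e^{2k} u^{2k}, \quad \phi^\e_{app} = \phi^0 + \sum_{k=1}^{N} \e^{2k} \phi^{2k},
\]
only even powers of $\e$ appearing since $\e$ enters \eqref{EP} solely through $\e^2 \Delta \phi$. Inserting into \eqref{EP} and identifying powers of $\e$, at each order $k \geq 1$ one obtains: (i) the isothermal Euler system \eqref{EI} linearised around $(n^0,u^0)$ for $(n^{2k}, u^{2k})$, with forcing depending on lower-order correctors; (ii) an algebraic relation obtained by expanding $e^{-\phi}$ around $\phi^0$, which determines $\phi^{2k}$ in terms of $n^{2k}$, $\Delta\phi^{2(k-1)}$ and lower-order correctors. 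The Dirichlet condition $\phi^{2k}|_{x_3=0} = 0$ then translates, via (ii), into a single Dirichlet condition on $n^{2k}|_{x_3=0}$; this is precisely what the linearised Euler system requires in the intermediate regime $-\sqrt{T^i+1} < u_3^0 < -\sqrt{T^i}$, where exactly one characteristic is incoming at $\{x_3=0\}$. A straightforward induction yields smooth correctors, and with initial data chosen equal to $(n^\e_{app}, u^\e_{app})(0,\cdot)$ the residual of $(n^\e_{app}, u^\e_{app}, \phi^\e_{app})$ in \eqref{EP} is $O(\e^{2N+2})$ in any fixed Sobolev norm.

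For the stability step I would set $(\tilde n, \tilde u, \tilde \phi) := (n^\e - n^\e_{app}, u^\e - u^\e_{app}, \phi^\e - \phi^\e_{app})$. By continuity, $u_3^\e < -\sqrt{T^i}$ near $\{x_3=0\}$ for $\e$ small, so \eqref{EP} requires no boundary condition on $(n^\e, u^\e)$ and $\tilde\phi|_{x_3=0} = 0$. One then runs the unweighted ``physical'' $L^2$ energy estimate of \cite{GVHKR}: the hyperbolic boundary terms of the form $\int_{\{x_3=0\}} |u_3^\e|\,|\tilde n|^2$ and their analogues for $\tilde u$ are non-negative and can be discarded, while the only Poisson boundary contribution, $\e^2 \int_{\{x_3=0\}} \pa_{x_3}\tilde\phi\,\tilde\phi$, vanishes thanks to the homogeneous Dirichlet condition on $\tilde\phi$. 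Higher-order Sobolev estimates are obtained by commuting derivatives and using the equation to trade normal for tangential derivatives, and a standard continuation argument yields existence on the full interval $[0,T]$ together with the $O(\e)$ convergence rate; the $L^\infty$ bound follows by Sobolev embedding.

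The main obstacle is the careful bookkeeping of boundary conditions at every order in the WKB construction, where the intermediate-regime characteristic structure is crucially used to absorb the Dirichlet condition for $\phi^{2k}$ into a well-posed condition for $n^{2k}$. Once the expansion is in hand, the stability step is considerably simpler than in Theorem~\ref{theomain}: the absence of boundary layers means that no singular-in-$\e$ source term appears in the residual and the weighted energies inspired by Goodman \cite{Goo} are not needed, which also accounts for the improved $O(\e)$ rate compared with the $O(\sqrt{\e})$ of Theorem~\ref{theomain}.
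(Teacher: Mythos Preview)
Your proposal is correct and follows the same two-step strategy the paper only sketches in Section~\ref{noBL}: a regular (boundary-layer-free) approximate solution followed by an unweighted energy estimate of the type used in \cite{GVHKR}. You in fact supply more detail than the paper does, notably the observation that only even powers of $\eps$ enter the expansion and the careful matching of the single Dirichlet condition $\phi^{2k}|_{x_3=0}=0$ with the single incoming characteristic of the linearised isothermal Euler system in the intermediate regime; the paper simply asserts that ``this approximate solution will not have any boundary layer part'' and that the energy estimate is then straightforward. One minor remark: your even-power expansion should actually yield an $O(\eps^2)$ rate rather than the $O(\eps)$ stated in the proposition, so the paper's claim is slightly conservative.
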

Note that we get convergence in $L^\infty$ due to the absence of boundary layers. In the setting of Theorem \ref{theomain}, boundary layers have to be included
 in order to describe the asymptotic behavior in $L^\infty$ (see the refined version  of the convergence in Corollary  \ref{coro}).

\subsection{Overview and classification}
Summarizing the results obtained in \cite{GVHKR} and in the present paper, we obtain the following classification of outflow boundary conditions for the study of the quasineutral limit of the Euler-Poisson equation.

\begin{center}

\begin{tabular}{|c|c|c|c|}
   \hline
   Boundary condition & Boundary condition &  Main order & Stability result\\
    for Euler-Poisson \eqref{EP} & for isothermal Euler \eqref{EI} & boundary layer & \\  ($\eps>0$) & ($\eps=0$)   & &\\
   \hline
$u\cdot n =0$ $\&$ $\phi=\phi_b$ & $u\cdot n =0$ & Density & \cite[Theorem 2.1]{GVHKR} \\(characteristic) &  & and Potential &   \\
\hline
 $u\cdot n =\overline{u}$, $\overline{u}<0$ $\&$ $\phi=\phi_b$  &  & &  \\ $|\overline{u}| < \sqrt{T^i}$  & $u\cdot n =\overline{u}$ & Density  & \cite[Section 5.2]{GVHKR}  \\(non-characteristic)  &  & and Potential &  \\
 \hline
$\phi=\phi_b$  &   & &    \\ and initial datum s.t. &  $n = e^{-\phi_b}$ & No boundary layer  & Proposition \ref{theopasmain} \\  $-\sqrt{T^i+1}<u_3(0,y,0) < -\sqrt{T^i}$   & & & \\(non-characteristic)     &  & &  \\
 \hline
$\phi=\phi_b$  &  &  & \\ and initial datum s.t. & None & Density, Potential  &  Theorem  \ref{theomain}    \\ $u_3(0,y,0) < -\sqrt{T^i+1}$  &   & and Velocity   &  (under smallness\\(non-characteristic)        & &    & assumption \eqref{smallintro})\\
\hline
\end{tabular}

\end{center}

\bigskip

The rest of the paper is now entirely devoted to the proof of Theorem~\ref{theomain} (more precisely  of the refined Theorem \ref{main}).

  \section{Derivation of the boundary layers}
  We construct in this section accurate approximate solutions of the Euler Poisson system, of boundary layer type. They are expansions in powers of $\eps$, of the form: 
 \begin{equation} \label{ansatz}
\begin{aligned}
(n_{a},u_{a}, \phi_{a})    & \: = \:  \sum_{i=0}^K \eps^i \left( n^i(t,x), u^i(t,x), \phi^i(t,x) \right) \\  & \: + \: \sum_{i=0}^K \eps^i \left( N^i\left(t,y,\frac{x_3}{\eps}\right), U^i\left(t,y,\frac{x_3}{\eps}\right), \Phi^i\left(t,y,\frac{x_3}{\eps}\right)\right), 
\end{aligned}
\end{equation}
with $K$ an arbitrarily large integer. These approximations split into two parts:
\begin{itemize}
\item a regular part, with  coefficients $(n^i, u^i, \phi^i)$  depending on $(t,x)$. This regular part models the macroscopic behaviour of the solutions.
\item a singular part, with coefficients $(N^i, U^i, \Phi^i)$ depending on the regular variables $t,y$, but also on a rescaled variable $z = \frac{x_3}{\eps} \in \R_+$. It models a boundary layer correction, of  size $\eps$ near the boundary. Accordingly, we shall impose
\begin{equation} \label{decay}
 (N^i, U^i, \Phi^i) \rightarrow 0, \quad \mbox{  as }  \: z \rightarrow +\infty.
 \end{equation}
 \end{itemize}
 In order for  the whole approximation $\phi_a$ to satisfy the proper Dirichlet condition, we shall  further impose
\begin{equation} \label{dirichlet}
 \phi^0(t,y,0) \: + \: \Phi^0(t,y,0) \: = \:  \phi_b, \quad   \phi^i(t,y,0) \: + \: \Phi^i(t,y,0) \: = \:  0   \quad \mbox{for all } i \ge 1.       
\end{equation}
Note that, far away from the boundary,  the term $(n^0, u^0, \phi^0)$  will drive the dynamics of these approximate solutions, and will satisfy the quasineutral limit system: 
\begin{equation} \label{E}
\left\{
\begin{aligned}
 & \pa_t n \: + \:  \div(n u) \:  = \:  0, \\
 & \pa_t u  \: + \:  u \cdot \na u  \: + \: (T^i+1) \na \ln(n)= 0,  
\end{aligned}
\right. 
\end{equation}
together with the relation $n^0 = e^{-\phi^0}$.  
\bigskip

Our construction is summarized in the
\begin{theorem}
\label{deriv}
 There exists $\delta_0 > 0$ such that: for any $K \in \N^*$, for any $\dis (n^0_0,u^0_0)$ such that $ \inf n^0_0 > 0$  satisfying
 the regularity assumption
\begin{equation} \label{initialregularity}
(n^0_{0}- n_{ref}, u^0_{0} - u_{ref}) \in H^\infty(\R^3_+), 
\end{equation} 
the Bohm condition
\begin{equation} \label{supersonic}
 \sup u^0_{0,3}  < 0, \quad (\sup u^0_{0,3}))^2  >T^i + 1,
\end{equation}
and the smallness condition
   \begin{equation} \label{smallness}
      \phi_c \: +  \: \inf_{y \in \R^2}   \ln\left(\frac{n^0_0(y,0)}{n_{ref}}\right)   \: \ge \: - \delta_0, \quad    \phi_c \: + \: \sup_{y \in \R^2}   \ln\left(\frac{n^0_0(y,0)}{n_{ref}}\right)    \: \le \:  \delta_0
\end{equation}
one can find $T>0$ and an expansion of type \eqref{ansatz}  with the following properties: 

\medskip
\noindent
{\bf i)} $(n^0,u^0)$ is a  solution to  \eqref{E} with initial data $(n^0_0,u^0_0) $, satisfying 
$$ (n^0- n_{ref}, u^0 - u_{ref}) \in C^\infty([0,T],H^\infty(\R^3_+)). $$ 
Moreover,  $\phi^0=- \log n^0$.  

\medskip
\noindent
{\bf ii)}  $\forall \, 1\leq i \leq K$, $(n^i, u^i, \phi^i)  \in  C^\infty([0,T], H^{\infty}(\R^3_+)).$

\medskip
\noindent
{\bf iii)} $\forall \, 0\leq i \leq K, \, (N^i,U^i,\Phi^i) \in    C^\infty([0,T], H^\infty_{y,z}(\R^3_+))$  and decays together with its  derivatives  uniformly exponentially in $z$.
  
\medskip
\noindent
{\bf iv)} Let us consider $(n^\eps, u^\eps, \phi^\eps)$ a solution to \eqref{EP} and define:
\begin{equation*}
n  \: = \:  n^\eps \:  - n_a , \quad u  \: = \: u^\eps  \: -  u_a , \quad \phi  \: = \:  \phi^\eps \: - \phi_a.
\end{equation*}
Then $(n,u,\phi)$ satisfies the system of equations:
\begin{equation} \label{approxEP}
\left\{
\begin{aligned}
& \pa_t  n +  (u_a + u ) \cdot \nabla n + n \,\div (u + u_{a})    + \div  (n_{a} u)  = \eps^{K} R_n,  \\
& \pa_t u + (u_a + u)  \cdot \na u + u \cdot \nabla u_{a}  + T^i   \left(\frac{\na n}{n_a +  n } - \frac{\na n_a}{n_a} \left( \frac{n}{n_a +  n} \right) \right) =  \na \phi + \eps^{K}R_u, \\
& \eps^2 \Delta \phi = n - e^{-\phi_a}( e^{ - \phi} - 1 \big) + \eps^{K+1} R_\phi.
\end{aligned}
\right.
\end{equation}
where $R_n,R_u,R_\phi$ are remainders satisfying:
\begin{equation}
\label{estirestes}
\sup_{[0,T]}\| \nabla_x^\a R_{n,u,\phi} \|_{L^2(\R^3_+)} \leq C_\a \eps^{-\a_3}, \quad \forall \a=(\a_1,\a_2,\a_3)\in\N^3, \quad |\a|\leq m,
\end{equation}
with $C_\a>0$ independent of $\e$.
\end{theorem}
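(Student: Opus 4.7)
The plan is to substitute the ansatz \eqref{ansatz} into \eqref{EP}, Taylor-expand the regular profiles around $x_3=0$ wherever they enter through the boundary-layer variable, and then separate, order by order in $\eps$, the contributions that are independent of $z$ from those that vanish at $z=+\infty$. This produces, at each order, a coupled PDE system in $(t,x)$ for the regular profile $(n^i,u^i,\phi^i)$ and an ODE system in $z$ (parametrized by $(t,y)$) for the singular profile $(N^i,U^i,\Phi^i)$; the two are coupled only through traces at $x_3=0$ and through the Dirichlet relations \eqref{dirichlet}.

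At order $\eps^0$, the regular equations force $\phi^0=-\ln n^0$ and reduce the momentum equation to \eqref{E}; classical symmetric-hyperbolic theory gives a local $H^\infty$ solution with $n^0>0$ on some $[0,T]$, establishing item (i). The heart of the construction is the leading-order boundary layer. Writing $\tilde n = n^0_b+N^0$ and $\tilde u_3 = u^0_{3,b}+U^0_3$, where the subscript $b$ denotes evaluation at $x_3=0$, the order-$\eps^{-1}$ parts of the continuity and third-momentum equations integrate once in $z$, using the decay at infinity, into the algebraic relations
\begin{equation*}
\tilde n\,\tilde u_3 \: = \: n^0_b\, u^0_{3,b}, \qquad \tfrac{1}{2}\tilde u_3^{\,2} + T^i \ln \tilde n - \Phi^0 \: = \: \tfrac{1}{2}(u^0_{3,b})^2 + T^i \ln n^0_b,
\end{equation*}
while the tangential momenta give $\tilde u_3\,\partial_z U^0_{1,2}=0$, hence $U^0_{1,2}\equiv 0$ by the decay and the Bohm condition, and the Poisson equation becomes $\partial_z^2 \Phi^0 = \tilde n - n^0_b e^{-\Phi^0}$. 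Eliminating $\tilde n$ and $\tilde u_3$ yields a scalar autonomous ODE for $\Phi^0(z)$ with first integral $\tfrac{1}{2}(\partial_z \Phi^0)^2 = V(\Phi^0)$, for a potential $V$ smooth in $(t,y)$ with $V(0)=V'(0)=0$ and
\begin{equation*}
V''(0) \: = \: n^0_b\,\frac{1+T^i-(u^0_{3,b})^2}{T^i-(u^0_{3,b})^2}.
\end{equation*}
Under the Bohm condition \eqref{supersonic} both numerator and denominator are negative, so $V''(0)>0$: the origin is a hyperbolic saddle, and on each branch of its stable manifold a unique exponentially decaying trajectory exists for each initial value in a small enough neighbourhood of $0$. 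The smallness assumption \eqref{smallness} ensures that the prescribed Dirichlet trace $\Phi^0(0)=\phi_b-\phi^0_b$ lies in that neighbourhood and, simultaneously, in the region where the map $\tilde n\mapsto \Phi^0$ is monotone (so that the algebraic elimination above is legitimate). Smoothness in $(t,y)$ follows from the implicit function theorem applied to the Hamiltonian. I expect this step to be the main obstacle: unlike the subsonic analysis of \cite{GVHKR}, one must construct a non-trivial velocity boundary layer $U^0_3$, and the well-posedness of the sheath ODE rests simultaneously on the saddle structure coming from Bohm and on the monotonicity of the elimination controlled by smallness.

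Higher orders are linear: for $i\ge 1$, the regular part satisfies a linear symmetric-hyperbolic system obtained by linearizing \eqref{E} around $(n^0,u^0)$, with sources built from previously constructed profiles; iterating in $H^\infty$ gives item (ii). The singular part satisfies a linear ODE in $z$ whose principal part is the linearization of the leading sheath ODE at $\Phi^0$; the exponential dichotomy inherited from the saddle still selects a unique decaying solution once \eqref{dirichlet} prescribes the value at $z=0$, and this induction on $i$ completes item (iii). Finally, for item (iv) one plugs $(n_a,u_a,\phi_a)$ into \eqref{EP}, collects the terms that have been cancelled by construction, and identifies $R_n,R_u,R_\phi$ as the leftover contributions, which start at order $\eps^K$ (resp. $\eps^{K+1}$ for Poisson); each $\partial_{x_3}$ hitting a boundary-layer profile produces a factor $\eps^{-1}$, which is exactly the origin of the $\eps^{-\alpha_3}$ loss in \eqref{estirestes}.
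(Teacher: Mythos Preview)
Your outline matches the paper's proof: the same formal cascade, the same Hamiltonian reduction of the leading sheath ODE to $(\partial_z\Phi^0)^2=2\mathcal V(\Phi^0)$ with $\mathcal V''(0)>0$ under Bohm, and the same linear iteration for the interior and boundary-layer profiles at orders $i\ge1$.

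The one place where your account is looser than the paper's concerns the precise role of the two inequalities in \eqref{smallness}. In the paper, the \emph{lower} bound alone suffices for $\Phi^0$: the phase-plane analysis of $\Phi''=\mathcal X(\Phi)$ yields a unique monotone orbit connecting any $\Phi^0(0)\in[-\delta_0,+\infty)$ to $0$. The \emph{upper} bound is used only at orders $i\ge1$, where the linearized boundary-layer equation reads $\partial_z^2\Phi^i=\mathcal X'(\Phi^0(z))\,\Phi^i+F^i$. Because $\mathcal X'(\Phi)<0$ for large $\Phi$, asymptotic exponential dichotomy at $z=+\infty$ does not by itself guarantee well-posedness of this two-point boundary value problem; the paper instead uses the upper bound to force $\mathcal X'(\Phi^0(z))\ge\alpha>0$ uniformly in $z$, which makes the problem coercive and solvable by Lax--Milgram. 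Your ``dichotomy inherited from the saddle'' heuristic is pointing in the right direction, but the uniform positivity of $\mathcal X'(\Phi^0)$ is the concrete mechanism you should name, and it is exactly what the second half of \eqref{smallness} buys.
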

  
The whole section is devoted to the proof of this theorem. As already said in the introduction, the main difference with the boundary layer problem analyzed in \cite{GVHKR} is that the third component of $U^0$ does not vanish: the velocity field has a boundary layer part of amplitude $O(1)$. This modifies all the boundary layer equations, compared to \cite{GVHKR}. In particular, well-posedness is not unconditional anymore, which explains our condition \eqref{smallness}. In fact, the construction of the first boundary layer term only requires a lower bound (first inequality in \eqref{smallness}), whereas the construction of the next terms requires an additional upper bound. Moreover,   the constant $\delta_0$ in this condition can be made explicit:   see  paragraph \ref{subsecwp}.  
Let us finally point out that the regularity assumption on the initial data $(n^0_0, u^0_0)$ can be easily lowered to $H^s$, with $s$ large enough.

\subsection{The cascade of equations}
Plugging \eqref{ansatz} in the Euler-Poisson  system \eqref{EP}, we formally derive a whole set of equations. The well-posedness of  such equations will be discussed in the next paragraph. For clarity of exposure, for any function $f=f(t,x)$, we denote by    
 $\Gamma f$ the function  $(t,y,z) \mapsto  f(t,y,0)$.

\medskip
a) Away from the boundary, we find as expected that $(n^0,u^0)$ satisfies \eqref{E}, plus  the neutrality relation $\phi^0 = - \ln (n^0)$. The local existence and uniqueness of $(n^0,u^0)$, starting from  our supersonic data, will be stated rigorously in the next paragraph.
 
\medskip
b) In the boundary layer, with a Taylor expansion of the regular part of \eqref{ansatz}, the third line of \eqref{EP} yields
\begin{equation} \label{Phi0}
\pa^2_z \Phi^0 = (\Gamma n^0 + N^0) - e^{-(\Gamma \phi^0 + \Phi^0)} = \Gamma n^0 \left( \frac{\Gamma n^0 + N^0}{\Gamma n^0} - e^{-\Phi^0} \right),
\end{equation}
whereas the first line yields
$$ \pa_z \left( (\Gamma n^0 + N^0) (\Gamma u^0_3 + U^0_3 ) \right) = 0.$$
Taking into account \eqref{decay}, we obtain
\begin{equation} \label{N0U03}
 (\Gamma n^0 + N^0) (\Gamma u^0_3 + U^0_3 ) \: = \:  \Gamma n^0 \, \Gamma u^0_3. 
 \end{equation}
 We then consider the vertical component of the momentum equation in \eqref{EP}. This gives
 $$ (\Gamma u^0_3 + U^0_3 ) \pa_z   U^0_3  + T^i \pa_z \ln(\Gamma n^0 + N^0)  \:= \: \pa_z \Phi^0. $$
 We can integrate in $z$, and use \eqref{decay} again to obtain
 $$ \frac{1}{2}  (\Gamma u^0_3 + U^0_3 )^2 + T^i  \ln(\Gamma n^0 + N^0) = \Phi^0 +   \frac{1}{2}  (\Gamma u^0_3)^2 + T^i  \ln(\Gamma n^0). $$
 Inserting relation \eqref{N0U03} in the last identity,  we end up with 
 \begin{equation}
  \frac{(\Gamma n^0 \, \Gamma u^0_3)^2}{2 (\Gamma n^0 + N^0)^2} \:  + \: T^i   \ln(\Gamma n^0 + N^0)  \: = \:  \Phi^0  \:+ \:   \frac{1}{2}  (\Gamma u^0_3)^2 + + T^i  \ln(\Gamma n^0).
  \end{equation}
  This last equation can be written 
\begin{equation}  \label{Phi0N0}
  \Phi^0(t,y,z) = \mathcal{F}\left(t,y, \frac{ \Gamma n^0 (t,y)  + N^0(t,y,z)}{\Gamma n^0 (t,y)} \right)
 \end{equation}
 where 
\begin{equation} \label{defF} 
 \mathcal{F}(t,y,N) =  \frac{\Gamma u^0_3(t,y)^2}{2 N^2} \: + \: T^i \ln(N) \: - \:   \frac{\Gamma u^0_3(t,y)^2}{2}. 
\end{equation}
The derivative with respect to $N$ is 
$$\pa_N  \mathcal{F}(t,y,N) = -  \frac{\Gamma u^0_3(t,y)^2}{ N^3} \: + \: \frac{T^i}{N}.  $$
One has
$$ \pa_N  \mathcal{F}(t,y,N) < 0 \quad \mbox{over }  \: ]0, N_ \mathcal{F}(t,y)[, \quad  N_ \mathcal{F}(t,y) :=   \sqrt{\Gamma u^0_3(t,y)^2/T^i}.   $$
Hence, for all $t,y$,   $\: N \mapsto   \mathcal{F}(t,y,N)$  has a smooth inverse, and its inverse is decreasing from $\dis ]\Phi_ \mathcal{F}(t,y),  +\infty[$ to $\dis ]0, N_ \mathcal{F}(t,y)[$, where 
$\dis \Phi_ \mathcal{F}(t,y) \: := \:  \mathcal{F}(t,y,N_ \mathcal{F}(t,y) < 0$.   {\em We  make a slight abuse of notation, and denote $\dis  \mathcal{F}^{-1}(t,y,\Phi)$ such inverse.} 

\medskip
Back to  \eqref{Phi0}, we obtain 
\begin{equation} \label{Phi0BL1}
 \pa^2_z \Phi^0(t,y,z) = \mathcal{X}(t,y, \Phi^0(t,y,z))  
\end{equation}
where 
\begin{equation} \label{Phi0BL2}
\mathcal{X}(t,y,\Phi) \: := \: n^0(t,y,0) \left(  \mathcal{F}^{-1}(t,y,\Phi) - e^{-\Phi} \right).
\end{equation}
This is an autonomous ODE in $z$, $(t,y)$ playing the role of parameters. Of course, the r.h.s. is only defined as long as $ \mathcal{F}^{-1}$ is.   The ODE  is completed by boundary conditions:
\begin{equation} \label{Phi0BL3}
\Phi^0(t,y,0) = \phi_{ref} + \phi_c(t,y)  - \phi^0(t,y,0) = \phi_c(t,y) + \ln\left(\frac{n^0(t,y,0)}{n_{ref}}\right), \quad \lim_{z\rightarrow +\infty} \Phi^0(t,y,z) = 0. 
\end{equation}  
We shall prove in the next paragraph existence and uniqueness of a solution to this system, under a condition of type \eqref{smallness}.

\begin{remark}
\label{remarkamplitude1}
From  \eqref{Phi0BL3}, we shall set
$$ \sup_{t \in {0, T]}, \, y \in \mathbb{R}^2}|\Phi^0(t,y,0)|= |\phi_b  +  \ln  n^0(t,y,0)|=: \delta.$$
This parameter $\delta$ measures how the quasineutral solution is far from satisfying the Dirichlet condition.
 It will determine the size of the boundary layer.
\end{remark}

\medskip
Once $\Phi^0$ has been determined, we obtain $N^0$ using the relation 
$$ \frac{n^0(t,y,0) + N^0(t,y,z)}{n^0(t,y,0)} =  \mathcal{F}^{-1}(t,y,\Phi^0(t,y,z)).  $$
In turn,  \eqref{N0U03} gives $U^0_3$. 

\medskip
Finally, we use the horizontal components of the momentum equation, that yield
$$ (\Gamma u^0_3 + U^0_3) \pa_z  U^0_y \: = \:  0 $$
which together with decay entails that 
$ U^0_y = 0$.
This concludes the formal derivation of  $(n^0,u^0,\phi^0)$, and $(N^0,U^0, \Phi^0)$. 

\bigskip
c) From there, one can derive $\dis (n^1, u^1, \phi^1)$, and  $\dis (N^1,U^1, \Phi^1)$. More generally, knowing  $(n^k, u^k, \phi^k)$, and  $\dis (N^k,U^k, \Phi^k)$, $k \le i-1$, one can derive equations on $(n^i, u^i, \phi^i)$, and then on $(N^i,U^i, \Phi^i)$. We shall stick to $i=1$ for the sake of brevity, and refer to \cite{GVHKR} for a full derivation in a close context. 
 
 \medskip
 Clearly, 
 \begin{equation} \label{u1n1}
 \left\{
 \begin{aligned}
& \pa_t n^1 \: + \: \div(n^0 u^1) \: + \: \div(n^1 u^0) \: = \: 0, \\
& \pa_t u^1 + u^1 \cdot \na u^0 + u^0 \cdot \na u^1 + T^i \left( \frac{\na n^1}{n^0} - \frac{n^1 \na n^0}{(n^0)^2} \right) \:  = \:  \na \phi^1, \\
& - e^{-\phi^0} \phi^1 = n^1. 
\end{aligned}
\right.
\end{equation}
The well-posedness of this system over $(0,T)$ (where $T$ is the time up to which the supersonic boundary condition on $u^0$ is satisfied) will be reminded in the next paragraph. 

\medskip
As regards the boundary layer terms, we look as before at the Poisson equation of \eqref{EP}, which leads to
\begin{equation} \label{Phi1}
\pa_z^2 \Phi^1 = (z \Gamma \pa_3 n^0 +  \Gamma n^1 + N^1) + e^{-(\Gamma \phi^0 + \Phi^0)} (z \Gamma \pa_3 \phi^0 + \Gamma \phi^1 + \Phi^1).
\end{equation}
Then, the mass equation  yields (remember that $U^0_y = 0$)
\begin{multline*} 
\pa_z ((\Gamma n^1 + N^1) (\Gamma u^0_3 + U^0_3)) + \pa_z \left( ( \Gamma n^0 + N^0) (\Gamma u^1_3 + U^1_3) \right) 
= F_1, \\
F_1:=  -\pa_z (z \Gamma \pa_3 n^0 U^0_3 -  N^0 z \Gamma \pa_3 u^0_3)  - {\rm div}_y( N^0 \Gamma u^0_y) - \pa_t N^0. 
\end{multline*}
Integration from $z$ to infinity  yields
\begin{multline} \label{N1U13}
(\Gamma n^1 + N^1) (\Gamma u^0_3 + U^0_3) +  (\Gamma n^0 + N^0) (\Gamma u^1_3 + U^1_3) \\
  = \int_{+\infty}^z F_1  +  \Gamma n^1\Gamma u^0_3 + \Gamma n^0\Gamma u^1_3 =: F_2.
\end{multline}
Then, we write down $\eps^0$ terms in the vertical component of the momentum equation: 
\begin{multline*} \pa_z  (( \Gamma u^0_3 + U^0_3) (\Gamma u^1_3 + U^1_3)) \: + \: T^i \pa_z \frac{\Gamma n^1 + N^1}{\Gamma n^0 + N^0} 
 = \pa_z \Phi^1 + F_3, \\
F_3:=  - \pa_z (z \Gamma \pa_3 u^0_3 U^0_3) - T^i \pa_z \left( \frac{z \pa_3 n^0}{\Gamma n^0 + N^0} - \frac{z \pa_3 n^0}{\Gamma n^0} \right)  -  \Gamma u^0_y \cdot \na_y  U^0_3  -  \pa_t U^0_3. 
\end{multline*}
One can as before integrate with respect to $z$, to get
\begin{multline*}
(\Gamma u^0_3 + U^0_3) (\Gamma u^1_3 + U^1_3) \:  + \:  T^i \frac{\Gamma n^1 + N^1}{\Gamma n^0 + N^0}   = \Phi^1 - \int_{+\infty}^z  F_3
\: + \:  \Gamma u^0_3 \Gamma u^1_3 \: + \:  T^i \frac{\Gamma n^1}{\Gamma n^0} =: \Phi^1 + F_4. 
\end{multline*}
Using \eqref{N0U03} and \eqref{N1U13} to transform the first term at the l.h.s., we end up with 
\begin{equation*}
 \left( -\frac{(\Gamma u^0_3 \Gamma n^0)^2}{(\Gamma n^0 + N^0)^3} + \frac{T^i}{\Gamma n^0 + N^0} \right)(\Gamma n^1 + N^1) = \Phi^1 + F_5
 \end{equation*}
 where 
 $$ F_5 \: :=  \: + \Black  \frac{\Gamma n^0 \Gamma u^0_3}{(\Gamma n^0 + N^0)^2} F_2 + F_4 $$
 is  known from the previous steps of the construction.
 This expression allows to express $\Gamma n^1 + N^1$ in terms of $\Phi^1$, and to go back to \eqref{Phi1} to have a closed equation on $\Phi^1$. A tedious but straightforward calculation shows then that 
\begin{equation} \label{Phi1BL}
\pa^2_z \Phi^1(t,y,z) \: = \: \pa_\Phi \mathcal{X}(t,y,\Phi^0(t,y,z)) \Phi^1(t,y,z) \: + \: F_6(t,y,z) 
 \end{equation}
 with 
 \begin{multline*}
 F_6(t,y,z) \: := \:    n^0(t,y,0) \left[ \pa_N  \mathcal{F}\left(t,y, \frac{n^0(t,y,0) + N^0(t,y,z)}{n^0(t,y,0)}\right) \right]^{-1} F_5(t,y,z) \\
  + \: n^0(t,y,0) (e^{-\Phi^0} - 1) z \pa_3 \phi^0(t,y,0) + n^0(t,y,0) \phi^1(t,y,0).
 \end{multline*}
This equation is completed with the boundary condition 
\begin{equation} \label{Phi1BL2}
\Phi^1(t,y,0) = - \phi^1(t,y,0), \quad \lim_{z\rightarrow +\infty} \Phi^1(t,y,z) = 0. 
\end{equation}  
 The well-posedness of this equation will be discussed in the next section. 
 
 \medskip
 As soon as $\Phi^1$ is determined, the expression of $N^1$ follows, and from \eqref{N1U13}, we get the expression of $U^1_3$. The horizontal part $U^1_y$ is obtained as for $U^0_y$ by considering the horizontal components of the momentum equation in \eqref{EP}. We leave all details to the reader.  As mentioned before, the next order terms in the expansion satisfy similar problems, and we omit their construction for the sake of brevity. 
 
 \subsection{Well-posedness} \label{subsecwp}
We discuss here the well-posedness of the reduced models derived formally in the previous section. 

\medskip
a) The supersonic condition \eqref{supersonic} expresses that all characteristics of the Euler system \eqref{E}
 are outgoing, so that no boundary condition is necessary for solvability. More precisely, starting from the initial data satisfying \eqref{supersonic}-\eqref{initialregularity},  there exists a small time $T > 0$ and a smooth solution \cite{BenzoniSerre}
$$ n^0 \in  n_{ref} + C^\infty([0,T], H^\infty(\R^3_+)), \quad u^0 \in u_{ref}  + C^\infty([0,T], H^\infty(\R^3_+)). $$
Moreover, one can assume that the supersonic condition is satisfied globally over $[0,T]$: 
\begin{equation} \label{supersonicbis}
 \inf n^0 > 0, \quad  \sup u^0_3  < 0, \quad ( \sup u^0_3)^2 >  T^i + 1.
 \end{equation}
 Under this last condition, one can solve the equations on $(n^i, u^i, \phi^i)$, $i \ge 1$,  over $[0,T]$. We remind that these equations are  of the type 
 \begin{equation*} 
 \left\{
 \begin{aligned}
& \pa_t n^i \: + \: \div(n^0 u^i) \: + \: \div(n^i u^0) \: = \: f^i_n, \\
& \pa_t u^i + u^i \cdot \na u^0 + u^0 \cdot \na u^i + T^i \left( \frac{\na n^i}{n^0} - \frac{n^i \na n^0}{(n^0)^2} \right) \:  = \:  \na \phi^i \: + \:  f^i_u, \\
& - e^{-\phi^0} \phi^i = n^i \: + \:  f^i_\phi  
\end{aligned}
\right.
\end{equation*}
(see \eqref{u1n1} in the special case $i=1$). These are linear hyperbolic  systems with unknowns $(n^i, u^i)$, and with outgoing characteristics over $[0,T]$ by \eqref{supersonicbis}. Starting (for instance) from zero initial data   
$$ n^i\vert_{t=0} = 0, \quad u^i\vert_{t=0} = 0, $$
these systems are shown inductively to possess unique regular solutions over $[0,T]$. More precisely, 
one can check  inductively that all source terms $f^i_{n,u,\phi}$ are in $C^\infty([0,T]; H^\infty(\R^3_+))$, so that 
$$   n^i \in  C^\infty([0,T], H^\infty(\R^3_+)), \quad u^i \in  C^\infty([0,T], H^\infty(\R^3_+)), $$
and the same for $\phi^i$. 

\medskip
b) We now have to address rigorously the construction of boundary layer systems. Note that the $N^i$'s and $U^i$'s are given in an extrinsic manner from the $\Phi^i$'s, so that we only need to focus on the latter.  We begin by  discussing  the well-posedness of \eqref{Phi0BL1}-\eqref{Phi0BL2}-\eqref{Phi0BL3}.  We already noticed that $t,y$ are just parameters,  which makes it  an ODE problem. This ODE problem has been  addressed in \cite{AMR}, see also \cite{Suz}.  Omitting the dependence with respect to $t,y$, it reduces to the search of a trajectory $z \mapsto \Phi(z)$ of a second-order   ODE  
\begin{equation} \label{EDO}
  \Phi'' = \mathcal{X}(\Phi),  \quad \mathcal{X}(\Phi) =  \mathcal{F}^{-1}(\Phi) - e^{-\Phi}, \quad  \mathcal{F}(N) =   \frac{u^0_3(0)^2}{2 N^2} \: + \: T^i \ln(N) \: - \:   \frac{u^0_3(0)^2}{2}. 
\end{equation}
such that $\Phi(0) = \Phi_0$, $\: \lim_{z \rightarrow 0} \Phi = 0$ for some constant $\Phi_0$. 
Note that we only allow solutions $\Phi$ with values in  some interval $]\Phi_ \mathcal{F}, +\infty[$, $\Phi_ \mathcal{F} < 0$,  in order for $ \mathcal{F}^{-1}$ and $\mathcal{X}$ to be defined:  see the discussion below  \eqref{defF}. 

\medskip
This problem can be solved using the hamiltonian structure of the equation. Introducing the antiderivative $\mathcal{V} = \int_0^\Phi  \mathcal{X}$, we get  
$$  (\Phi')^2 =    2 \mathcal{V}(\Phi). $$
The variations of $\mathcal{V}$ can be determined using that $\mathcal{V}'(\Phi) = \mathcal{X}(\Phi)$ has the same sign as $\mathcal{Y}(N) = N - e^{ -\mathcal{F}(N)}$, which in turn has the same sign as $\ln N +  \mathcal{F}(N)$. We refer to \cite{Suz} for more details.  As a result, there exists $\delta_0 > 0$ such that: {\em for all $\Phi_0 \in [-\delta_0, +\infty[$, there is a unique  (monotonic) solution of \eqref{EDO} connecting $\Phi_0$ to $0$}. Moreover,  as ${\mathcal V}''(0) > 0$,   $(0,0)$ is a hyperbolic point for the $2\times2$ system associated to \eqref{EDO}, which yields the exponential decay  of $\Phi$ by the stable manifold theorem.  More precisely, the rate of decay is given by 
\begin{equation}
\label{decayBL}
 \gamma:=\sqrt{\mathcal{V}''(0)} = \sqrt{\frac{T^i + 1 - u^0_3(0)^2}{T^i  - u^0_3(0)^2}}.
\end{equation}

\medskip
Back to the original problem (with dependence in $t,y$), this analysis provides a solution of \eqref{Phi0BL1}-\eqref{Phi0BL2}-\eqref{Phi0BL3} under the first inequality in  \eqref{smallness}. The regularity   of $\Phi^0$ with respect to $(t,y)$ follows from standard arguments: using  (for instance) the implicit function theorem, one can show that
$$ \mathcal{X}(t,y,\phi) = \widetilde{\mathcal{X}}\left(n^0(t,y,0), u^0_3(t,y), \Phi\right), $$
and then
$$ \Phi^0(t,y,z) =  \widetilde{\Phi^0}\left(n^0(t,y,0), u^0_3(t,y), \phi_c(t,y),z\right) $$
for smooth functions
$$\widetilde{\mathcal{X}} = \widetilde{\mathcal{X}}(n,w,\Phi), \:  \: \widetilde{\Phi^0} = \widetilde{\Phi^0}(n,w,\phi, z).$$
 Moreover,  one can check that $\widetilde{\Phi^0}(n_{ref},\cdot) = 0$. The smoothness in $(t,y)$ and $H^\infty$ integrability in $y$ follow.   
For the sake of brevity, we leave the details to the reader. 

\begin{remark}
\label{remarkamplitude2}
Note that the decay rate \eqref{decayBL} is independent of the amplitude parameter defined in Remark \ref{remarkamplitude1}.
We thus get for $\Phi^0$ an estimate under the form: there exists $C>0$ , $\gamma_{0}>0$ such that for every $\delta \in (0, \delta_{0})$,  we have the estimate
$$ |\Phi^0| \leq  C\delta e^{- \gamma_{0} z}.$$ 
\end{remark}

\medskip
As regards the next order boundary layer terms, they all (formally) satisfy equations of the type 
$$ \pa^2_z \Phi^i(t,y,z) \: = \: \pa_\Phi \mathcal{X}(t,y,\Phi^0(t,y,z)) \Phi^i(t,y,z) \: + \: F^i $$
 (see \eqref{Phi1BL} for $i=1$), where $F^i$ depends on the lower order profiles.  Their well-posedness (in the space of smooth functions,  exponentially decaying in $z$) is shown inductively: freezing $t,y$, it reduces to show the  well-posedness of  a linear ODE of the type 
$$  \pa^2_z \Phi \: = \: \mathcal{X}'(\Phi^0) \Phi  \: + \: F, $$
for some smooth exponentially decaying $F$, with boundary conditions $\Phi\vert_{z=0} = \psi, \: \Phi\vert_{z=\infty} = 0$. Note that, up to consider $\tilde \Phi(z) = \Phi(z) - \psi \chi(z) $, with $\chi \in C^\infty_c(\R_+)$ satisfying $\chi(0)=0$, we can always assume that $\psi = 0$.  

\medskip
As  $\mathcal{X}'(\Phi) < 0$   for large $\Phi$, the well-posedness of these systems is not clear under a mere lower bound on $\Phi^0$. But as $ \mathcal{X}' (0) > 0$, up to take a smaller $\delta_0$ and impose the additional upper bound in \eqref{smallness}, we ensure that $\mathcal{X}'(\phi^0) \ge \alpha > 0$ uniformly in $z$.  The existence of a variational solution (say in $H^1_0(\R_+)$)  follows then from Lax-Milgram's lemma. Smoothness in $z$ is a consequence of standard elliptic regularity results, whereas the exponential decrease follows again from the stable manifold theorem. 

\medskip
The proof of Theorem \ref{deriv} is thus complete. 
%
%

  \section{Linear Stability}
  
  In this section, we establish $L^2$ type estimates for linear systems of the following form:
  \begin{equation} \label{linear}
\left\{
\begin{aligned}
& \pa_t  \np \: + \:   (u_{a} + u )\cdot \nabla \np \: + \:  (n_a + n) \nabla \cdot \up   \: = \:  r_n,  \\
& \pa_t \up \: + \:  (u_a + u)  \cdot \na \up  \: + \:   T^i   \frac{\na \np}{n_a + n} \:  = \:  \na \fp + r_u, \\
& \eps^2 \Delta \fp \: = \:  \np \:  +  \: e^{-\phi_a} \, \fp \,  (  1 + h(\phi) ) \:  + \:  r_\phi,
\end{aligned}
\right.
\end{equation}
 where $r = (r_n, r_u,r_\phi)$ is a given source term, and where $h \in \{h_0, h_1\}$, where
 \begin{equation*}
h_0(\phi) \: := \:  -{ e^{-\phi} - 1 + \phi \over \phi}, \quad h_1(\phi) \: := \:  e^{-\phi}- 1,
\end{equation*}
  {\it cf} \eqref{h0h1}. 
We add to the system  the boundary condition
\begin{equation}
\label{BCfp}
\fp\vert_{x_3=0} = 0.
\end{equation}
  
   To prove our main linear stability estimate, we shall use 
   Goodman type weights in order to use the stabilizing effect of convection in the problem.
   From the construction of the approximate solution in the previous section, in particular, from Remarks \ref{remarkamplitude1}
    and \ref{remarkamplitude2}, we know that the main  boundary layer part of the approximate solution satisfies
     an estimate under the form 
   \begin{equation}
\label{boundna0}
  \sup_{(0,T) \times \R^2} | \pa_{x_3} (n_{a},u_{a},\phi_{a})(\cdot, x_3) |  +   \eps | \pa^2_{x_3} (n_{a},u_{a},\phi_{a})(\cdot, x_3) |\le \frac{\delta}{\eps} e^{-\frac{\gamma_{0} \, x_3}{\eps}},
\end{equation}
where $\delta$ will be assumed small whereas $\gamma_{0}$ is fixed.   We recall that as in the previous section, for any function $f=f(t,x)$, we denote by    
 $\Gamma f$ the function  $(t,y,z) \mapsto  f(t,y,0)$. We shall also assume that the trace on the boundary of the tangential velocity  of the limit system 
   is small, i.e.
    \begin{equation}
    \label{hypvitan}
    | \Gamma u^0_{1, 2} | \leq \delta.
    \end{equation}

  We   introduce the \emph{weight function} 
  $$\eta(x_3) := e^{ {\delta \over \mu^2} ( 1- e^{-\mu x_3/\eps}) }, $$ where $\mu, \gamma>0$ are some fixed parameters, to be specified later. Observe that $\eta$ satisfies the following properties:
  \begin{equation}
  \label{systeta}
  \left\{
  \begin{array}{ll}
  \eta(0)=1, \\
  \eta'(x_3) = {\delta \over \mu \eps}e^{-\mu x_3/\eps} \, \eta(x_3).
  \end{array}
  \right.
  \end{equation}
  
  The main  idea is that the parameter  $\mu>0$,   $\mu \leq \gamma_{0}$ will be chosen small enough.  We  also assume that  the parameter   $\delta$ that measures the strength of the boundary layers  is sufficiently small  such that $\delta/\mu^2$ is also small. This yields in particular
   \begin{equation}
   \label{estimpoids}
    {1 \over 2 } \leq \eta(x_{3}) \leq 2 , \quad x_{3} \eta' \leq 4, \quad \forall x_{3} \geq 0.
   \end{equation}
  For example, we can choose $\mu = \delta^{1 \over 4}$. The assumptions on $\mu$ further imply 
 \begin{equation}
\label{betagrand}
\eta(x_3) \frac{C_a \delta}{\eps} e^{-\gamma_{0} \, x_3/\eps} \leq  C_{a}\,  \mu\, \eta'(x_3), \quad \forall \, x_3>0,
\end{equation}
This inequality will be crucial in many estimates of the paper. We also note for later purposes that
\begin{equation} \label{eta''}
|\eta''| = \left| \frac{\delta}{\mu \eps} e^{-\mu x_3/\eps} \eta'(x_3) + \frac{\mu}{\eps} \eta'(x_3) \right| \: \le \:   C \frac{\mu}{\eps} \eta'(x_3)
\end{equation}

We shall eventually assume that the forcing terms $ (r_{n}, r_{u}, r_{\phi})$ can be split into a  small singular part  and a regular part:
\begin{equation}
\label{hypsource}
| (r_{n}, r_{u}, {r_{\phi } \over \eps}, \eps \nabla r_{n}, \eps \nabla r_{u}, \partial_{t} r_{\phi}) | \leq \mu \eta' R^s + R^r.
\end{equation}
Note that since the derivative of $\eta$ is of order $1/\eps$, the first term in the r.h.s. of \eqref{hypsource} is singular in $\eps$. Concretely, the linearized system \eqref{linear} will be obtained by taking $\eps$-derivatives of \eqref{approxEP}. The remainders will contain commutators, and satisfy \eqref{hypsource}.

  The crucial  weighted $L^2$ estimate is given in the following proposition.
  
  \begin{proposition}
\label{propL2}
 Let $(n_{a}, u_{a},\phi_{a})$ be the approximate solution constructed in Theorem \ref{deriv}
  and   consider some smooth $(n, u, \phi)$ on $[0, T]$ such that for some $M>0$
  \begin{multline}
  \label{hypmin}
 n_{a} + n \geq  1/M, \quad   e^{-\phi_{a}}(1 + h(\phi)) \geq  1/M, \\  {\|(n, u, \phi)\|_{L^\infty} \over \eps}+ \|\nabla (n,u, \phi) \|_{L^\infty}+  \|\partial_{t}(n, \phi)\|_{L^{\infty}} \leq M ,  \,  \forall t \in [0,T], \, x \in \mathbb{R}^3_{+}
  \end{multline} 
  and such that the Bohm condition is verified on the boundary
  \begin{equation}
  \label{hypbohm}
  \Gamma (u_{a} + u)_{3} < - {1 \over M}, \quad\big( \Gamma(u_{a} + u)_{3} \big)^2 \geq T^i+ 1 + {1 \over M}.
  \end{equation}
  Moreover, let us assume that the source term $(r_{n}, r_{u}, r_{\phi})$ of \eqref{linear} verifies the assumption \eqref{hypsource}.
  Then,  there exists $\delta_{0}>0$, $\mu_{0}>0$  and  $ C( C_{a}, M )$    ($C_{a}$ only depends on the approximate solution)  such that for every $\eps \in (0, \eps_{0}]$, 
    for  every $\delta \in (0, \delta_{0}]$ and for every $\mu \in (0, \mu_{0}]$ with $\delta/\mu^2$ sufficiently small, we have   for the solution
    $(\np, \up , \fp)$ of \eqref{linear}
    the estimate
 \begin{align*}
 & \sqrt{\mu} \Big( \big\| \big( \np,\up, \fp, \eps \nabla \np, \eps \nabla \up,\eps \na \fp \big) \big\|_{L^\infty_{T}L^2(\R^3_{+})}^2
  +  \|\sqrt{\eta'}( \np, \up, \fp, \eps \nabla \np, \eps \nabla \up, \eps \nabla \fp)\|_{L^2_{T}L^2(\R^3_{+})}^2  \\
  & \quad \quad \quad \quad \quad+ 
   \|\Gamma( \np, \up, \fp, \eps \nabla \np, \eps \nabla \up, \eps \partial_{3} \fp)\|_{L^2_{T}L^2(\R^2)}^2  \Big)
   \\
 & \quad \quad  \leq   C(C_{a}, M)\Big( \big\|\big( \np_0,\up_0, \fp_0, \eps \nabla \np_0, \eps \nabla \up_0, \eps \nabla \up_0,\eps \na \fp_0) \big\|_{L^2(\R^3_{+})}^2  + \|(\np, \up, \eps \nabla \np, \eps \nabla \up , \fp, \eps \nabla \fp)\|_{L^2_{T}L^2(\mathbb{R}^3_{+})}^2    \\
& \quad \quad \quad \quad \quad \quad  \quad \quad \quad+ \big\|   R^r\big\|_{L^2_{T}L^2(\R^3_{+})}^2
 + \mu \| \sqrt{\eta'} R^s \|_{L^2_{T}L^2(\R^3_{+})}^2 \Big).
  \end{align*}

\end{proposition}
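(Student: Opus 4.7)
\bigskip

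\textbf{Proof plan.} The argument rests on three weighted $L^2$ energy identities that exploit, respectively, the symmetric structure of the hydrodynamic block, the Poisson coupling, and the regularizing effect of convection through the Goodman-type weight $\eta$. The decisive mechanisms are: (a)~when $(u_a+u)_3<0$ near the boundary, integration by parts against $\eta$ produces a positive bulk term proportional to $-(u_a+u)_3 \eta'$ that controls the $\sqrt{\eta'}$-weighted norms; (b)~the Bohm condition \eqref{hypbohm} guarantees that the matrix of boundary quadratic forms collected at $x_3=0$ is positive-definite, yielding the $\Gamma$-trace control on the left-hand side; (c)~by \eqref{betagrand}, any commutator term of size $\delta\epsilon^{-1}e^{-\gamma_0 x_3/\epsilon}$ arising from derivatives of the boundary layer part of $(n_a,u_a,\phi_a)$ is dominated by $\mu\eta'$ and hence absorbed into the Goodman term.

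\medskip

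\textbf{Step 1 (symmetrized hydrodynamic estimate).} I multiply the continuity equation in \eqref{linear} by $T^i\eta\,\np/(n_a+n)$, the momentum equation by $\eta(n_a+n)\up$, and sum. The pressure/density coupling symmetrizes into $T^i\,\mathrm{div}(\np\,\up)\,\eta$. Integration by parts in $x_3$ generates: a time derivative of the natural energy density $\tfrac12\eta\bigl[T^i\np^2/(n_a+n)+(n_a+n)|\up|^2\bigr]$; the Goodman bulk term $-\tfrac12(u_a+u)_3\eta'\bigl[T^i\np^2/(n_a+n)+(n_a+n)|\up|^2\bigr]$, which is nonnegative thanks to the sign of $(u_a+u)_3$ inherited from the supersonic/outgoing assumption; boundary traces at $x_3=0$ with factor $-\tfrac12(u_a+u)_3$; a residual coupling $-\int (n_a+n)\up\cdot\nabla\fp\,\eta$; and commutator-type terms involving $\partial_{x_3}(n_a,u_a)$ that are bounded by \eqref{boundna0} and absorbed by the Goodman term via \eqref{betagrand}. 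The source contribution $\int(T^i\np\, r_n/(n_a+n)+(n_a+n)\up\cdot r_u)\eta$ splits according to \eqref{hypsource} into a piece controlled by $\mu\|\sqrt{\eta'}R^s\|^2$ plus $\|R^r\|^2$.

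\medskip

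\textbf{Step 2 (Poisson block and closure of the coupling).} The residual $-\int(n_a+n)\up\cdot\nabla\fp\,\eta$ is treated by testing the Poisson equation by $\eta\,\partial_t\fp$: using $\phi|_{x_3=0}=0$ and integration by parts in time and in $x_3$, this produces $\epsilon^2\tfrac{d}{dt}\!\int|\nabla\fp|^2\eta$ (up to lower-order terms), the term $\int\eta\,\np\,\partial_t\fp$, and the weight correction $-\epsilon^2\!\int\eta'\partial_3\fp\,\partial_t\fp$. Differentiating $\epsilon^2\Delta\fp$ in time and using the continuity equation for $\np$ exchanges $\int\eta\,\np\,\partial_t\fp$ for $-\int(n_a+n)\up\cdot\nabla\fp\,\eta$ plus controlled remainders, thereby closing the coupling. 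The $e^{-\phi_a}(1+h(\phi))\ge 1/M$ assumption provides a positive $\int e^{-\phi_a}(1+h)\fp^2\eta$ contribution. The weight-correction term is handled using \eqref{eta''} and the smallness of $\mu$.

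\medskip

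\textbf{Step 3 (boundary traces and Bohm).} All boundary integrals from Steps~1--2 assemble at $x_3=0$ (where $\eta=1$) into a quadratic form in $\Gamma(\np,\up,\fp,\epsilon\partial_3\fp)$ with matrix controlled by $(u_a+u)_3$ and $T^i$. The strict Bohm inequality $\bigl(\Gamma(u_a+u)_3\bigr)^2\ge T^i+1+1/M$ makes this quadratic form uniformly positive (this is essentially the stability computation of \cite{NOS}): this gives the $\|\Gamma(\np,\up,\fp,\ldots)\|_{L^2_TL^2(\mathbb R^2)}^2$ term on the left-hand side.

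\medskip

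\textbf{Step 4 (derivatives and conclusion).} I then apply $\epsilon\nabla$ to \eqref{linear}, redo Steps 1--3 for $(\epsilon\nabla\np,\epsilon\nabla\up,\epsilon\nabla\fp)$. The commutators $[\epsilon\nabla,(u_a+u)\cdot\nabla]$, $[\epsilon\nabla,T^i/(n_a+n)]$, etc., contain factors $\epsilon\partial(n_a,u_a,\phi_a)$ bounded by $\delta e^{-\gamma_0 x_3/\epsilon}$ via \eqref{boundna0}, and \eqref{betagrand} absorbs them into $\mu\eta'$-weighted terms. The splitting \eqref{hypsource} is precisely calibrated so that $\epsilon\nabla r_{n,u}$ and $\partial_t r_\phi$ obey the same decomposition. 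Summing the four estimates with a small prefactor $\sqrt{\mu}$ in front of the derivative estimates (so that the cross-terms coming from the absorbed commutators remain small), using \eqref{estimpoids} to pass between $\eta$-weighted and unweighted $L^2$ norms, and closing by Gronwall on $[0,T]$ produces the advertised inequality.

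\medskip

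\textbf{Main obstacle.} The delicate point is Step 2: the ``physical'' symmetrizer that diagonalizes the singular Poisson coupling interacts badly with the Goodman weight, because testing Poisson by $\partial_t\fp\,\eta$ produces the boundary/interior weight corrections $\epsilon^2\eta'\partial_3\fp\,\partial_t\fp$ which are a priori of the same order as the main energy flux. Balancing these against the positive contribution of the Bohm trace and the Goodman bulk term — while simultaneously keeping the coupling term $\int(n_a+n)\up\cdot\nabla\fp\,\eta$ controlled — is the crux, and is what forces the three-fold structure of the energy functional alluded to in the introduction.
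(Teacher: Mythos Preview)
Your plan correctly identifies the three mechanisms at work (Goodman weight for bulk absorption, Bohm condition for trace positivity, commutator absorption via \eqref{betagrand}), and your final paragraph even names the real difficulty. But the execution in Steps~1--4 has a genuine gap: it implements only a \emph{two}-layer structure (a zeroth-order estimate, then the same estimate repeated after applying $\eps\nabla$), whereas the paper needs \emph{three} genuinely different treatments of the coupling term $\mathcal I=\int\eta\,(n_a+n)\,\up\cdot\nabla\fp$.

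Concretely, your Step~2 is where the plan breaks. You claim the weight-correction term $\eps^2\!\int\eta'\,\partial_3\fp\,\partial_t\fp$ is ``handled using \eqref{eta''} and the smallness of $\mu$''. It is not: \eqref{eta''} only bounds $\eta''$, not this product. In the paper (Lemma~\ref{lemmafp}) this term is controlled by an \emph{elliptic} estimate on $\partial_t\fp$ from the time-differentiated Poisson equation, which feeds in $\|\sqrt{\eta'}\,\eps\partial_t\np\|$; the mass equation then converts this to $\|\sqrt{\eta'}(\eps\nabla\np,\eps\,\mathrm{div}\,\up)\|$. That quantity sits on the right-hand side of the ``physical'' energy (Estimate~A, \eqref{Afinal}) and cannot be absorbed by A itself. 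Your Step~4, which simply redoes Steps~1--3 after applying $\eps\nabla$, would reproduce the same deficit one derivative higher: you would need $\|\sqrt{\eta'}(\eps^2\nabla^2\np,\eps^2\nabla\mathrm{div}\,\up)\|$, and the argument never closes.

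The missing ingredient is the paper's Estimate~B (\eqref{Bfinal}): one treats $\mathcal I$ differently, writing $\fp=\tfrac{e^{\phi_a}}{1+h}[\eps^2\Delta\fp-\np-r_\phi]$ via Poisson and then taking the \emph{divergence} of the momentum equation to obtain a transport equation for $\eps\,\mathrm{div}\,\up$. This produces genuine control of $\|\sqrt{\eta'}(\eps\nabla\np,\eps\,\mathrm{div}\,\up)\|$ and leaves only $\mu\|\sqrt{\eta'}\eps\nabla\up\|^2$ (full gradient, but with the small prefactor $\mu$) on the right. A third estimate~C (\eqref{Cfinal}) on the full $\eps\nabla\up$---in which $\mathcal I$ is handled by yet a \emph{third} route, cancelling against the term $L=\int\eta\,\eps^2\Delta\fp\,\mathrm{div}\,\up$ coming from the derivative estimate---then closes the loop. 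The final combination is $B+\epsilon A$ (small $\epsilon$), then $+\sqrt{\mu}\,C$; the $\sqrt{\mu}$ is what survives on the left of the final inequality. Your outline conflates B and C with a single ``derivative'' step, and that is why it cannot close.
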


Here $L^2_T$ and $L^\infty_T$ stand for $L^2([0,T])$ and $L^\infty([0,T])$. This whole section is dedicated to the proof of Proposition~\ref{propL2}.

\begin{proof}[Proof of Proposition~\ref{propL2}] We first gather some useful bounds satisfied by the approximate solution $(n_a,u_a,\phi_a)$.
In the proof, we shall denote by $C_{a}$ and $C(C_{a}, M)$  numbers which depends only on the estimates of the approximate solution and on
 the number $M$ defined in \eqref{hypmin}  and that may change from line to line. The important thing is  that they are  uniformly bounded for
 $\eps \in (0,1]$,  $\delta \in (0, 1)$ and  $T \in (0, T_{0}]$ where $T_{0}$ is the interval of time on which the approximate solution is defined.
We first have, by construction (see Theorem~\ref{deriv}):
\begin{equation} \label{boundnuf}
 \sup_{(0,T) \times \R^3_+}  | \na_{x_1,x_2,t}^k(n_{a},u_{a},\phi_{a}) | \le C_a, \quad C_a > 0,
 \end{equation}
 with $C_{a}$ independent of $\eps$.
 In the other hand, we have for all $x_3>0$:
 \begin{equation}
\label{boundna}
  \sup_{(0,T) \times \R^2} |  \eps^l\pa_{x_3}^{1+l}  \nabla_{x_{1}, x_{2}, t}^k(n_{a},u_{a},\phi_{a})(\cdot, x_3) | \le \frac{\delta}{\eps} e^{-\frac{\gamma_{0} \, x_3}{\eps}} + C_{a},
\end{equation}
where we recall that  $0<\delta \ll 1$ can be considered as a small  parameter.
 \bigskip

\bigskip

We shall combine many energy estimates for the proof of Proposition \ref{propL2}. As already pointed out in the introduction, our approach shares features with  the analysis led in \cite{NOS}.  The starting point of all the energy estimates will be the following lemma:
 \begin{lemma}
 \label{lem+}
 Under the assumptions of Proposition \ref{propL2}, we have the estimate
 \begin{multline*} {1 \over 2 }{d \over dt} \int_{\mathbb{R}^3_{+}} \eta \left( (n_{a}+ n ) {|\up|^2 \over 2 }  + { |\np|^2 \over  n_{a}+ n} \right)\, dx 
  +{1 \over 2} \int_{\mathbb{R}^3_{+}}  \eta'\, Q^0(\np, \up) \, dx + { 1 \over 2 } \int_{x_{3}= 0} Q^0(\np, \up) - \I \\
   \leq C(C_{a}, M)\Big( \|(\np, \up )\|_{L^2}^2 +  \|R^r \|_{L^2}^2   + \mu \int_{\R^3_{+}}
    \eta' | R^s|^2\Big)
   \end{multline*}
   where the quadratic form $Q^0$ is associated to the  symmetric matrix 
     $$\mathcal{Q}^0 \: : = \: \left( \begin{array}{cc} T^i \Gamma( {|(u_{a} + u)_{3}|\over n^a+ n}) & -T^i  e^\top\\  -T^{i} e & \Gamma\big( (n_{a}+ n)  |(u_{a}+ u)_{3}| \big) \mbox{Id}_{3}\end{array} \right), \quad e^\top= (0,0, 1)^\top$$
   and
\begin{equation} \label{calI}
 \I \: := \: \int_{\R_{3}^+} \eta \nabla \fp\cdot (n_{a}+ n) \up \, dx.
 \end{equation}
 \end{lemma}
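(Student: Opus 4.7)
The plan is to derive this identity by a standard weighted $L^2$ energy estimate tailored to reveal the quadratic form $Q^0$. I would multiply the density equation of \eqref{linear} by $\eta T^i \np/(n_a+n)$, take the scalar product of the momentum equation with $\eta(n_a+n)\up$, then sum and integrate over $\R^3_+$; the overall factor $\frac{1}{2}$ in the statement corresponds to a global normalization of these multipliers. Collecting the time-derivative contributions will produce $\frac{1}{2}\frac{d}{dt}$ of the energy on the left-hand side, up to correction terms proportional to $\pa_t(n_a+n)$; by \eqref{boundnuf} and the $L^\infty$ bound on $\pa_t n$ in \eqref{hypmin}, these corrections are bounded by $C(C_a,M)\|(\np,\up)\|_{L^2}^2$.

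For the convection contributions, I would recast each as $\eta$ times a coefficient times $(u_a+u)\cdot\na(\cdot)^2$ and integrate by parts in $x$. Each then produces three pieces: a bulk term weighted by $\eta'$ (positive by the Bohm condition \eqref{hypbohm} since $(u_a+u)_3<0$, yielding the diagonal $(\np,\np)$ and $(\up,\up)$ entries of $Q^0$), a boundary term at $x_3=0$ (again of definite sign by Bohm, yielding the same entries at the boundary), and a bulk remainder involving divergences of $(u_a+u)/(n_a+n)$ or $(n_a+n)(u_a+u)$. The cross contributions from $T^i\int\eta\up\cdot\na\np$ and $T^i\int\eta\np\,\na\cdot\up$ combine to $T^i\int\eta\,\na\cdot(\np\up)$, which after integration by parts yields $-T^i\int\eta'\np\up_3-T^i\int_{x_3=0}\np\up_3$, matching exactly the off-diagonal entries of $\mathcal{Q}^0$. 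Combining with the diagonal pieces above produces the asserted $\frac{1}{2}\int\eta' Q^0$ and $\frac{1}{2}\int_{x_3=0}Q^0$ terms. The electric-force contribution $\int\eta(n_a+n)\up\cdot\na\fp$ is by definition $\I$, and migrates to the left-hand side as $-\I$; crucially, since we do not integrate it by parts, the Dirichlet condition \eqref{BCfp} is not needed at this stage.

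The source contributions will be handled by Cauchy--Schwarz using the split \eqref{hypsource}: the $R^r$ part will give $C(\|R^r\|_{L^2}^2+\|(\np,\up)\|_{L^2}^2)$, and the $\mu\eta'R^s$ part will give $C\mu\|\sqrt{\eta'}R^s\|_{L^2}^2$ plus a small multiple of $\int\eta'|(\np,\up)|^2$ absorbable into $\frac{1}{2}\int\eta'Q^0$ once $\mu$ is small. The hard part will be controlling the lower-order bulk remainders (from the convection step and from the $\pa_t(n_a+n)$ correction) that contain derivatives of the boundary-layer part of $(n_a,u_a)$: these are $O(\delta/\eps)$ by \eqref{boundna0} and therefore formally singular as $\eps\to 0$. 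The essential tool will be the weight inequality \eqref{betagrand}, $\eta\cdot(\delta/\eps)e^{-\gamma_0 x_3/\eps}\leq C\mu\eta'$, which lets us majorize any such contribution by $C\mu\int\eta'|(\np,\up)|^2$ and then absorb it into the positive $\frac{1}{2}\int\eta'Q^0$ term on the left provided $\mu$ (and hence $\delta$) is sufficiently small. Assembling all of these pieces yields precisely the inequality claimed in the lemma.
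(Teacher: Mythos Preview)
Your proposal is correct and follows essentially the same approach as the paper. The only point you gloss over is how the coefficients in the $\eta'$-weighted integrals become the \emph{trace} values $\Gamma(\cdot)$ appearing in $\mathcal{Q}^0$: after your integrations by parts, the diagonal terms carry coefficients like $\frac{(u_a+u)_3}{n_a+n}(x)$ evaluated at a generic point, not at $x_3=0$. The paper handles this by writing each coefficient as its boundary trace plus an $O(\eps+\delta+x_3)$ remainder (using \eqref{app0}--\eqref{subtrace3}); since $\eps\eta'$ and $x_3\eta'$ are uniformly bounded by \eqref{estimpoids}, the remainder contributions land in $C(C_a,M)\|(\np,\up)\|_{L^2}^2$ or are $O(\delta)$-small and absorbed by the positive definiteness of $Q^0$. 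With that step added, your argument is complete.
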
 
 The main difficulty in proving Proposition \ref{propL2} will be to handle the term
 $\I$, that involves the potential $\fp$.
We note that in the left hand side of the above estimate, the quadratic form $Q^0$ is positive thanks to the Bohm condition \eqref{hypbohm}. 
\subsubsection*{Proof of Lemma \ref{lem+}}

First, multiplying the velocity equation by $(n_a + n) \, \up \, \eta$, and performing standard manipulations, we obtain:
\begin{align}
 \nonumber&  {{d}\over{dt}} \int_{\R^3_+}  \eta \, (n_a + n) \frac{|\up|^2}{2}  = \int_{\R^3_+} \eta \,  (n_a + n) \up \cdot \partial_t \up + \int_{\R^3_+} \eta \,  \pa_t (n_a + n) \frac{|\up|^2}{2} \\
&=  \I \: + \:  I_1 \: + \:  I_2  \: + \: \int_{\R^3_+}  \eta \,  \left( r_u \cdot ((n_a+n) \up) \right) + \int_{\R^3_+}  \eta \,  \pa_t (n_a + n) \frac{|\up|^2}{2},  
\end{align}
where 
$$ I_1 \: :=  \: -  T^i  \int_{\R^3_+} \eta \,   \frac{\na \np}{n_a + n}  \cdot ((n_a + n) \up), \quad 
 I_2 \: := \: - \int_{\R^3_+} \eta \,  [(u_a + u) \cdot \na \up] \cdot (n_a + n) \up. $$
We recall that $\I$ was defined in \eqref{calI}.  The last two  terms at the r.h.s. of \eqref{momentum} can be  easily estimated through \eqref{boundnuf}, 
 \eqref{hypmin}, \eqref{estimpoids} and \eqref{hypsource}. We find
\begin{multline}  \label{momentum}
{{d}\over{dt}} \int_{\R^3_+}  \eta \, (n_a + n) \frac{|\up|^2}{2}  \:  \le \:  \I \: + \:  I_1 \: + \:  I_2  \\
 + \:  C(C_{a}, M)\Big(  \|    R^r(t)\|_{L^2(\R^3_+)}^2 + \mu   \|  \sqrt{\eta'}  R^s(t)\|_{L^2(\R^3_+)}^2 +
  \| \up(t) \|_{L^2(\R^3_+)}  + \mu \| \sqrt{\eta'} \dot u \|_{L^2(\R^3_{+})}^2  \Big).
\end{multline}

 \medskip
 
 {\bf 1) Treatment of $I_1$.} Integrating by parts, we have the identity
\begin{equation*}
\frac{1}{T^i} I_1  \: = \: -   \int_{\R^3_+}  \eta \, \na \np \cdot \up = \int_{\R^3_+} \eta \, \np \, \div \up \: + \:  \int_{\R^3_+}  \eta'  \np \, \up_3 
 \: + \:   \int_{x_3= 0} \np \,  \up_3.
\end{equation*}
 To write the boundary term,  we have used that  $\eta (0)= 1$, see \eqref{systeta}. We can then use the evolution equation on $\np$ to express $\div \up$ in terms of $\np$.  We find 
\begin{align}
\nonumber  \frac{1}{T^i} I_1  \: & = \: - \int_{\R^3_+} \frac{\eta}{n_a +n} \left( \pa_t + (u_a + u) \cdot \na \right) \frac{|\np|^2}{2} + \int_{\R^3_+} \frac{\eta}{n_a + n} r_n \, \np \: \: + \:  \int_{\R^3_+}  \eta'  \np \, \up_3 \: + \:   \int_{x_3= 0} \np \,  \up_3  \\
\nonumber  & =   - \pa_t   \int_{\R^3_+}  \frac{\eta}{n_a +n}  \frac{|\np|^2}{2} \: + \: \int_{\R^3_+} \eta' \frac{(u_a + u)_3}{n_a + n}   \frac{|\np|^2}{2} \: + \: \int_{x_3 = 0} \frac{(u_a + u)_3}{n_a + n}   \frac{|\np|^2}{2} \\
\label{estimI1}   & \:  + \:  \int_{\R^3_+}  \eta'  \np \, \up_3 
 \: + \:   \int_{x_3= 0} \np \,  \up_3  \: + \: J_1 \:+ \:  J_2
\end{align}

  with 
\begin{equation*}
 J_1 \: := \: \int_{\R^3_+} \div \left( \frac{u_a + u}{n_a + n} \right)  \frac{|\np|^2}{2}, \quad
 J_2 \: := \:   \int_{\R^3_+}  \pa_t \left( \frac{1}{n_a + n} \right) \frac{|\np|^2}{2} \: + \: \int_{\R^3_+} \frac{\eta}{n_a + n} r_n \, \np. 
 \end{equation*}
 For the last term $J_2$,  we  use  again \eqref{boundnuf}, 
 \eqref{hypmin}, \eqref{estimpoids} and \eqref{hypsource} to get
\begin{equation} \label{estimJ2}
  J_2 \: \leq \: C(C_{a}, M) \Big( \| R^r(t)\|_{L^2}^2 +  \| \,\np(t)\|_{L^2}^2 + \mu \| \sqrt{\eta'} R^s(t) \|_{L^2}^2
 + \mu \| \sqrt{\eta'} \np(t) \|_{L^2}^2\Big).
 \end{equation}
To bound $J_1$, we use \eqref{hypmin} and \eqref{boundna0} to state
\begin{align}
\nonumber
J_1 & \: \le \: \int \eta \, C(C_a,M) \left(\frac{\delta}{\eps} e^{-\gamma_0 x_3/\eps} + 1 \right) \, |\np|^2 \\
\label{estimJ1} 
& \: \le \: C(C_a,M) \left( \mu \int_{\R^3_+} \eta' |\np|^2  \: + \: \| \np \|^2_{L^2} \right),
\end{align}
where we have used \eqref{betagrand} to go from the first to the second line.   We insert \eqref{estimJ2}-\eqref{estimJ1} in \eqref{estimI1} to obtain 
\begin{equation} \label{estimaI1}
\begin{aligned}
 \frac{1}{T^i} I_1  \: & \le \: - \pa_t   \int_{\R^3_+}  \frac{\eta}{n_a +n}  \frac{|\np|^2}{2} \: + \: \int_{\R^3_+} \eta' \frac{(u_a + u)_3}{n_a + n}   \frac{|\np|^2}{2} \: + \: \int_{x_3 = 0} \frac{(u_a + u)_3}{n_a + n}   \frac{|\np|^2}{2} \: + \:  \int_{\R^3_+}  \eta'  \np \, \up_3   \\
  &  \: + \:   \int_{x_3= 0} \np \,  \up_3 \: + \: C(C_a,M) \left(  \mu \int_{\R^3_+} \eta' |\np|^2 \: + \: \| \np \|^2_{L^2}  \: + \:   \| R^r(t)\|_{L^2}^2 + \mu \| \sqrt{\eta'} R^s(t) \|_{L^2}^2 \right).
 \end{aligned}
 \end{equation}

  {\bf 2) Treatment of $I_2$.} We write
\begin{align*}
I_2 &= - \int_{\R^3_+} \eta \, (n_a+n)(u_a + u) \cdot \na \frac{|\up|^2}{2} \\ 
&= \int_{\R^3_+}  \eta \, \div( (n_a+n)(u_a + u)) \frac{|\up|^2}{2} +  \int_{\R^3_+}  \eta' \,   (n_a+n)(u_a + u)_3 \frac{|\up|^2}{2} +  \int_{x_3=0} (n_a+n)(u_a + u)_3 \frac{|\up|^2}{2}.
\end{align*}
Relying once again on \eqref{boundna0}-\eqref{boundnuf}, and \eqref{betagrand}, we infer that:
\begin{equation}
\label{estimI2}
\begin{aligned}
I_2 & \leq   \int_{\R^3_+}  \eta' \,   (n_a+n)(u_a + u)_3 \frac{|\up|^2}{2} +  \int_{x_3=0} (n_a+n)(u_a + u)_3 \frac{|\up|^2}{2} \\ 
& \: + C(C_a, M)  \left(\mu \int_{\R^3_+} \eta' |\up|^2   + \| \up \|_{L^2(\R^3_+)}^2 \right).
\end{aligned}
\end{equation}
\medskip
 
 {\bf Conclusion.}  Gathering \eqref{momentum}, \eqref{estimaI1} and \eqref{estimI2}, we obtain
 \begin{align*} 
 & {{d}\over{dt}} \int_{\R^3_+}  \eta \, (n_a + n) \frac{|\up|^2}{2} \: + \:    {{d}\over{dt}}  \int_{\R^3_+}  \eta  \frac{ T^{i} \np^2}{2 (n_a + n) } 
 \: \leq \:   \\
 & \: \leq \:  { {1 \over 2} T^i \int_{\R^3_+}  \eta' \, \frac{\np^2 \, ( u^a + u)_3  }{n_a + n}} + { T^i \int_{\R^3_+}  \eta'  \np \, \up_3} 
 +  \: \int_{\R^3_+}  \eta' \,   (n_a+n)(u_a + u)_3 \frac{|\up|^2}{2}   \\
& \: + \: T^i \int_{x_3 = 0} \frac{(u_a + u)_3}{n_a + n}   \frac{|\np|^2}{2} \: + \:  T^i\int_{x_3= 0} \np \,  \up_3     \: + \: \int_{x_3=0} (n_a+n)(u_a + u)_3 \frac{|\up|^2}{2} \\
& \: + \:   C(C_a, M) \Big(  \| ( \np, \dot u )\|_{L^2}^2 +   \|R^r \|_{L^2}^2  +  \mu \, 
    \int_{\R^3_{+}}\eta' \,  ({\np^2} + |\up|^2 + | R^s|^2) \Big).
 \end{align*}
 To conclude, it suffices to observe   that the only significant contribution in the terms involving $\eta'$ comes from the traces of the coefficients. Indeed, we have  that
 \begin{equation}
 \label{app0} u_a= u^0 + O( \eps + \delta), \, n_a = n^0+ O(\eps + \delta)
 \end{equation}
  and that
  \begin{equation}
  \label{subtrace} |(u^0, n^0)-  \Gamma (u^0, n^0) | \leq C_{a} x_{3}, \quad  |(u, n)-  \Gamma (u, n) | \leq C(M) x_{3}
  \end{equation}
   to obtain
 \begin{align}
 \label{subtrace2}   {(u^a + u)_{3} \over n_{a}+ n} & =   \Gamma ( {(u^0+u)_{3} \over n^0 + n }) + O(\delta + \eps + x_{3}) 
 = \Gamma ( {(u_{a}+u)_{3} \over n_{a}+ n }) + O(\delta + \eps + x_{3})  
 , \\
  \label{subtrace3}   (n_a+n)(u_a + u)_3  & =   \Gamma\big(
 (n^0 + n) (u^0 + u)_{3}\big) + O( \eps + \delta + x_{3}) \\
\nonumber  & =   \Gamma\big((n_{a} + n) (u_{a} + u)_{3}\big) + O( \eps + \delta + x_{3})
 .\end{align}
 Since $\eps \eta'$  and $x_{3} \eta'$ are uniformly bounded (see \eqref{estimpoids}), we end up with 
  \begin{multline*} {1 \over 2 }{d \over dt} \int_{\mathbb{R}^3_{+}} \eta \left( (n_{a}+ n ) {|\up|^2 \over 2 }  + { |\np|^2 \over  n_{a}+ n} \right)\, dx 
  +{1 \over 2} \int_{\mathbb{R}^3_{+}}  \eta'\, Q^0(\np, \up) \, dx + { 1 \over 2 } \int_{x_{3}= 0} Q^0(\np, \up) - \I \\
   \leq C(C_{a}, M)\Big( \|(\np, \up )\|_{L^2}^2 +  \|R^r \|_{L^2}^2   + \mu \int_{\R^3_{+}}
    \eta' | R^s|^2\Big) +   (\mu + \delta)  \,   \int_{\R^3_{+}}\eta' \,  ({\np^2} + |\up|^2). 
   \end{multline*}
   By the Bohm condition, the quadratic form $Q_0$ is positive definite : this allows to absorb the last term at the r.h.s. for $\mu$ and $\delta$ small enough. The estimate of Lemma \ref{lem+} follows.

  \bigskip
  
 {\bf A. The estimate for the weighted  physical energy}
 
 We now use Lemma \ref{lem+} in order to prove Proposition \ref{propL2}.
 The first estimate that we shall establish is the following
 \begin{multline}
\label{Afinal}
 \|(\np, \up, \fp, \eps \nabla \fp)\|_{L^\infty_{T}L^2(\mathbb{R}^3_{+})}^2 +   \| \sqrt{\eta'}(\np, \up, \fp, \eps \nabla \fp) \|_{L^2_{T} L^2(\mathbb{R}^3_{+})}^2 
  + \|\Gamma ( \np,  \up, \eps \partial_{x_{3}} \fp )\|_{L^2_{T}L^2(\mathbb{R}^2)}^2 \\
   \leq C(C_{a}, M) \Big( \|(\np, \up, \fp, \eps \nabla \fp)(0)\|_{L^2(\mathbb{R}_{+}^3)}^2 +  \| \sqrt{ \eta'} \,( \eps\, \nabla \np, \eps\,  \div \up)\|_{L^2_{T}L^2(\mathbb{R}_{+}^3)}^2
    \\+  \|(\np, \up, \fp, \eps \nabla \fp)\|_{L^2_{T}L^2(\mathbb{R}^3_{+})}^2
     + \| R^r \|_{L^2_{T}L^2(\mathbb{R}^3_{+})}^2 + \mu \| \sqrt{\eta'} R^s \|_{L^2_{T}L^2(\R^3_{+})}^2 
    \Big).
\end{multline} 
Note that the above estimate is not enough by itself because of the term 
$  \| \sqrt{ \eta'} \,( \eps\, \nabla \np, \eps\,  \div \up)\|_{L^2_{T}L^2(\mathbb{R}^3_{+})}^2$ in the right hand side that is not controlled by the left hand side.

  The first step in the proof of \eqref{Afinal} consists in manipulating the integral $\I$ (see Lemma \ref{lem+}) : it will give us some control on the potential $\fp$.
  More precisely, we first integrate by parts to write: 
\begin{equation*} 
\I = -  \int_{\R^3_{+}} \eta \,  \fp \, \up \cdot \na (n_{a}+ n)  - \int_{\R^3_{+}} \eta \,  \fp \, (n_a + n) \div \up  - \int_{\R^3_{+}}  \eta' \, \fp \,  \big( (n_{a}+ n)  \up_3 \big).
\end{equation*} 
There is no boundary term since $\fp$ satisfies an homogeneous Dirichlet condition (see \eqref{BCfp}).
The first term at the r.h.s can be controlled thanks to \eqref{boundna0}-\eqref{betagrand} and \eqref{hypmin}. We get  
\begin{equation} \label{estimcalIA}
\I  \: \le \: I_1 \: - \:  \int_{\R^3_{+}}  \eta' \, \fp \,  \big( (n_{a}+ n)  \up_3 \big)  \: + \: C(C_a,M) \, \bigl( \mu \left( \| \sqrt{\eta'} (\up,\fp) \|_{L^2}^2 +   \| (\up,\fp) \|_{L^2}^2 \right) 
\end{equation}
with $I_1 \: := \:   - \int_{\R^3_{+}} \eta \,  \fp \, (n_a + n) \div \up$. As above, we use the evolution equation on $\np$ to express $(n_a + n) \div \up$ in terms of $\np$ : 
\begin{equation*}
I_1 \:   := \:   \int_{\R^3_+} \eta \fp \pa_t \np \: + \: \int_{\R^3_+} \eta (u_a + u) \cdot \na \np \, \fp \: - \: \int_{\R^3_+} \eta r_n \fp.
\end{equation*}
We integrate by parts the second integral, and obtain 
\begin{equation} 
 \label{estimI1A}
I_1 \:  \le \: J_1 + J_2 + J_3 \: + \: C(C_a,M) \left( \| R_r \|_{L^2}^2 + \mu \| \sqrt{\eta'} R^s \|_{L^2}^2 \: + \:    \mu (\| \sqrt{\eta'} (\fp,\np) \|_{L^2}^2  \: + \: \| (\fp,\np) \|_{L^2}^2  \right) 
\end{equation}
with 
$$J_1 \: := \:  \int_{\R^3_+} \eta \fp \pa_t \np, \quad J_2  \: := \: -  \int_{\R^3_+} \eta (u_a +u) \cdot \na \fp \, \np  \quad J_3 \: := \: - \int_{\R^3_+} \eta' (u_a +u)_3 \fp \, \np.  $$
Note that we have used implicitly the bound 
$$ - \int_{\R^3_+} \eta \, \div(u_a + u) \fp \, \np \: \le \: C(C_a,M) \, \left(  \mu (\| \sqrt{\eta'} (\fp,\np) \|_{L^2}^2  \: + \: \| (\fp,\np) \|_{L^2}^2  \right) $$
to handle the last term coming from the integration by parts. 

\medskip
{\bf 1) Treatment of $J_1$.}  At first, by differentiating with respect to time  the Poisson equation in \eqref{linear}, we can express $\pa_t \np$ in terms of $\fp$, and substitute inside the expression for $J_1$:
\begin{align}
\nonumber J_1 & =  \int_{\R^3_+} \eta \,   \fp \, \left( \eps^2 \pa_t \Delta \fp  - \pa_t (e^{-\phi_{a}}( 1+ h(\phi)) \fp)  - \pa_t r_\phi \right) \\
\label{I21bis} &= - \eps^2  \frac{d}{dt} \int_{\R^3_+}  \frac{1}{2}\eta \,  |\na \fp|^2 \: - \:  \frac{d}{dt}  \int_{\R^3_+} \frac{1}{2}\eta \,  \fp^2 e^{-\phi_{a}} (1 + h(\phi))\\
\nonumber &\quad \: - \:  \frac{1}{2} \int_{\R^3_+} \eta \,  \pa_t (e^{-\phi_{a}}(1+h(\phi))\big) |\fp|^2  \: - \: \int_{\R^3_+} \eta \, \fp \,  \pa_t r_\phi\\
\nonumber &\quad  \: - \: \eps^2 \int \eta'  \fp \,  \pa_t \pa_{x_3} \fp.
 \end{align}
 
One has by using \eqref{boundnuf} and \eqref{hypmin} the straightforward estimate:
 $$
  - \frac{1}{2} \int_{\R^3_+} \eta \, \pa_t (e^{-\phi_{a}}(1+h(\phi))) |\fp|^2   
  \leq C(C_a, M)
   \| \fp \|_{L^2(\R^3_+)}^2
 $$
 and by using  also \eqref{hypsource}
 $$
  - \: \int_{\R^3_+} \fp \,  \pa_t r_\phi   \leq C(C_{a}, M) \big(  \| R_r \|_{L^2}^2 + \mu \| \sqrt{\eta'} R^s \|_{L^2}^2 \: + \:    \mu (\| \sqrt{\eta'} \fp \|_{L^2}^2  \: + \: \| \fp \|_{L^2}^2   \big).
$$ 
 As regards the last term, we claim 
 \begin{lemma} \label{lemmafp}
 The following inequality holds.
 \begin{multline}
\label{easybound2}
\left| \int \eps \eta'  \fp \,  \eps \pa_t \pa_{x_3} \fp\right| \: \leq \:   \| \sqrt{\eta'}\fp \|_{L^2}  \| \sqrt{\eta'} (\eps \, \na \np, \eps \, \div \up) \|_{L^2}^2
   \\+  C(C_{a}, M) \big( \|R^r\|_{L^2}^2 + \mu \| \sqrt{\eta'} R^s\|_{L^2}^2 +  \mu \| \sqrt{\eta'} \fp \|_{L^2}^2 \: + \:   \|\fp  \|^2_{L^2} \big).
\end{multline}
 \end{lemma}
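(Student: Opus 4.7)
The strategy is to integrate by parts in $x_{3}$ to peel the $\partial_{x_{3}}$ off $\partial_{t}\fp$, and then obtain an elliptic estimate for $\eps\partial_{t}\fp$ by viewing the time-differentiated Poisson equation as an elliptic equation whose source is supplied by the continuity equation.

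Since $\fp\vert_{x_{3}=0}=0$ by \eqref{BCfp} (hence $\partial_{t}\fp\vert_{x_{3}=0}=0$) and $\eta'$ decays exponentially at infinity, writing $\eps\,\partial_{t}\partial_{x_{3}}\fp=\partial_{x_{3}}(\eps\,\partial_{t}\fp)$ and integrating by parts in $x_{3}$ gives
\[
\int \eps\,\eta'\,\fp\,\eps\,\partial_{t}\partial_{x_{3}}\fp\,dx
= -\int \eps^{2}\eta''\,\fp\,\partial_{t}\fp\,dx -\int \eta'\,(\eps\,\partial_{x_{3}}\fp)(\eps\,\partial_{t}\fp)\,dx.
\]
Using the pointwise bound $|\eps\eta''|\leq C\mu\,\eta'$ furnished by \eqref{eta''} together with Cauchy--Schwarz, both terms are controlled by a combination of $\|\sqrt{\eta'}\fp\|_{L^{2}}$, $\|\sqrt{\eta'}\,\eps\,\partial_{x_{3}}\fp\|_{L^{2}}$ and $\|\sqrt{\eta'}\,\eps\,\partial_{t}\fp\|_{L^{2}}$. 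The first two are available on the left-hand side of the energy estimate being assembled, so the task reduces to controlling the last quantity.

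For this, I differentiate the Poisson equation (third line of \eqref{linear}) in time,
\[
\eps^{2}\Delta\partial_{t}\fp - e^{-\phi_{a}}\bigl(1+h(\phi)\bigr)\partial_{t}\fp = \partial_{t}\np + \partial_{t}\!\bigl[e^{-\phi_{a}}(1+h(\phi))\bigr]\fp + \partial_{t}r_{\phi},
\]
and replace $\partial_{t}\np=r_{n}-(u_{a}+u)\cdot\na\np-(n_{a}+n)\,\div\up$ from the continuity equation, the crucial point being that after multiplication by $\eps$ the right-hand side is pointwise controlled by $\eps\na\np$, $\eps\div\up$ and $\eps r_{n}$. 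Testing the resulting identity against $-\eps^{2}\eta'\partial_{t}\fp$ and integrating by parts in $x_{3}$ (boundary terms vanish again because $\partial_{t}\fp\vert_{x_{3}=0}=0$ and $\eta'$ decays) yields
\[
\int \eta'\,|\eps^{2}\na\partial_{t}\fp|^{2} + \int \eta'\,e^{-\phi_{a}}(1+h(\phi))\,|\eps\,\partial_{t}\fp|^{2}
= -\eps^{2}\!\int\eta''(\partial_{t}\fp)\partial_{x_{3}}\partial_{t}\fp - \eps^{2}\!\int\eta'(\partial_{t}\fp)\widetilde{S}.
\]
The $\eta''$-commutator is absorbed via \eqref{eta''} and Young into the first term on the left; the $\widetilde{S}$-term is handled by Cauchy--Schwarz, the continuity substitution converting $\eps^{2}\partial_{t}\np$ into $\eps\na\np$ and $\eps\div\up$. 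The coercivity $e^{-\phi_{a}}(1+h(\phi))\geq 1/M$ from \eqref{hypmin} then delivers the auxiliary bound
\[
\|\sqrt{\eta'}\,\eps\,\partial_{t}\fp\|_{L^{2}}^{2}
\leq C(C_{a},M)\Big( \|\sqrt{\eta'}(\eps\na\np,\eps\div\up)\|_{L^{2}}^{2} + \|R^{r}\|_{L^{2}}^{2} + \mu\|\sqrt{\eta'}R^{s}\|_{L^{2}}^{2} + \|\fp\|_{L^{2}}^{2}\Big),
\]
and plugging this back into the first display, together with a final Young's inequality, yields \eqref{easybound2}.

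The main technical obstacle is bookkeeping the powers of $\eps$. The hypothesis $|r_{\phi}/\eps|\leq\mu\eta'R^{s}+R^{r}$ from \eqref{hypsource} is essential to generate the missing factor of $\eps$ in the elliptic source; one also must repeatedly exploit $\eps\eta'\leq C\delta/\mu$ from \eqref{estimpoids}, and the smallness of $\mu$ and of $\delta/\mu^{2}$, both to absorb the $\eta''$-commutators back into the left-hand side and to reduce the singular-source contributions to the permissible forms $\mu\|\sqrt{\eta'}R^{s}\|_{L^{2}}^{2}$ and $\|R^{r}\|_{L^{2}}^{2}$.
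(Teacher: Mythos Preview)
Your elliptic estimate on the time-differentiated Poisson equation and the substitution of $\pa_t\np$ via the continuity equation are both correct and are exactly what the paper does. The one difference is your opening move: you integrate by parts in $x_3$ first, which produces the factor $\|\sqrt{\eta'}\,\eps\,\pa_{x_3}\fp\|_{L^2}$ paired with $\|\sqrt{\eta'}\,\eps\,\pa_t\fp\|_{L^2}$. That first factor does \emph{not} appear on the right-hand side of \eqref{easybound2}, and it cannot be reduced to the terms that do (an elliptic estimate on the undifferentiated Poisson equation would bring in $\|\sqrt{\eta'}\np\|$, which is equally absent). So your concluding sentence ``yields \eqref{easybound2}'' is not quite right: what your argument delivers is a variant of the lemma with an extra $\|\sqrt{\eta'}\,\eps\,\na\fp\|_{L^2}$ on the right. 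You acknowledge as much when you say this quantity is ``available on the left-hand side of the energy estimate being assembled'' --- which is true in the context of \eqref{Afin1}, and your variant would be perfectly adequate there --- but it is a weaker statement than the lemma claims.

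The paper avoids this by skipping your integration by parts and applying Cauchy--Schwarz directly,
\[
\Bigl|\int \eps\eta'\,\fp\,\eps\,\pa_t\pa_{x_3}\fp\Bigr| \le \|\sqrt{\eta'}\fp\|_{L^2}\,\|\sqrt{\eta'}\,\eps^2\,\pa_t\pa_{x_3}\fp\|_{L^2},
\]
and then controlling the second factor via the \emph{same} $\eta'$-weighted elliptic estimate you derive. The point is that this estimate already controls the full gradient $\|\sqrt{\eta'}\,\eps\,\na\pa_t\fp\|_{L^2}$ (the first term on the left of your elliptic identity), not only the zero-order piece $\|\sqrt{\eta'}\,\eps\,\pa_t\fp\|_{L^2}$. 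Using the gradient piece, together with the extra factor of $\eps$ in $\eps^2\pa_t\pa_{x_3}\fp$, gives exactly the pairing $\|\sqrt{\eta'}\fp\|\cdot\|\sqrt{\eta'}(\eps\na\np,\eps\div\up)\|$ after substituting $\eps\,\pa_t\np$, and one arrives at \eqref{easybound2} on the nose.
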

 We postpone the proof of the lemma to the end of the section. Combining all the previous estimates, we deduce: 
 \begin{equation}   \label{estimJ1A}
 \begin{aligned}
J_1  & \le \:  - \eps^2  \pa_t \int_{\R^3_+}  \frac{1}{2}\eta \,  |\na \fp|^2 \: - \: \eps \pa_t \int_{\R^3_+} \frac{1}{2}\eta \,  \fp^2 e^{-\phi_{a}} (1 + h(\phi))
 \: + \: C(C_{a}, M) \big(  \|R^r\|_{L^2}^2 + \mu \| \sqrt{\eta'} R^s\|_{L^2}^2   \\
& + \:  \mu \| \sqrt{\eta'} (\np, \up, \fp) \|_{L^2}^2  \: +   \| \sqrt{\eta'}\fp \|_{L^2}  \| \sqrt{\eta'} (\eps \, \na \np, \eps \, \div \up) \|_{L^2}
 +   \|(\np,\fp)\|^2_{L^2} \big) .
\end{aligned}
\end{equation}

 \medskip
{\bf 2) Treatment of $J_2$.} 
We use  the Poisson equation to  express $\np$ in terms of $\fp$. We get  
 \begin{align}
 \nonumber J_2 & = - \int_{\R^3_+}   \eta \,   \na \fp \cdot (\eps^2 \Delta \fp (u_a + u))  \: + \:  \int_{\R^3_+}    \eta \,  \na \fp \cdot (e^{-\phi_{a}}(1+ h(\phi)) \fp (u_a+u)) \\
\label{Jdef} & + \:      \int_{\R^3_+}  \eta \,   \na \fp \cdot (r_\phi (u_a + u))  := K_1 + K_2 + K_3.
\end{align}
One has 
\begin{align}
\nonumber
K_2 &=  \frac{1}{2} \int_{\R^3_+}    \eta \,   \na |\fp|^2 \cdot (e^{-\phi_{a}} (1+ h(\phi)) (u_a+ u)) \\
\nonumber
& =  - \frac{1}{2}  \int_{\R^3_+}  \eta \,    |\fp|^2 \, \div\big(e^{-\phi_{a}}(1+ h(\phi))  (u_a + u)\big)
- \frac{1}{2}  \int_{\R^3_+}  \eta' \,    |\fp|^2 \, e^{-\phi_{a}}(1+ h(\phi))  (u_a + u)_3 \\
\label{estimK2A}
& \le \:  C(C_a,M) \, \left( \mu \|\sqrt{\eta'} \fp\|_{L^2}^2 \: + \: \| \fp \|_{L^2}^2 \right)  \: - \: \frac{1}{2}  \int_{\R^3_+}  \eta' \,    |\fp|^2 \, e^{-\phi_{a}}(1+ h(\phi))  (u_a + u)_3 
\end{align}
Again, the bound on the first term at the right hand side is a consequence of  \eqref{betagrand}.
 Thanks to \eqref{boundnuf}  and \eqref{hypsource} one has also 
\begin{align*}
K_3 \:  & \le \: C(C_{a}, M) \int_{\R^3_{+}} \big| { r_{\phi} \over \eps} \big| \| \eps \nabla \fp\| \, dx  \\
 & \: \leq   C(C_a, M) \big(   \|  R^r\|_{L^2(\R^3_+)}^2  \: + \:  \mu \|\sqrt{\eta' } R^s \|_{L^2}^2 \: + \: \mu  \| \sqrt{\eta'} \, \eps \nabla \fp \|^2_{L^2} \: + \:    \|  \e \na \fp \|^2_{L^2}
 \big). 
\end{align*}
Finally, we compute 
\begin{equation}
\label{J1}
\begin{aligned}
K_1 & = \: \eps^2 \int_{\R^3_+} \eta \, ((\na \fp \cdot \na )  (u_a + u))\cdot \na \fp \: + \: \eps^2 \int_{\R^3_+}  \eta \,  \left( \na \frac{|\na \fp|^2}{2} \right) \cdot (u_a+u)  \\
&+ \eps^2 \int_{\R^3_+} \eta' \,  \pa_3 \fp \,  \na \fp \cdot   (u_a + u)  + \eps^2 \int_{x_3=0}  \, \pa_3 \fp  \na \fp \cdot   (u_a + u) \\
& =: L_1 + L_2 + L_3 \: + \:   \eps^2 \int_{x_3=0}  \, |\pa_3 \fp|^2 (u_a + u)_3.
\end{aligned}
\end{equation}
We point out  the simplification of the boundary term, due to the homogeneous boundary condition on $\fp$. 
Once again,  \eqref{betagrand} leads to 
\begin{align*}
|L_1| &  \leq     C(C_{a}, M) \int_{\R^3_+}\eta \,{\delta \over \eps}e^{- {\gamma_{0} x_{3}\over \eps}} \,  \eps^2| \na \fp |^2  +  C(C_{a}, M) \|  \eps \nabla \dot \phi\|_{L^2}^2 \\
&  \leq   C(C_{a}, M) \Big( \mu  \| \sqrt{\eta'} \, \eps \nabla \fp \|^2_{L^2} \: + \:    \|  \e \na \fp \|^2_{L^2}\Big).
\end{align*}
For $L_2$, by integration by parts, we can write:
\begin{equation}
L_2= - \eps^2 \int_{\R^3_+}  \eta' \,  \left( \frac{|\na \fp|^2}{2} \right)  (u_a+u)_3 -\eps^2 \int_{\R^3_+}  \eta \,  \left(  \frac{|\na \fp|^2}{2} \right) \div (u_a+u) 
- \eps^2 \int_{x_3=0}   \, \left(  \frac{|\pa_{x_{3}} \fp|^2}{2} \right)  (u_a+u)_3.
\end{equation}
where we have used that $\nabla_{1,2} \fp=0$ on the boundary.
The first  term has a ``bad sign'' but will be compensated by another contribution (coming from $J_3$). 
The second term satisfies:
$$
\left| \eps^2 \int_{\R^3_+}  \eta \,  \left(  \frac{|\na \fp|^2}{2} \right) \div (u_a+u)\right| \: \leq  \: C(C_{a}, M) \big( \mu  \| \sqrt{\eta'} \, \eps \nabla \fp \|^2_{L^2} \: + \:    \|  \e \na \fp \|^2_{L^2}\big).
$$
Finally, the third (boundary) term of $L_2$ has also a bad sign,  but will be  compensated by  the  other boundary term  of \eqref{J1}.

For $L_3$, we rely on the fact that
$$ |(u^a + u )_{1,2} -  (\Gamma u^0)_{1,2}|\leq C_{a} ( \eps + \delta + x_{3}) + \|u \|_{L^\infty}.$$
 Together with our assumption   \eqref{hypvitan} on the trace ($|(\Gamma u^0)_{1,2}|\leq \delta$), this implies
\begin{align*}
L_3 \: \leq  \: \int_{\R^3_+} \eta' \,  |\eps \, \pa_3 \fp|^2 \,   (u^a(t,x) + u)_{3}  + \mu  \int_{\R^3_+} \eta' \,  |\eps \,  \na \fp|^2  + { \| u \|_{L^\infty} \over \eps} \| \eps \nabla \fp \|_{L^2}^2.
\end{align*}
 Note that the last term in the right hand side of the above estimate is bounded by $C(M)  \| \eps \nabla \fp \|_{L^2}^2$ thanks to \eqref{hypmin}.
 
\medskip
Putting together the estimates on $L_1$, $L_2$ and $L_3$, we find 
\begin{equation} \label{estimK1A}
\begin{aligned}
K_1 \: 
& \le \: - \eps^2 \int_{\R^3_+}  \eta' \,  \frac{|\na \fp|^2}{2}   (u_a+u)_3 \: + \:  \frac{\eps^2}{2} \int_{x_3=0}  \, |\pa_3 \fp|^2 (u_a + u)_3 \\
& + 
\int_{\R^3_+} \eta' \,  |\eps \, \pa_3 \fp|^2 \,   (u^a(t,x) + u)_{3} \: + \: C(C_a, M) \bigl( \mu  \| \sqrt{\eta'} \, \eps \nabla \fp \|^2_{L^2} \: + \:    \|  \e \na \fp \|^2_{L^2} \bigr).
\end{aligned}
\end{equation}
Eventually, we derive an inequality on $J_2$:
\begin{equation} \label{estimJ2A}
\begin{aligned}
J_2 \:  \le \: & - \eps^2 \int_{\R^3_+}  \eta' \,  \frac{|\na \fp|^2}{2}   (u_a+u)_3 \: + \:  \frac{\eps^2}{2} \int_{x_3=0}  \, |\pa_3 \fp|^2 (u_a + u)_3  \\
&   \: - \: \frac{1}{2}  \int_{\R^3_+}  \eta' \,    |\fp|^2 \, e^{-\phi_{a}}(1+ h(\phi))  (u_a + u)_3 \:  
\: + \: \int_{\R^3_+} \eta' \,  |\eps \, \pa_3 \fp|^2 \,   (u^a(t,x) + u)_{3} \: \\
& + \: C(C_a, M) \bigl( \| R^r \|_{L^2}^2  \: + \: \mu \| \sqrt{\eta'} \,  R^s \|_{L^2}^2  \: + \:   \| \eps \nabla \dot \phi\|_{L^2}^2  \: + \:   \mu \| \sqrt{\eta'} \, \eps \nabla \fp \|^2_{L^2}  \bigr).
\end{aligned}
\end{equation}

\medskip
{\bf 3) Treatment of $J_3$.}  First, we use the Poisson equation to replace $\np$ :
\begin{equation}
 J_3 = -  \int_{\R^3_+}  \eta' \,    \fp   (u_a+ u)_3 \left[\eps^2  \Delta \fp  -  e^{-\phi_{a}}( 1+ h(\phi)) \fp  -  r_\phi \right] =: K_1 + K_2 + K_3.
\end{equation}
Let us start with the estimate of $K_1$. With the help of an integration by parts, we get:
\begin{equation}
\begin{aligned}
K_1 &=  \eps^2 \int_{\R^3_+}  \eta' \,     (u_a+ u)_3 |\na \fp|^2 +  \eps^2 \int_{\R^3_+}  \eta' \,    \fp   \na (u_a+ u)_3 \cdot \na \fp
+  \eps^2 \int_{\R^3_+}  \eta'' \,    \fp   (u_a+ u)_3 \na \fp.
\end{aligned}
\end{equation}
The first term has a ``good sign'', and will be used to absorb the first term at the r.h.s. of \eqref{estimJ2A}.  The second and third terms can be bounded using properties of the approximate solution and of the weight $\eta$  (notably \eqref{eta''}). 
We end up with 
\begin{equation} \label{estimaK1A}
 K_1 \: \le \:  \eps^2 \int_{\R^3_+}  \eta' \,     (u_a+ u)_3 |\na \fp|^2 \: + \:  C(C_{a}, M) \Big( \mu \| \sqrt{\eta'} (\fp, \eps \nabla \fp) \|_{L^2}^2  +   \| (\fp, \eps \nabla \fp) \|_{L^2}^2\Big). 
\end{equation}
$K_2$ is also a good term, that will absorb the third term at the r.h.s. of \eqref{estimJ2A}. For $K_3$, we use as usual \eqref{hypsource} and write  
$$ K_3 \: \le \: C(C_a, M) \bigl( \| R^r \|_{L^2}^2  \: + \: \mu  \| \sqrt{\eta'} \,  R^s \|_{L^2}^2 \: + \: \mu \| \sqrt{\eta'} \fp \|_{L^2}^2 \: + \:   \| \fp \|_{L^2}^2\bigr). $$
Finally, these last bounds yield
\begin{equation} \label{estimJ3A}
\begin{aligned}
J_3 \:  \le \: & \eps^2 \int_{\R^3_+}  \eta' \,     (u_a+ u)_3 |\na \fp|^2 \: + \:  \int_{\R^3_+}  \eta' \,      (u_a+ u)_3   e^{-\phi_{a}}( 1+ h(\phi)) |\fp|^2 \\
& C(C_a, M) \bigl( \| R^r \|_{L^2}^2  \: + \: \mu  \| \sqrt{\eta'} \,  R^s \|_{L^2}^2 \: + \: \mu \| \sqrt{\eta'}  (\fp, \e \na \fp)  \|_{L^2}^2 \: + \:  \| (\fp, \e \na \fp) \|_{L^2}^2 \bigr). 
\end{aligned}
\end{equation}

\medskip
{\bf Conclusion.} We can now collect the estimates \eqref{estimJ1A}, \eqref{estimJ2A} and \eqref{estimJ3A}, and insert them into \eqref{estimI1A}, followed by \eqref{estimcalIA}. We get 
\begin{equation*}
\begin{aligned}
 & \I \: + \:  \eps^2   \frac{d}{dt}  \int_{\R^3_+}  \frac{1}{2}\eta \,  |\na \fp|^2 \: + \:  \frac{d}{dt}  \int_{\R^3_+} \frac{1}{2}\eta \,  \fp^2 e^{-\phi_{a}} (1 + h(\phi)) \\
& \: \le \: - \:  \int_{\R^3_{+}}  \eta' \, \fp \,  \big( (n_{a}+ n)  \up_3 \big) 
+ \frac{\eps^2}{2} \int_{\R^3_+}  \eta' \,  |\na \fp|^2   (u_a+u)_3 \: + \:  \frac{\eps^2}{2} \int_{x_3=0}  \, |\pa_3 \fp|^2 (u_a + u)_3  \\
&   \: + \: \frac{1}{2}  \int_{\R^3_+}  \eta'  \, (u_a + u)_3 \, e^{-\phi_{a}}(1+ h(\phi))  \,  |\fp|^2\:  
\: + \: \int_{\R^3_+} \eta' \,  |\eps \, \pa_3 \fp|^2 \,   (u^a(t,x) + u)_{3} \: \\
 & \: + \: C(C_a, M) \bigl( \| R^r \|_{L^2}^2  \: + \: \mu  \| \sqrt{\eta'}  R^s \|_{L^2}^2  \: + \: \mu \| \sqrt{\eta'}  (\up, \np, \fp, \e \na \fp)  \|_{L^2}^2  \: + \:  \| (\up, \np, \fp, \e \na \fp) \|_{L^2}^2 \\
 &  \: + \: \| \sqrt{\eta'}\fp \|_{L^2}  \| \sqrt{\eta'} (\eps \, \na \np, \eps \, \div \up) \|_{L^2}.
 \end{aligned}
 \end{equation*}

 By combining this last estimate with the estimate of Lemma \ref{lem+} and using the substitution \eqref{subtrace} in the terms involving $\eta'$, we obtain
the estimate
\begin{equation}
\label{Afin1}
\begin{aligned}
 \frac{d}{dt} \Big[ \int_{\R^3_+}  \eta \, (n_a + n) \frac{|\up|^2}{2} + \int_{\R^3_+}  \eta  \frac{ T^{i} \np^2}{2 (n_a + n) } +  \frac{1}{2}\int_{\R^3_{+}}  \eta \, \eps^2 |\nabla \fp|^2 +  \eta \, e^{\phi_{a}} (1 + h(\phi)) \fp^2\Big] \\
  +  \frac{1}{2} \int_{\R^3_+}  \eta' \,     |\Gamma(u_a+ u)_3| |\eps \, \na \fp|^2  + \frac{1}{2} \int_{\R^3_+}  \eta' \,     |\Gamma(u_a+ u)_3| |\eps \, \pa_3 \fp|^2 + \frac{1}{2} \int_{\R^3_+}  \eta' \, Q^A(\np, \up,\fp) \\
  + \frac{1}{2} \int_{x_3=0} |(u_a + u)_{3}| |\eps \, \pa_3 \fp|^2  + \frac{1}{2} \int_{x_3=0} Q^0 (\np,\up)\\
  \leq  \| \sqrt{\eta'} \fp \|_{L^2} \| \sqrt{\eta'}( \eps \, \nabla \np, \eps \, \div \up) \|_{L^2} +    C(C_{a}, M) \big(
 \| (\np, \up, \fp, \eps \, \na \fp)\|_{L^2}^2  \\
 +  \mu  \| \sqrt{\eta'} \, \eps \nabla \fp \|_{L^2}^2 +  \|R^r \|_{L^2(\mathbb{R}^3_{+})}^2 +  \mu \| \sqrt{\eta'} R^s \|_{L^2(\R^3_{+})}^2  \big),
\end{aligned}
\end{equation}
where $Q^A$ is the positive quadratic form determined by the following symmetric matrix:
$$
M^A =  \Gamma \begin{bmatrix} T^i \frac{|(u_{a} + u)_3|}{n_a + n}& -T^i e^\top& 0 \\ - T^i e& (n_a + n) |(u_{a} + u)_{3} |{Id_{3}}& (n_a + n) e  \\ 0 & (n_a + n) e^\top & e^{- \phi_{a}}(1+ h(\phi)) |(u_{a} + u)_{3}| \end{bmatrix} -  \mu  C I_5.
$$
Since $\mu$ can be chosen as small as we want, it suffices to prove that the leading above matrix is positive. According to Sylvester's criterion, we only have to check that the leading principal minors, denoted by $(\D_i)_{ 1 \leq i \leq 5}$ are positive. We compute:
$$
\D_1 = T^i \Gamma \Big( \frac{|(u_{a} + u)_{3}|}{n_a + n} \Big)>0, \quad  \D_{2}=  T^i  \Gamma\Big(\frac{|(u_{a} + u)_{3}|^2}{n_a + n} \Big) >0, \quad \D_{3}=  T^i  \Gamma \Big(\frac{|(u_{a} + u)_{3}|^3}{n_a + n}  \Big)>0 
$$
$$
\D_4 =  T^i \Gamma  \Big( (n_{a}+ n)^2 | ( u_{a} + u)_{3}|^2 \left( (|u_{a} + u)_{3}|^2-T^i\right) \Big),
$$
which is also positive as a straightforward consequence of Bohm condition \eqref{hypbohm}.
Finally the determinant is equal to:
$$
\D_5= T^i  \Gamma\Big( (|(u_{a} + u)_{3}|^3 (n_a + n)^2 \left( {e^{- \phi_{a} } (1 + h(\phi)) \over n_{a} + n } |(u_{a} + u)_{3}|^2 - (T^i+1)\right) \Big),
$$
which is positive as soon as $|(u_{a} + u)_{3}|>\sqrt{T^i+1}$ as ensured by the Bohm condition \eqref{hypbohm}. Indeed,  
we have that  $|(u, n, \phi)|= O(\eps)$ thanks to \eqref{hypmin}, and that since $n^0= e^{\phi^0}$ that
 $n_{a} = e^{- \phi_{a}} + O(\delta + \eps)$, therefore  
 $$  \Gamma \left({e^{- \phi_{a} } (1 + h(\phi)) \over n_{a} + n } |(u_{a} + u)_{3}|^2 - (T^i+1) \right)=  \Gamma \big(  |(u_{a} + u)_{3}|^2 - (T^i+1) \big) + O(\eps + \delta)>0,$$
  for $\eps $ and $\delta $ sufficiently small. The quadratic form $Q^0$ is also positive as already observed.

To derive \eqref{Afinal} from \eqref{Afin1}, it remains to note that, by positivity  of   $Q^A$,  we can write the Young inequality 
$$\| \sqrt{\eta'} \fp \|_{L^2} \| \sqrt{\eta'}( \eps \, \nabla \np, \eps \, \div \up) \|_{L^2}  \leq  \tilde\mu  \| \sqrt{\eta'} \fp \|_{L^2}^2+  C_{\tilde \mu}
   \| \sqrt{\eta'}( \eps \, \nabla \np, \eps \, \div \up) \|_{L^2}^2,$$
    and absorb the term  $\tilde\mu  \| \sqrt{\eta'} \fp \|_{L^2}^2 $ in the left hand side by choosing $\tilde \mu$ small enough. In the same spirit, 
     the term $ \mu \| \sqrt{\eta'}\,  \eps \nabla \fp \|_{L^2}^2$ can be absorbed by the quadratic terms  in $\eps \na \fp$ at the left hand side of \eqref{Afin1} for  $\mu$  small enough.


\medskip


To conclude this section, we still have to provide the proof of Lemma \ref{lemmafp}. 

\medskip
{\bf Proof of Lemma  \ref{lemmafp}.} To estimate the term  $\eps^2 \int \eta'  \fp \,  \pa_t \pa_{x_3} \fp$ in \eqref{I21bis}, we shall rely on an elliptic estimate on the Poisson equation.
We first have the straightforward bound:
\begin{equation}
\label{easybound}
\begin{aligned}
\left| \int \eps \eta'  \fp \,  \eps \pa_t \pa_{x_3} \fp\right| &\leq \|\sqrt{\eta'} \fp \|_{L^2}\| \sqrt{ \, \eta'} \,  \eps^2 \pa_t \pa_{x_3} \fp  \|_{L^2}.
\end{aligned}
\end{equation}
Differentiating the Poisson with respect to time and taking the product with $\eta' \, \pa_t \fp$, we obtain:
\begin{multline*}
\int_{\R^3_+} \eta'  \, e^{-\phi_a}(1+h(\phi))  |\pa_t \fp|^2 +  \eps^2 \int_{\R^3_+} \eta' \, |\na \pa_t \fp|^2 + \eps^2 \int_{\R^3_+} \eta'' \,  \pa_t \pa_{x_3} \fp  \pa_t \fp \\
= - \int_{\R^3_+} \eta' \, \pa_t \np \, \pa_t \fp - \int_{\R^3_+} \eta' \,  \pa_t(e^{-\phi_a}(1+h(\phi))) \fp \, \pa_t \fp - \int_{\R^3_{+}} \eta' \partial_{t} r_{\phi} \partial_{t} \fp. 
\end{multline*}

We have chosen the parameters of the weight $\eta$ so that  $\delta  /\mu^2 $  is  small enough. Hence, 
$$
 \left|\eps^2 \int_{\R^3_+} \eta'' \,  \pa_t \pa_{x_3} \fp  \,\pa_t \fp\right| \leq \frac{1}{2}  \eps^2 \int_{\R^3_+} \eta' \, |\na \pa_t \fp|^2  + \frac{1}{4}\int_{\R^3_+} \eta'  \, e^{-\phi_a}(1+h(\phi))  |\pa_t \fp|^2.
$$

We also have  by using \eqref{hypmin} and the Young inequality that:
\begin{multline*}
\left|\int_{\R^3_+} - \eta' \, \pa_t \np \, \pa_t \fp - \int_{\R^3_+} \eta' \,  \pa_t(e^{-\phi_a}(1+h(\phi))) \fp \, \pa_t \fp\right| \\
\leq \frac{1}{8} \int_{\R^3_+} \eta'  \, e^{-\phi_a}(1+h(\phi))  |\pa_t \fp|^2 + C(C_{a}, M ) \left( \| \sqrt{\eta'} \pa_t \np\|_{L^2}^2 +   \| \sqrt{\eta'} \fp\|_{L^2}^2\right)
\end{multline*} 
and by using \eqref{hypsource} that
$$  \int_{\R^3_{+}} \eta' \partial_{t} r_{\phi} \partial_{t} \fp \leq C(C_{a}, M) \big( { 1 \over \eps } \| R^r\|_{L^2}^2 + {\mu^2 \over \eps^2}  \| \sqrt{\eta'} \, R^s \|_{L^2}^2\big) + 
 { 1 \over 8}  \int_{\R^3_+} \eta'  \, e^{-\phi_a}(1+h(\phi))  |\pa_t \fp|^2.$$
We thus  end up with the elliptic estimate:
\begin{multline}
\label{3.37}
\int_{\R^3_+} \eta'  \, e^{-\phi_a}(1+h(\phi))  |\pa_t \fp|^2 +    \int_{\R^3_+} \eta' \, | \eps\na \pa_t \fp|^2 
\\
\leq C(C_{a}, M) \left( \| \sqrt{\eta'} \pa_t \np\|_{L^2}^2 +   \| \sqrt{\eta'} \fp\|_{L^2}^2  { 1 \over \eps } \| R^r\|_{L^2}^2 + {\mu^2 \over \eps^2}  \| \sqrt{\eta'} \, R^s \|_{L^2}^2 \right).
\end{multline} 
Going back to \eqref{easybound}, this yields
$$\left| \int \eps \eta'  \fp \,  \eps \pa_t \pa_{x_3} \fp\right| \leq C(C_{a}, M) \|\sqrt{\eta'} \fp \|_{L^2}\big(  \| \sqrt{\eta'} \eps \partial_{t} \np \|_{L^2} + \eps \|  \sqrt{\eta'} \fp \|_{L^2}
 +  \sqrt{\eps} \| R^r\|_{L^2} + \mu    \| \sqrt{\eta'} \, R^s \|_{L^2}   \big).$$
Now using the transport equation  satisfied by $\np$, we can estimate $\| \sqrt{\eta'}  \eps \pa_t  \np\|_{L^2}$. This yields:
$$\| \sqrt{\eta'} \eps  \pa_t \np\|_{L^2}
\leq  C_{a} \| \sqrt{\eta'} \, (\eps \nabla \np, \eps \div \up)\|_{L^2}  +  \sqrt{\eps} \| R^r \|_{L^2} + \mu \| \sqrt{\eta'} R^s \|_{L^2}.
$$
For the last term, we have used that $\eps \eta' \le \mu$ with the choice of the parameters.  We have thus  proven that 
\begin{multline*}
\left| \int \eps \eta'  \fp \,  \eps \pa_t \pa_{x_3} \fp\right| \: \leq \:   \| \sqrt{\eta'}\fp \|_{L^2}  \| \sqrt{\eta'} (\eps \, \na \np, \eps \, \div \up) \|_{L^2}
   \\+  C(C_{a}, M) \big( \|R^r\|_{L^2}^2 + \mu \| \sqrt{\eta'} R^s\|_{L^2}^2 +  \mu \| \sqrt{\eta'} \fp \|_{L^2}^2 \: + \:   \|\fp  \|^2_{L^2} \big).
\end{multline*}

\bigskip
{\bf B. The second energy estimate}

The previous computation suggests to look for a control on $\eps \, \na \np$ and $\eps \, \div \up$, and this motivates the next energy estimate.  We shall prove that 
\begin{multline}
\label{Bfinal}
 \|(\np, \up, \eps\, \nabla \np,  \eps\,  \div \up )\|_{L^\infty_{T}L^2(\mathbb{R}^3_{+})}^2 +   \| \sqrt{\eta'}(\np, \up,  \eps\, \nabla \np, \eps\, \div \up) \|_{L^2_{T} L^2(\mathbb{R}^3_{+})}^2  \\ 
  + \|\Gamma( \np,  \up, \eps \partial_{x_{3}} \np, \eps\, \div \up) \|_{L^2_{T}L^2(\mathbb{R}^2)} \\
   \leq C(C_{a}, M) \Big( \|(\np, \up, \eps \nabla \np, \eps \div \up)(0)\|_{L^2(\mathbb{R}^3_{+})}^2 + \mu   \|  \sqrt{\eta'}\,  \eps \nabla \up \|_{L^2_{T}L^2(\mathbb{R}^3_{+})}^2
    \\+ \ \|(\np, \up, \eps \nabla \np, \eps \nabla \up , \fp, \eps \nabla \fp)\|_{L^2_{T}L^2(\mathbb{R}^3_{+})}^2
     + \|R^r \|_{L^2_{T}L^2(\mathbb{R}^3_{+})}^2 + \mu \| \sqrt{\eta'} R^s \|_{L^2_{T}L^2(\R^3_{+})}^2
    \Big).
\end{multline} 
Let us stress that this estimate alone does not allow to conclude : the right hand side involves the full $\eps \nabla \up$, while the left hand side controls only  $\eps \div \up$. Later, we shall establish a third estimate,  in order to handle  $\eps \nabla \up$ and close the argument.

To prove \eqref{Bfinal}, we shall again rely on  Lemma \ref{lem+}, but $\I$ will be handled in a different way. By integration by parts:
\begin{align}
\nonumber
\I & = \int_{\R^3_+} \eta  \na \fp \, (n_a + n) \cdot  \up \\ 
\nonumber
   & = -\int_{\R^3_+} \eta   \fp \, (n_a + n) \div \up  \: - \:  \int_{\R^3_+} \eta'   \fp \, (n_a + n)  \up_3 \:  - \:   \int_{\R^3_+} \eta   \fp \, \na (n_a + n) \cdot \up   \\
  \label{estimcalIB} 
& \le \:  I_1  \: + \: I_2 \: +  \: C(C_a,M) \left( \mu \| \sqrt{\eta'} (\up,\fp) \|_{L^2}^2 \: + \:    \| (\up,\fp) \|_{L^2}^2 \right) 
\end{align}
with 
$$I_1 \:  := \:  -\int_{\R^3_+} \eta   \fp \, (n_a + n) \div \up, \quad I_2 \: := \: - \:  \int_{\R^3_+} \eta'   \fp \, (n_a + n)  \up_3.  $$
The bilinear singular term $I_2$ will be ``compensated'' in the end (thanks to the Bohm condition). Therefore we first focus on $ I_1$. 

\medskip
{\bf 1) Treatment of $I_1$.} A first idea is to use the Poisson equation satisfied by $\fp$: 
\begin{equation}
\fp = \frac{1}{1+h(\phi)} e^{\phi_a} \left[ \eps^2 \Delta \fp - \np - r_\phi\right],
\end{equation}
to express $\fp$ in terms of $\eps^2 \Delta \fp$ and $\np$. We get:
\begin{align*}
I_1 & = - \int_{\R^3_+} \eta (n_a + n)\frac{1}{1+h(\phi)} e^{\phi_a} \div \up  \left[ \eps^2 \Delta \fp - \np - r_\phi\right] \\
& \le   J_1 \: + \: J_2 \: + C \int_{\R^3_+}  | \eps \div \up | \, \left| \frac{r_\phi}{\eps}\right|  
\end{align*}
where 
$$ J_1 \: :=  \: - \int_{\R^3_+} \eta (n_a + n)\frac{1}{1+h(\phi)} e^{\phi_a} \div \up  \, (\eps^2 \Delta \fp), \quad J_2 :=   \int_{\R^3_+} \eta (n_a + n)\frac{1}{1+h(\phi)} e^{\phi_a} \div \up \, \np. $$
By \eqref{hypsource}, 
\begin{equation} \label{estimI1B}
I_1  \: \le \: J_1 + J_2 \: + \: C(C_a,M) \, \big( \| R^r \|_{L^2}^2 + \mu \| \sqrt{\eta'} R^s \|_{L^2}^2 \: + \: \mu \| \sqrt{\eta'} \eps \div \up \|_{L^2}^2 \: + \:  \| \eps \div \up \|_{L^2}^2 \big). 
\end{equation}
 We shall now study the terms $J_1,J_2$.

\smallskip
{\bf a. Study of $J_1$.} To evaluate $J_1$, the idea is to take the divergence in the equation satisfied by $\up$, in order to express $\Delta \fp$ in terms of $\up$ and $\np$. This reads:
\begin{equation}
\label{3.48}
(\pa_t + (u_a +u) \cdot \nabla ) (\eps \, \div \up) + \sum_{i=1}^3 \eps \pa_i(u_a +u) \cdot \na \up_i  
 + \, \eps T^i \div  \left(\frac{\na \np}{n_a + n} \right) = \eps \Delta \fp + \eps \div r_u.
\end{equation}
Hence,  $J_1 = \sum_{i=1}^4 K_i$, with 
\begin{align*}
 K_1 & := - \int_{\R^3_+} \eta \frac{n_a + n}{1 + h(\phi)} e^{\phi_a} \left( \pa_t + (u_a + u) \cdot \na \right) \frac{|\eps \div \up|^2}{2}, \\
K_2  & := -  \int_{\R^3_+} \eta \frac{n_a + n}{1 + h(\phi)} e^{\phi_a} (\eps \pa_i u_a \cdot \na) \up_i \, (\eps \div \up), \\
K_3 &  := - T^i \int_{\R^3_+} \eta \frac{n_a + n}{1 + h(\phi)} e^{\phi_a} \eps  \div  \left(\frac{\na \np}{n_a + n} \right)  (\eps \div \up), \\
K_4 &  :=  \int_{\R^3_+} \eta \frac{n_a + n}{1 + h(\phi)} e^{\phi_a}   (\eps \div r_u) \,  (\eps \div \up). 
\end{align*}
The terms $K_2$ and $K_4$ can be bounded through standard arguments. With \eqref{betagrand} and \eqref{hypsource} in mind, we obtain
\begin{equation} \label{estimK2K4B}
K_2 + K_4 \: \le \: C(C_a, M) \, \big(  \| R^r \|_{L^2}^2 + \mu \| \sqrt{\eta'} R^s \|_{L^2}^2 \: + \: \mu \| \sqrt{\eta'} \eps \na \up \|_{L^2}^2 \: + \:  \| \eps \na \up \|_{L^2}^2 \big). 
\end{equation}
Note the presence of $\eps \na \up$ at the r.h.s., due to $K_2$.
We now evaluate the contribution of the convection term $K_1$. Standard manipulations yield
\begin{align*}
K_1 = & -\frac{d}{dt}  \int_{\R^3_+}\eta \frac{n_a +n}{1 + h(\phi)} e^{\phi_a} \frac{(\eps \, \div \up)^2}{2} +   \int_{\R^3_+}\eta \frac{d}{dt}\left[\frac{n_a +n}{1 + h(\phi)} e^{\phi_a} \right] \frac{(\eps \, \div \up)^2}{2} \\
& + \int_{\R^3_+} \eta \, \div\left(\frac{n_a +n}{1 + h(\phi)} e^{\phi_a} (u_a + u)\right)  \frac{(\eps \, \div \up)^2}{2} \\
& + \int_{\R^3_+} \eta' \frac{n_a +n}{1 + h(\phi)} e^{\phi_a}(u_a + u)_3  \frac{(\eps \, \div \up)^2}{2} \:  + \: \int_{x_3=0}  \left[\frac{n_a + n}{1 + h(\phi)} e^{\phi_a} (u_a + u)_3 \frac{(\eps \, \div \up)^2}{2}\right].
\end{align*}
The second term can be bounded by a crude $L^2$ estimate, the third one can be bounded thanks to \eqref{boundna0}-\eqref{betagrand}: we get
\begin{equation} \label{estimK1B}
\begin{aligned}
K_1 \:  \le \: & -\frac{d}{dt}  \int_{\R^3_+}\eta \frac{n_a +n}{1 + h(\phi)} e^{\phi_a} \frac{(\eps \, \div \up)^2}{2} \: + \: \int_{\R^3_+} \eta' \frac{n_a +n}{1 + h(\phi)} e^{\phi_a}(u_a + u)_3  \frac{(\eps \, \div \up)^2}{2} \\
& + \: \int_{x_3=0}  \left[\frac{n_a + n}{1 + h(\phi)} e^{\phi_a} (u_a + u)_3 \frac{(\eps \, \div \up)^2}{2}\right] \: + \: C(C_a,M)  \big( \mu \| \sqrt{\eta'} \eps \div \up \|_{L^2}^2 \: + \:  \| \eps \div \up \|_{L^2}^2 \big).
\end{aligned}
\end{equation} 
The  second term is non-positive, and will be one of the crucial terms to control all singular terms (thanks to the Bohm condition).
Note finally that the boundary contribution is non-positive: this will also help later.

Now we turn to the integral that involves the pressure  term, for which we perform an integration by parts
\begin{align*}
K_3 &=  T^i \int_{\R^3_+} \eta  \left(\frac{1}{n_a +n} \frac{1}{1+h(\phi)} e^{\phi_a}\right)\na[(n_a +n)(\eps \, \div \up)] \cdot \eps {\na \np} \\
&+ T^i \int_{\R^3_+} \eta \, \na \left(\frac{1}{n_a +n} \frac{1}{1+h(\phi)} e^{\phi_a}\right)(n_a +n)(\eps \, \div \up) \cdot  \eps {\na \np}\\
&+ {T^i \int_{\R^3_+} \eta' \left(\frac{1}{1+h(\phi)} e^{\phi_a}\right)(\eps \, \div \up) \,  \eps {\pa_3 \np} }\: + \:  T^i \int_{x_3=0}  \left(\frac{1}{1+h(\phi)} e^{\phi_a}\right)(\eps \, \div \up) \,  \eps {\pa_3 \np} \\
&=: L_1 \: + \: L_2 \: + \: L_3 \: + \: L_4.
\end{align*}
We use the equation satisfied by $\np$ in order to rewrite the $\na [...]$ part in $L_1$. To this end, we take the $\eps \na$ operator on the transport equation for $n$ to get: 
\begin{equation}
(\pa_t  + (u_a +u)\cdot \na )(\eps \na \np) + \na \left[(n_a +n) \, \eps \div \up\right] + \eps\sum_{i=1}^3 (\pa_i(u_a +u) \cdot \na \np) e_i  = \eps \na r_n,
\end{equation}
where $e_1 =(1,0,0)^\top, e_2=(0,1,0)^\top, e_3=(0,0,1)^\top$.
As before the convection term on $(\eps \na \np)$ is very useful. With the usual manipulations, we obtain:
\begin{align*}
L_1 \: \leq \: & - \frac{d}{dt} T^i \int_{\R^3_+} \eta  \left(\frac{1}{n_a +n} \frac{1}{1+h(\phi)} e^{\phi_a}\right) \frac{|\eps {\na \np}|^2}{2}\\
& +T^i \int_{\R^3_+} \eta'  \left(\frac{1}{n_a +n} \frac{1}{1+h(\phi)} e^{\phi_a}\right)  (u_a +u)_3 \frac{|\eps \na \np|^2}{2}  \\
& +  T^i\int_{x_3=0}  \left(\frac{1}{n_a +n} \frac{1}{1+h(\phi)} e^{\phi_a}\right)  (u_a +u)_3 \frac{|\eps \na \np|^2}{2} \\
& + \: C(C_a,M) \big( \| R^r \|_{L^2}^2 + \mu \| \sqrt{\eta'} R^s \|_{L^2}^2 \: + \: \mu \| \sqrt{\eta'} \eps \na \np \|_{L^2}^2 \: + \:  \| \eps \na \np \|_{L^2}^2 \big).
\end{align*} 
By now standard manipulations, we also have 
$$ L_2 \: \le \:  C(C_a,M) \big(  \: \mu \| \sqrt{\eta'} (\eps \na \np, \eps \div \up) \|_{L^2}^2 \: + \:  \| (\eps \na \np, \eps \div \up) \|_{L^2}^2 \big). $$
From the estimates on the $L_i$'s, one can deduce an inequality on $K_3$.  Combining this inequality with \eqref{estimK2K4B} and \eqref{estimK1B}, we end up with 
\begin{equation} \label{estimJ1B}
\begin{aligned}
J_1 \le & -\frac{d}{dt}  \int_{\R^3_+}\eta \frac{n_a +n}{1 + h(\phi)} e^{\phi_a} \frac{(\eps \, \div \up)^2}{2} \:  - \:  \frac{d}{dt} T^i \int_{\R^3_+} \eta  \left(\frac{1}{n_a +n} \frac{1}{1+h(\phi)} e^{\phi_a}\right) \frac{|\eps {\na \np}|^2}{2}\\ 
& +  \int_{\R^3_+} \eta' \frac{n_a +n}{1 + h(\phi)} e^{\phi_a}(u_a + u)_3  \frac{(\eps \, \div \up)^2}{2} \: + \: \int_{x_3=0}  \left[\frac{n_a + n}{1 + h(\phi)} e^{\phi_a} (u_a + u)_3 \frac{(\eps \, \div \up)^2}{2}\right] \\
& +T^i \int_{\R^3_+} \eta'  \left(\frac{1}{n_a +n} \frac{1}{1+h(\phi)} e^{\phi_a}\right)  (u_a +u)_3 \frac{|\eps \na \np|^2}{2} \: +\:  T^i \int_{\R^3_+} \eta' \left(\frac{1}{1+h(\phi)} e^{\phi_a}\right)(\eps \, \div \up) \,  \eps {\pa_3 \np}  \\
& +  T^i\int_{x_3=0}  \left(\frac{1}{n_a +n} \frac{1}{1+h(\phi)} e^{\phi_a}\right)  (u_a +u)_3 \frac{|\eps \na \np|^2}{2} \: + \: 
   T^i \int_{x_3=0}  \left(\frac{1}{1+h(\phi)} e^{\phi_a}\right)(\eps \, \div \up) \,  \eps {\pa_3 \np} \\
&   +  C(C_a,M) \big( \| R^r \|_{L^2}^2 + \mu \| \sqrt{\eta'} R^s \|_{L^2}^2 \: + \: \mu \| \sqrt{\eta'} (\eps \na \np, \eps \na \up) \|_{L^2}^2 \: + \:  \| (\eps \na \np, \eps \na \up) \|_{L^2}^2).
\end{aligned}
\end{equation}

{\bf b. Study of $J_2$}. Let us now work on $J_2$, which has to be treated very carefully. Indeed, a rough $L^2$ estimate only shows that this is a singular term in $1/\eps$, which does not seem small. Instead, we use the transport equation satisfied by $\np$ to express $(n_a+ n) \div \up$ in terms of other quantities. With similar manipulations as before, we obtain:
\begin{equation}
\label{estimJ2B}
\begin{aligned}
J_2 &\leq -\frac{d}{dt} \int_{\R^3_+} \eta \frac{1}{1 + h(\phi)} e^{\phi_a} \frac{|\np|^2}{2} \\
  & + C(C_{a}, M)\big( \|R^r \|_{L^2}^2 + \mu \| \sqrt{\eta'} R^s \|_{L^2}^2  + \mu \| \sqrt{\eta'}  (\np, \up) \|_{L^2}^2 \: + \:  \| (\np, \up) \|_{L^2}^2  \big)\\
&+ \frac{1}{2} \int_{\R^3_+} \eta' \frac{1}{1 + h(\phi)}e^{\phi_a} (u_a + u)_3 |\np|^2 + \frac{1}{2}  \int_{x_3=0} \frac{1}{1 + h(\phi) }e^{\phi_a} (u_a + u)_3 |\np|^2
\end{aligned}
\end{equation}
Observe here that the last two terms are non-positive.  Together with \eqref{estimJ1B} and \eqref{estimJ2B}, the bound \eqref{estimI1B} leads to 
\begin{equation} \label{estimI1Bbis}
\begin{aligned}
I_1 \le & -\frac{d}{dt}  \int_{\R^3_+}\eta \frac{n_a +n}{1 + h(\phi)} e^{\phi_a} \frac{(\eps \, \div \up)^2}{2} \:  - \:  \frac{d}{dt} T^i \int_{\R^3_+} \eta  \left(\frac{1}{n_a +n} \frac{1}{1+h(\phi)} e^{\phi_a}\right) \frac{|\eps {\na \np}|^2}{2} \\ 
& -\frac{d}{dt} \int_{\R^3_+} \eta \frac{1}{1 + h(\phi)} e^{\phi_a} \frac{|\np|^2}{2}  \\
&  + \frac{1}{2} \int_{\R^3_+} \eta' \frac{1}{1 + h(\phi)}e^{\phi_a} (u_a + u)_3 |\np|^2 + \frac{1}{2}  \int_{x_3=0} \frac{1}{1 + h(\phi) }e^{\phi_a} (u_a + u)_3 |\np|^2 \\
& +  \int_{\R^3_+} \eta' \frac{n_a +n}{1 + h(\phi)} e^{\phi_a}(u_a + u)_3  \frac{(\eps \, \div \up)^2}{2} \: + \: \int_{x_3=0}  \left[\frac{n_a + n}{1 + h(\phi)} e^{\phi_a} (u_a + u)_3 \frac{(\eps \, \div \up)^2}{2}\right] \\
& +T^i \int_{\R^3_+} \eta'  \left(\frac{1}{n_a +n} \frac{1}{1+h(\phi)} e^{\phi_a}\right)  (u_a +u)_3 \frac{|\eps \na \np|^2}{2} \: +\:  T^i \int_{\R^3_+} \eta' \left(\frac{1}{1+h(\phi)} e^{\phi_a}\right)(\eps \, \div \up) \,  \eps {\pa_3 \np}  \\
& +  T^i\int_{x_3=0}  \left(\frac{1}{n_a +n} \frac{1}{1+h(\phi)} e^{\phi_a}\right)  (u_a +u)_3 \frac{|\eps \na \np|^2}{2} \: + \: 
   T^i \int_{x_3=0}  \left(\frac{1}{1+h(\phi)} e^{\phi_a}\right)(\eps \, \div \up) \,  \eps {\pa_3 \np} \\
&   +  C(C_a,M) \big( \| R^r \|_{L^2}^2 + \mu \| \sqrt{\eta'} R^s \|_{L^2}^2 \: + \: \mu \| \sqrt{\eta'} (\np, \up,\eps \na \np, \eps \na \up) \|_{L^2}^2 \: + \:  \| (\np, \up,\eps \na \np, \eps \na \up) \|_{L^2}^2).
\end{aligned}
\end{equation}

\medskip
{\bf 2) Treatment of $I_2$}

By using \eqref{subtrace}, we have the straightforward bound:
\begin{equation}
\begin{aligned}
I_2  \: \leq \: &   \frac{1}{2} \int_{\R^3_+} \eta'    \, (n_a + n)  |(u_a+u)_3| |\fp|^2  + \frac{1}{2}  \int_{\R^3_+} \eta'  \, \frac{n_a + n}{|(u_a+u)_3|} |\up_3|^2 \\
  \leq \: &  \frac{1}{2} \int_{\R^3_+} \eta'    \,  \Gamma n^0   |\Gamma (u^0)_3| |\fp|^2  + \frac{1}{2}  \int_{\R^3_+} \eta'  \, \frac{n_a + n}{|(u_a+u)_3|} |\up_3|^2  \: + \:  \mu \int_{\mathbb{R}^3_{+}} \eta' |\fp|^2 \, dx \\
 &   +  C(C_{a}, M) \big( 1 + {\|(u,n)\|_{L^\infty} \over \eps} \big) \|\fp \|_{L^2}^2.
\end{aligned}
\end{equation}
Note that the last term is actually not singular, thanks to \eqref{hypmin}. 

\medskip
We now claim that $\fp$ satisfies the following bound:
\begin{multline} \label{ellipticB}
 \frac{1}{2} \int_{\R^3_+} \eta'    \,  \Gamma n^0   |\Gamma (u^0)_3| |\fp|^2  \leq  \frac{1}{2}  \int_{\R^3_+} \eta'    \,  \frac{|(u_a+u)_3|}{n_a+n} |\np|^2
    \\
  +  C(C_{a}, M)\Big(  \left\|  R^r \right \|_{L^2}^2  + \mu \| \sqrt{\eta'} R^s \|_{L^2}^2 \: + \: \mu \| \sqrt{\eta'} (\np, \fp) \|_{L^2}^2 \: + \: 
   \|  (\np, \fp) \|_{L^2}^2 \Big).
\end{multline}
 The term   $\frac{1}{2}  \int_{\R^3_+} \eta'    \,  \frac{|(u_a+u)_3|}{n_a+n} |\np|^2$
 has the bad sign but will be  compensated by other terms  later. 
 
 To prove  estimate \eqref{ellipticB},  we write the Poisson equation satisfied by $\fp$:
$$ -\eps^2 \Delta \fp + {e^{- \phi_{a}}}( 1  + h(\phi) \fp  = -\np  - r_{\phi}.$$
We multiply by $m \fp$, with the weight $ m \: := \:  \eta'     |(\Gamma u^0)_3|$. By a  standard energy estimate:  
 $$ \| \sqrt{m}\, \eps \nabla \fp \|_{L^2}^2 + \int_{\mathbb{R}^3} m e^{- \phi_{a}}( 1 + h(\phi)) | \fp |^2 \, dx\leq 
  \int_{\mathbb{R}^3} m (|\np| + |r_{\phi}|) |\fp|\, dx +  \eps^2 \int_{\mathbb{R}^3} | \nabla^2 m|   |\fp|^2.$$
We use the Young inequality for the first term, resulting in 
$$   \int_{\mathbb{R}^3} m |\np| \, |\fp| \: \le \:   { 1 \over 2}\int_{\mathbb{R}^3}
   \eta' \left|{\Gamma (u^0)_{3} \over \Gamma n^0 } \right| \, |\np|^2  \: + \:  \frac{1}{2}\int_{\mathbb{R}^3} \eta'   ( \Gamma n^0   |\Gamma (u^0)_3 |) |\fp |^2. $$

Next, we observe that  thanks to \eqref{hypsource}, we have 
$$  \int_{\R^3_{+}} m |r_{\phi}|\, |\fp|  \leq  C(C_{a}, M) \int_{\R^3_{+}} {|r_{\phi}| \over \eps}  | \fp |
 \leq C(C_{a}, M)  \big( \| R^r \|_{L^2}^2 +  \mu \|\sqrt{\eta'} R^s\|_{L^2}^2 \, +  \, \mu \| \sqrt{\eta'} \fp \|_{L^2}^2 + \| \fp \|_{L^2}^2 \big).$$
By the choice of the weight function, we have
$$ \eps^2 \, |\nabla^2 m| \lesssim   \left(\mu^2 + \delta + \eps^2 \right)m  \lesssim \mu^2 m,$$
which implies  
$$\eps^2 \int_{\mathbb{R}^3} | \nabla^2 m|   |\fp|^2 \: \lesssim \: \mu^2 \| \sqrt{m}\fp  \|_{L^2}^2   \: \le \: C_a \: \mu^2  \| \sqrt{\eta'} \fp \|_{L^2}^2. $$
 Finally, since  $n^0 = e^{- \phi_{0}},$  relations \eqref{app0}, \eqref{subtrace} give
\begin{align*}
 \int_{\mathbb{R}^3} m  e^{- \phi_{a}}( 1 + h(\phi)) | \fp |^2 \,  & \geq \int_{\mathbb{R}^3}  \eta'   (\Gamma n^0  |\Gamma (u^0)_3 | -  C_{a} (\delta + \eps))  |\fp|^2 \:  - \:   C_a \int_{\mathbb{R}^3}   ( 1 + {\| \phi \|_{L^\infty} \over \eps})  |\fp|^2 \\
 & \geq \int_{\mathbb{R}^3}  \eta'   (\Gamma n^0  |\Gamma (u^0)_3 ||\fp|^2   \: - \: C(C_a,M) \big( \mu \|  \sqrt{\eta'}  \fp \|_{L^2}^2  \: + \: \| \fp \|_{L^2}^2).  
 \end{align*}
  We thus get  that
  \begin{multline} \label{fpbynp}
  \frac{1}{2}\int_{\mathbb{R}^3} \eta'   ( \Gamma n^0   |\Gamma (u^0)_3 |) |\fp |^2    \leq { 1 \over 2}\int_{\mathbb{R}^3}
   \eta' \left|{\Gamma (u^0)_{3} \over \Gamma n^0 } \right| \, |\np|^2 \,  \\
   + C(C_{a}, M)\Big(  \left\|  R^r \right \|_{L^2}^2  + \mu \| \sqrt{\eta'} R^s \|_{L^2}^2 \: + \: \mu \| \sqrt{\eta'} \fp \|_{L^2}^2 \: + \: 
   \|  \fp \|_{L^2}^2 \Big).
   \end{multline}
   Applying \eqref{subtrace2} to the first term at the r.h.s., we obtain the desired inequality \eqref{ellipticB}.It follows that 
\begin{equation} \label{estimI2B}
\begin{aligned}
I_2 \: \le \: & \frac{1}{2}  \int_{\R^3_+} \eta'    \,  \frac{|(u_a+u)_3|}{n_a+n} |\np|^2  + \frac{1}{2}  \int_{\R^3_+} \eta'  \, \frac{n_a + n}{|(u_a+u)_3|} |\up_3|^2 \\
   & + C(C_{a}, M)\Big(  \left\|  R^r \right \|_{L^2}^2  + \mu \| \sqrt{\eta'} R^s \|_{L^2}^2 \: + \: \mu \| \sqrt{\eta'} \fp \|_{L^2}^2 \: + \: 
   \|  \fp \|_{L^2}^2 \Big).
   \end{aligned}
   \end{equation}

  \bigskip
  {\bf 3) Conclusion of estimate B.}  We can then inject \eqref{estimI1Bbis} and \eqref{estimI2B} into \eqref{estimcalIB}. We get
  \begin{equation}
\begin{aligned}
\I + \frac{d}{dt}  \int_{\R^3_+}\eta \Big( \frac{n_a +n}{1 + h(\phi)} e^{\phi_a}  \frac{(\eps \, \div \up)^2}{2} 
+ \ \left(\frac{T^{i}}{n_a +n} \frac{1}{1+h(\phi)} e^{\phi_a}\right) \frac{|\eps {\na \np}|^2}{2}  
+   \frac{1}{1 + h(\phi)} e^{\phi_a} \frac{|\np|^2}{2}  \Big)\\
\leq \int_{\R^3_+} \eta' \frac{n_a +n}{1 + h(\phi)} e^{\phi_a}  (u_a + u)_3  \frac{(\eps \, \div \up)^2}{2} \\
+  T^i \int_{\R^3_+} \eta'  \left(\frac{1}{n_a +n} \frac{1}{1+h(\phi)} e^{\phi_a}\right)  (u_a +u)_3 \frac{|\eps \na \np|^2}{2}  
+ T^i \int_{\R^3_+} \eta' \left(\frac{1}{1+h(\phi)} e^{\phi_a}\right)(\eps \, \div \up) \,  \eps {\pa_3 \np} 
\\  + { 1 \over 2} \int_{\R^{3}_{+}} \eta'  \Big({  e^{\phi_{a}} \over 1 + h(\phi) }  (u_{a} + u )_{3}\, |\np|^2   +  { |(u_{a} + u)_{3} | \over n_{a} + n} \, |\np |^2 
+  \, \frac{n_a + n}{|(u_a+u)_3|} |\up_3|^2\Big)  \\
 + \mu \| \sqrt{\eta'} \, |\eps \, \nabla \up \|_{L^2}^2 \: + \: \mu \|  \sqrt{\eta'} \, \fp \|_{L^2}^2   + B.T. + \mathcal{G} + C(C_{a}, M ) \mu \| \sqrt{\eta'} (\np, \up,  \eps \nabla \np,  \eps \div \up) \|_{L^2(\R^3_{+})}^2.
\end{aligned}
\end{equation}
Here $B.T.$ denotes the boundary contributions, 
\begin{multline} 
B.T. :=  \int_{x_{3}= 0}  \Big(  {1 \over 2} e^{\phi_{a}} (n_{a} + n)  (u_{a} + u)_{3}  (\eps \div \up )^2 + {T^i \over 2 } { e^{\phi_{a}  }  \over( n_{a} + n)  (1+h(\phi)} ( u_{a} + u)_{3}
 | \eps \nabla \np |^2   \\+ T^i  { e^{\phi_{a}} \over (1 + h(\phi))} \eps \div \up \, \eps \partial_{3} \np + { 1 \over 2} { e^{\phi_{a}} \over 1 + h(\phi)} ( u_{a} + u)_{3} | \np |^2 \Big)
 \end{multline}
 and $\mathcal{G}$ stands for a good term, that is
\begin{multline}
\label{Good}
 \mathcal{G} \leq  C(C_{a}, M ) \Big( \|(\np, \up, \eps \nabla \np, \eps \nabla \up , \fp, \eps \nabla \fp)\|_{L^2(\mathbb{R}^3)}^2 
    + \|  R^r \|_{L^2(\mathbb{R}^3)}^2 + \mu  \|\sqrt{\eta'} R^s \|_{L^2(\mathbb{R}^3)}^2 \Big).
     \end{multline}
Let us stress that some singular terms vanish at leading order : indeed, the relation $e^{\phi_0} = 1/n_0$ implies 
$$ { 1 \over 2} \int_{\R^{3}_{+}} \eta'  \Big({  e^{\phi_{a}} \over 1 + h(\phi) }  (u_{a} + u )_{3}\, |\np|^2   +  { |(u_{a} + u)_{3} | \over n_{a} + n} \, |\np |^2  \: \le \:  C(C_a,M) \, \left( \mu \| \sqrt{\eta'} \np \|_{L^2}^2 + \| \np \|_{L^2}^2 \right). $$

\medskip
By combining  the previous estimate and Lemma \ref{lem+}, we conclude that we have

\begin{equation} \label{almostBfinal}
\begin{aligned}
\frac{d}{dt}  \Big[ \int_{\R^3_+}  \eta \, (n_a + n) \frac{|\up|^2}{2} +  \int_{\R^3_+}  \eta  \frac{ T^{i} |\np|^2}{2 (n_a + n) } + \int_{\R^3_+}\eta \frac{1}{1 + h(\phi)} e^{\phi_a} (n_a +n ) \frac{(\eps \, \div \up)^2}{2} \\
+ T^i \int_{\R^3_+} \eta  \left(\frac{1}{n_a +n} \frac{1}{1+h(\phi)} e^{\phi_a}\right) \frac{|\eps {\na \np}|^2}{2}  
+  \int_{\R^3_+} \eta \frac{1}{1 + h(\phi)} e^{\phi_a} \frac{|\np|^2}{2} \Big] \\
+ \frac{1}{2} \int_{\R^3_+}  \eta' \, Q_1^B(\np,\up) + \frac{1}{2} \int_{\R^3_+}  \eta' \, Q_2^B(\eps \, \div \up, \eps \nabla  \np) 
\\
+\frac{1}{2} \int_{x_3=0} Q^0(\np,\up) + \frac{1}{2}  \int_{x_3=0} Q^B_{2}(\eps \div \up, \eps \nabla \np) \leq \mu \| \sqrt{\eta'} \, |\eps \, \nabla \up \|_{L^2}^2 \: + \:  \mu \|  \sqrt{\eta'} \, \fp \|_{L^2}^2    \: + \:   \mathcal{G}. 
\end{aligned}
\end{equation}

with $Q_1^B, Q_2^B$ are quadratic forms determined by the two following symmetric matrices:
$$
M_1^B = \Gamma  \begin{bmatrix}  T^i {|(u_{a} + u)_{3}|\over n^a+ n}  & -T^i  e^\top\\ - T^i e & (n_a+ n)\left(|(u_{a} + u)_3| Id_{3}-\frac{1}{|(u_{a} + u)_{3}|} e \, e^\top\right)  \end{bmatrix} - C  \mu I_4, $$
$$ M_2^B = \Gamma  \begin{bmatrix} {e^{\phi_{a} }\over 1 + h(\phi)} (n_{a} + n) |(u_{a} + u)|_{3} & -\frac{T^i e^{\phi_{a}} }{1+ h(\phi)}  e^\top  \\ -\frac{T^i e^{\phi_{a}} }{1+ h(\phi)}  e  & T^i\frac{ e^{\phi_a} |(u_{a} + u)_{3}|}{(1+h(\phi))(n_a+ n)} Id_{3} \end{bmatrix}  -  C \mu I_4.
$$
 We recall that $\mu>0$ is a parameter that can be taken arbitrarily small. Observe that $M_1^B$ is positive for $\mu$ sufficiently small, as soon as the Bohm condition \eqref{hypbohm} is satisfied.
   We also get  that  $M_2^B$ is  positive thanks to  the Bohm condition.

To obtain \eqref{Bfinal}, we still have to get rid of the term   $\mu \|  \sqrt{\eta'} \, \fp \|_{L^2}^2$ at the r.h.s. of \eqref{almostBfinal}. But as the quadratic form $Q_1^B$ controls $\| \sqrt{\eta'} \np \|_{L^2}^2$, it also controls  $\| \sqrt{\eta'} \fp \|_{L^2}^2$ thanks to  \eqref{fpbynp}, up to add some good term and some $\mu \| \sqrt{\eta'} \np \|_{L^2}^2$ term at the r.h.s. By taking $\mu$ small enough, this singular term is controlled by the l.h.s., which gives \eqref{Bfinal}.

\smallskip

This estimate by itself allows us almost to conclude; the only problematic term comes from the singular contribution $\int_{\R^3_+} \eta' \, |\eps \, \nabla  \up|^2$ in the right hand side  which involves a full derivative of $\up$, and that will be treated below.

\bigskip

{\bf C. The third energy estimate }

\bigskip

The previous computations suggest that we also need an estimate of a full $\eps$-derivative of $\up$.
Note that we can not work  with the $\curl$ of $\up$ (together with the previous estimate on the divergence), because we would not be able to control  the trace of $\up$ on the boundary $\{x_3=0\}$. 
 
 We shall prove that 
 \begin{multline}
 \label{Cfinal}
  \| (\np, \up, \eps \nabla \np, \eps \nabla \up )\|_{L^\infty_{T} L^2(\mathbb{R}^3_{+})}^2 + \| \sqrt{ \eta '} (\np, \up, \eps \nabla \np, \eps \nabla \up ) \|_{L^2_{T} L^2 (\mathbb{R}^3_{+})}^2 + \| \Gamma (\np, \up, \eps \nabla \np, \eps \nabla \up ) \|_{L^2_{T} L^2 (\mathbb{R}^2)} \\
   \leq  C(C_{a}, M) \Big( \|(\np, \up, \eps \nabla \np, \eps \nabla \up)(0)\|_{L^2(\mathbb{R}^3_{+})}^2 + \big\| \sqrt{\eta'} \big(\np, \up, \fp,  \eps \nabla \np, \eps \div \up, \eps \nabla \fp\big)\|_{L^2_{T}L^2(\mathbb{R}^3_{+})}^2 \\
  +   \| \Gamma ( \np, \up, \eps\partial_{x_{3}} \fp, \eps \nabla \np, \eps \div \up) \|_{ L^2_{T}L^2(\mathbb{R}^2)}^2 
    + 
    \|(\np, \up, \eps \nabla \np, \eps \nabla \up , \fp, \eps \nabla \fp)\|_{L^2_{T}L^2(\mathbb{R}^3)}^2 
    \\+ \| R^r \|_{L^2_{T}L^2(\mathbb{R}^3)}^2 + \mu \| \sqrt{\eta'} R^s \|_{L^2 _{T}L^2(\R^3_{+})}^2 \Big).
 \end{multline}
 
 \medskip
We shall use  again  the estimate of Lemma \ref{lem+}, but through a different approach of the integral $\I$.  The idea will be to combine
 the estimate of Lemma \ref{lem+}  with an estimate on the $\eps-$derivatives of the system to cancel the singular term through the Poisson equation.
 
Let us first  first apply  $\eps$-derivatives on the equation on the velocity field $\up$: for $i=1,2,3$, this yields
\begin{multline}
\label{epsder1}
  \pa_t (\eps \,\pa_i \up) + (u_a + u)  \cdot \na (\eps \,\pa_i \up) + \eps \,\pa_i (u_a + u)  \cdot \na \up     
 +  T^i  \frac{\na (\eps \,\pa_i \np)}{n_a + n}   \\ +  T^i   \eps \,\pa_i\left(\frac{1}{n_a + n}\right) \na \np   =  \eps \,\pa_i \na \fp + \eps \,\pa_i r_u.
\end{multline}
 
 We then take the scalar product with $\eta \, \eps \,\pa_i \up$ and make the sum in $i$. We find
 \begin{equation}
 \label{epsnablaup1}
\begin{aligned}
 & \sum_{i=1}^3\Big( \frac{1}{2} \frac{d}{dt} \int_{\R^3_+} \eta |\eps \, \pa_i \up|^2  -  \frac{1}{2} \int_{\R^3_{+}} \eta' \, (u_a+ u)_3 \,  |\eps \, \pa_i \up|^2  -  \frac{1}{2} \int_{x_3=0}  \, (u_a+ u)_3 \,  |\eps \, \pa_i \up|^2\Big)  \\
 & = I_1 +  I_2 + \mathcal{G} + \mathcal{S}
  \end{aligned}
\end{equation}
where 
\begin{equation*}
I_1 \: := \:  - \sum_{i=1}^3\Big( \int_{\R^3_+} \eta \,  T^i   \frac{\na (\eps \,\pa_i \np)}{n_a + n} \cdot \eps \,\pa_i \up\Big), \quad 
I_2 \: := \:  \sum_{i=1}^3 \int_{\R^3_+} \eta \, \eps \,\pa_i \na \fp \cdot \eps \,\pa_i \up.
\end{equation*}
 The term $\mathcal{S}$ above will refer from now and on  to  any admissible singular term,  that is a quantity which is singular but  bounded as 
 \begin{equation}
 \label{Sronddef} | \mathcal{S}| \leq  \mu \| \sqrt{\eta'} \eps \nabla \up \|_{L^2}^2 +  C\big\| \sqrt{\eta'} \big(\np, \up, \fp,  \eps \nabla \np, \eps \div \up, \eps \nabla \fp\big)\|_{L^2}^2.
 \end{equation}
 The first term in $\mathcal{S}$ will be absorbed by positive terms in the left hand side for $\mu$ small enough,  while the second term
  will be absorbed by the left hand sides of Estimates A and B. 
The term $\mathcal{G}$ above will refer from now and on to a good term, that is satisfying \eqref{Good}.

\medskip
 {\bf 1) Treatment of $I_1$.}
We use the identity: 
 \begin{equation}
 \label{epsnablaup2}
\begin{aligned}
I_1 =   \int_{\R^3_+} \eta \,  T^i   \frac{ (\eps \,\pa_i \np)}{n_a + n} \div( \eps \,\pa_i \up)  +  \int_{\R^3_+} \eta \,  T^i   (\eps \,\pa_i \np)\na\left(\frac{ 1}{n_a + n}\right) \cdot \eps \,\pa_i \up \\
 {+ \int_{\R^3_+} \eta' \,  T^i  \frac{ (\eps \,\pa_i \np)}{n_a + n}  \eps \,\pa_i \up_3 } + \int_{x_3=0}  \,  T^i  \frac{ (\eps \,\pa_i \np)}{n_a + n}  \eps \,\pa_i \up_3.
  \end{aligned}
\end{equation}

We shall now use the transport equation satisfied by $\eps \, \pa_i \np$, which reads:
\begin{equation}
(\pa_t  + (u_a +u)\cdot \na )(\eps \, \pa_i \np) + \eps \, \pa_i(n_a +n) \,  \div \up + (n_a +n) \,  \div (\eps \, \pa_i\up) 
+  \eps \, \pa_i(u_{a} + u )\cdot \nabla \np   = \eps \na r_n.
\end{equation}
Therefore, we have, relying on the standard manipulations:
 \begin{equation}
 \label{epsnablaup3}
\begin{aligned}
 &\int_{\R^3_+} \eta \,  T^i   \frac{ (\eps \,\pa_i \np)}{n_a + n} \div( \eps \,\pa_i \up)  = 
 - \frac{1}{2} \frac{d}{dt} \int_{\R^3_{+}} \eta \frac{T^i}{(n_a + n)^2}  |\eps \pa_i \np|^2  \\ 
  &  +   \frac{1}{2} \int_{\R^3_{+}} \eta' \, \frac{T^i(u_a + u)_3}{(n_a+n)^2} |\eps \pa_i \np|^2   +  \frac{1}{2} \int_{x_3=0}  \,\frac{T^i(u_a + u)_3}{(n_a+n)^2} |\eps \pa_i \np|^2  + \mathcal{G} + \mathcal{S}.
    \end{aligned}
\end{equation}
We end up with 
\begin{equation} \label{estimI1C}
\begin{aligned}
I_1 \: =  & \: - \frac{1}{2} \frac{d}{dt} \int_{\R^3_{+}} \eta \frac{T^i}{(n_a + n)^2}  |\eps \pa_i \np|^2  \\ 
  &  \: +   \frac{1}{2} \int_{\R^3_{+}} \eta' \, \frac{T^i(u_a + u)_3}{(n_a+n)^2} |\eps \pa_i \np|^2   +  \frac{1}{2} \int_{x_3=0}  \,\frac{T^i(u_a + u)_3}{(n_a+n)^2} |\eps \pa_i \np|^2  \\
&  \: {+ \int_{\R^3_+} \eta' \,  T^i  \frac{ (\eps \,\pa_i \np)}{n_a + n}  \eps \,\pa_i \up_3 } + \int_{x_3=0}  \,  T^i  \frac{ (\eps \,\pa_i \np)}{n_a + n}  \eps \,\pa_i \up_3 \: + \:  \mathcal{G} + \mathcal{S}.
\end{aligned}
\end{equation}

\medskip
 {\bf 2) Treatment of $I_2$.}
After some integration by parts, we can rewrite $I_2$ as 
 \begin{equation}
I_2   \: =  \:  - \sum_{i=1}^3 \left( \int_{\R^3_+} \eta \, \eps \,\pa_i  \fp \,  \eps \,\div  \pa_i \up + { \int_{\R^3_+}  \eta' \, \eps \,\pa_i  \fp \,  \eps \,  \pa_i \up_3} + \int_{x_3=0}  \, \eps \,\pa_i  \fp \,  \eps \,  \pa_i \up_3 \right).
\end{equation}
 Then we have
  \begin{equation}
  \label{derivu}
\begin{aligned}
 - \sum_{i=1}^3  \int_{\R^3_+} \eta \, \eps \,\pa_i  \fp \,  \eps \,\div  \pa_i \up  & =    \int_{\R^3_+} \eta \, \eps^2 \,\Delta  \fp \,   \,\div   \up  + {  \int_{\R^3_+} \eta' \, \eps \,\pa_{3}  \fp \,   \eps \,\div   \up} +  \int_{x_3=0}  \, \eps \,\pa_{3}  \fp \,   \eps \,\div   \up\\
&= L + \mathcal{S} + \mathcal{B}
  \end{aligned}
\end{equation}
where we  set 
\begin{equation}
\label{Ldef} 
L := \int_{\R^3_+} \eta \, \eps^2 \,\Delta  \fp \,   \,\div   \up.
\end{equation}
Similarly to $\mathcal{B}$ stands for an admissible boundary term which is bounded by 
   boundary terms in the left hand sides of Estimates A and B, that is to say
   $$ | \mathcal{B} | \leq  C \| \Gamma ( \np, \up, \eps\partial_{x_{3}} \fp, \eps \nabla \np, \eps \div \up) \|_{L^2(\mathbb{R}^2)}^2.$$
Hence, 
\begin{equation} \label{estimI2C}
I_2 \: = \:  - \sum_{i=1}^3  \left( { \int_{\R^3_+}  \eta' \, \eps \,\pa_i  \fp \,  \eps \,  \pa_i \up_3} + \int_{x_3=0}  \, \eps \,\pa_i  \fp \,  \eps \,  \pa_i \up_3 \right) \: +  \: L + \mathcal{S} + \mathcal{B}.
\end{equation}
Putting together \eqref{estimI1C}, \eqref{estimI2C} and \eqref{epsnablaup1}, we deduce 
\begin{equation} \label{epsnablaupbis}
\begin{aligned}
& \frac{1}{2} \frac{d}{dt} \int_{\R^3_+} \eta |\eps \, \na \up|^2 \: + \:   \frac{1}{2} \frac{d}{dt} \int_{\R^3_{+}} \eta \frac{T^i}{(n_a + n)^2}  |\eps \na \np|^2   \\ 
\le & \: \frac{1}{2} \int_{\R^3_{+}} \eta' \, (u_a+ u)_3 \,  |\eps \, \na \up|^2  +  \frac{1}{2} \int_{x_3=0}  \, (u_a+ u)_3 \,  |\eps \, \na \up|^2\Big) +   \frac{1}{2} \int_{\R^3_{+}} \eta' \, \frac{T^i(u_a + u)_3}{(n_a+n)^2} |\eps \na \np|^2   \\
&  +  \frac{1}{2} \int_{x_3=0}  \,\frac{T^i(u_a + u)_3}{(n_a+n)^2} |\eps \na \np|^2   \: {+ \int_{\R^3_+} \eta' \,  T^i  \frac{ (\eps \,\na \np)}{n_a + n} \cdot  \eps \,\na \up_3 } + \int_{x_3=0}  \,  T^i  \frac{ (\eps \,\na \np)}{n_a + n} \cdot  \eps \,\na \up_3 \\
& -   \left( { \int_{\R^3_+}  \eta' \, \eps \,\na \fp \cdot  \eps \,  \na \up_3} + \int_{x_3=0}  \, \eps \,\na  \fp \, \cdot  \eps \,  \na \up_3 \right)
 + \:  L +   \mathcal{G} + \mathcal{S}  + \mathcal{B}.
\end{aligned}
\end{equation}

\medskip
{\bf 3) Treatment of $\I$}. We recall that the  $\I$ term defined in Lemma \ref{lem+} can be written as 
\begin{align*} 
\I &= -  \int_{\R^3_{+}} \eta \,  \fp \, \div \big( (n_{a}+ n)  \up \big) - { \int_{\R^3_{+}}  \eta' \, \fp \,  \big( (n_{a}+ n)  \up_3 \big) }\\
&    := J +  \mathcal{G}+ \mathcal{S}.
\end{align*} 
with $J := -  \int_{\R^3_{+}} \eta \,  \fp \, (n_{a}+ n)  \div \up$. 
We shall study $\I$ (and $J$) in a different way than that followed for the first two energy estimates. The idea is to combine $J$ with $I_2$ through the term $L$ (see \eqref{Ldef}-\eqref{estimI2}). Therefore, we decompose $J$ as follows: 
 \begin{equation}
\begin{aligned}
  J &= \int_{\R^3_{+}} \eta \,  \div \up \Big( \eps^2 \,\Delta  \fp - \fp \, (n_{a}+ n) \Big) - L \\
   &=   \int_{\R^3_{+}} \eta \,  \div \up \Big( \eps^2 \,\Delta  \fp -  e^{-\phi_a} \fp (  1 + h(\phi) )  \Big) +   \int_{\R^3_{+}} \eta \,  \div \up \, \fp \Big(  e^{-\phi_a}  (  1 + h(\phi) ) -  (n_{a}+ n) \Big) - L\\
 & =: J_1+J_2 - L. 
  \end{aligned}
\end{equation}
Since  by construction of the approximation solution, we have $e^{-\phi^0}= n^0$, $J_2$ can be bounded by:
$$
J_2 \: \le \: \mu  \int_{\R^3_{+}} \eta' \, ((\eps \, \div \up)^2 + \fp^2) + \mathcal{G}.
$$
Considering $J_1$, by the Poisson equation, we have:
\begin{equation}
\begin{aligned}
J_1 = \int_{\R^3_{+}} \eta \,  \div \up \, \np +  \int_{\R^3_{+}} \eta \,  \div \up \,  r_\phi.
  \end{aligned}
\end{equation}
One can rely on the transport equation satisfied by $\np$ to replace $\div \up$, which yields:
\begin{equation}
\begin{aligned}
 \int_{\R^3_{+}} \eta \,  \div \up \, \np &=  \int_{\R^3_{+}} \eta \,  \, \frac{\np}{n_a + n} \left( - \pa_t  \np - (u_{a} + u )\cdot \nabla \np    + r_n \right),
  \end{aligned}
\end{equation}
which we shall recast, after the usual manipulations, as
\begin{equation*}
\begin{aligned}
 \int_{\R^3_{+}} \eta \,  \div \up \, \np = - \frac{1}{2} \frac{d}{dt} \int_{\R^3_{+}} \eta \,  \, \frac{\np^2}{n_a + n} + \frac{1}{2} \int_{\R^3_{+}} \eta' \, \frac{(u_a+ u)_3}{n_a+n} \,  \np^2  + \frac{1}{2} \int_{x_3=0} \, (u_a+ u)_3 \,  \np^2 \\
+ \mu \int_{\R^3_{+}} \eta' \, (\np^2 + |\up|^2) + C_a  \int_{\R^3_{+}} \eta \, \np  r_n.
  \end{aligned}
\end{equation*}
We obtain 
\begin{equation} \label{estimIC}
\begin{aligned}
\I \: \le \: & -L \: + \:  - \frac{1}{2} \frac{d}{dt} \int_{\R^3_{+}} \eta \,  \, \frac{\np^2}{n_a + n} + \frac{1}{2} \int_{\R^3_{+}} \eta' \, \frac{(u_a+ u)_3}{n_a+n} \,  \np^2  + \frac{1}{2} \int_{x_3=0} \, (u_a+ u)_3 \,  \np^2 \\ 
& + \mathcal{G} \: + \: \mathcal{S}. 
\end{aligned}
\end{equation}

\medskip
Consequently, by combining  \eqref{epsnablaupbis} and \eqref{estimIC}, we obtain
\begin{equation}
\begin{aligned}
\label{estimationC1}
\I  +  \frac{1}{2} \frac{d}{dt} \int_{\R^3_+} \eta\Big(  |\eps \, \nabla \up|^2 +  \frac{T^i}{(n_a + n)^2}  |\eps \na \np|^2 
+\frac{\np^2}{n_a + n} \Big)\\
\leq  \frac{1}{2} \int_{\R^3_{+}} \eta' \, (u_a+ u)_3 \,  |\eps  \nabla \up|^2 +   \frac{1}{2} \int_{\R^3_{+}} \eta'  T^i \,\frac{(u_a + u)_3}{(n_a+n)^2} |\eps \na \np|^2 \\
+ \frac{1}{2} \int_{\R^3_{+}} \eta' \, \frac{(u_a+ u)_3}{n_a+n} \,  \np^2 -
\int_{\R^3_+}  \eta' \, \eps \,\na  \fp \, \cdot  \eps \,  \na \up_3 
+  \int_{\R^3_+} \eta' \,  T^i  \frac{ (\eps \,\na \np)}{n_a + n}  \cdot \eps \,\na \up_3 \\
 + B.T.+   \mathcal{G}+ \mathcal{S} + \mathcal{B}.
  \end{aligned}
\end{equation}
Here $B.T.$ denotes the important  boundary contributions:
$$ B.T.= \int_{x_{3}= 0}\Big( -  \eps \partial_{3} \fp \, \eps \partial_{3} \up + { T^{i} \over n_{a} + n  } \eps \nabla \np \cdot \eps \nabla \up_{3}  + 
 { 1 \over 2 } T^i { ( u_{a} + u)_{3} \over (n_{a} + n)^2} | \eps \nabla \np |^2  + { 1 \over 2 } (u_{a} + u)_{3} | \np |^2. $$

\medskip

 {\bf 3) Conclusion of estimate C.}
 
 By combining, \eqref{estimationC1} and Lemma \ref{lem+}, we obtain
 
\begin{equation}
\begin{aligned}
\frac{d}{dt}  \int_{\R^3_+}  \eta  \Big(  (n_a + n) \frac{|\up|^2}{2} +   \frac{ T^{i} |\np|^2}{2 (n_a + n) }  +  \frac{1}{2}\ |\eps \, \partial \up|^2  
+ \frac{1}{2}\frac{T^i}{(n_a + n)^2}  |\eps \na \np|^2  +  \frac{\np^2}{n_a + n} 
\Big) \\
+ \frac{1}{2} \int_{\R^3_+}  \eta' \, Q^0(\np,\up) + \frac{1}{2} \int_{\R^3_+}  \eta' \, \big(Q^C(\eps \, \na \np, \eps \, \na \up_3) + |(u_{a} + u)_{3}| \, |\eps \nabla (u_{1}, u_{2})|^2\big) \\
+ \frac{1}{2}\int_{x_3=0} Q^0(\np,\up_3) + \frac{1}{2}\int_{x_3=0} Q^C(\eps \, \na \np, \eps \, \na \up_3)   \\
 \leq  -\int_{\R^3_+}  \eta' \, \eps \,\na  \fp \, \cdot  \eps \,  \na \up_3 -  \int_{x_{3}= 0}  \eps \partial_{3} \fp \, \eps \partial_{3} \up_{3} 
 + \mathcal{G} + \mathcal{S} + \mathcal{B}.
\end{aligned}
\end{equation}
where  $Q^C$ is the quadratic form of the  symmetric matrix:
$$
M_C = \Gamma  \begin{bmatrix} T^i\frac{|u_{a,3}|}{n_a^2} Id_{3}&- \frac{T^i}{n_a} Id_{3}\\  -\frac{T^i}{n_a} Id_{3} &|u_{a,3}| Id_{3} \end{bmatrix} - \mu I_6
$$
which is positive thanks to  the Bohm condition \eqref{hypbohm}.

 To conclude,  we can handle the two first terms in the right hand side of the above estimates by using the Young inequality.
  Indeed,  we write
  $$ \Big|  \int_{\R^3_+}  \eta' \, \eps \,\na  \fp \, \cdot  \eps \,  \na \up_3 \Big| \leq {\tilde \mu   \over 2} \| \sqrt{\eta'} \eps \nabla \up_{3} \|_{L^2}^2 + { 1 \over 2 \tilde \mu}
   \| \sqrt{\eta'} \eps \nabla \fp \|_{L^2}^2$$
    and for $\tilde \mu$ sufficiently small, the term  $\| \sqrt{\eta'} \eps \nabla \up_{3} \|_{L^2}^2$ can be absorbed in the left hand side since $Q^C$
     is positive while the other term can be incorporated in $ \mathcal{S}$. We proceed in the same way for the boundary term by writing
     $$ \Big|  \int_{x_{3}= 0}  \eps \partial_{3} \fp \, \eps \partial_{3} \up_{3}  \Big| \leq {\tilde \mu  \over 2} \| \eps \partial_{3} \up \|_{L^2(\mathbb{R}^2)}^2
      + { 1 \over 2 \tilde \mu}  \| \eps \partial_{3} \fp \|_{L^2(\mathbb{R}^2)}$$
       and we absorb the first term in the left hand side by the positivity of $Q^C$ while the other term can be incorporated in $\mathcal{B}$.
       
        To obtain \eqref{Cfinal}, it suffices  to observe that the term $\mu \| \sqrt{\eta'} \eps \nabla \up \|_{L^2}^2$ can be absorbed in the
         left hand side for $\mu$ sufficiently small and to integrate in time.


\bigskip
{\bf D.  End of the proof of Proposition \ref{propL2}}

 We can first   combine estimates \eqref{Afinal} and \eqref{Bfinal} in the following way. We consider  $ \eqref{Bfinal} +  \epsilon \eqref{Afinal}$ with $\epsilon$
 fixed  sufficienly small (independently of the other involved parameters $\mu$ and $\eps$) so that  the singular  term
   $ \| \sqrt{\eta'} (\eps \nabla \np, \eps \div \up) \|_{L^2_{T} L^2(\mathbb{R}^3_{+})}^2$ in the right hand side  of \eqref{Afinal} can be absorbed by the left hand side
    of \eqref{Bfinal}. This yields
  \begin{multline}
  \label{Dfinal1}
  \|(\np, \up, \fp,  \eps \nabla \np, \eps \div \up, \eps \nabla \fp)\|_{L^\infty_{T}L^2(\mathbb{R}^3_{+})}^2 + 
     \| \sqrt{\eta'}(\np, \up, \fp, \eps \nabla \np, \eps \div \up,  \eps \nabla \fp) \|_{L^2_{T} L^2(\mathbb{R}^3_{+})}^2  +   \\ 
  + \|\Gamma ( \np,  \up,  \eps \nabla \np, \eps \div \up, \eps \partial_{x_{3}} \fp )\|_{L^2_{T}L^2(\mathbb{R}^2)}^2 \\
   \leq C(C_{a}, M) \Big( \|(\np, \up, \fp, \eps \nabla \np, \eps \nabla \up,  \eps \nabla \fp)(0)\|_{L^2(\mathbb{R}_{+}^3)}^2 
    + \mu   \|  \sqrt{\eta'} \eps \nabla u \|_{L^2_{T}L^2(\mathbb{R}^3_{+})}^2 + \
    \\+  \|(\np, \up, \eps \nabla \np, \eps \nabla \up , \fp, \eps \nabla \fp)\|_{L^2_{T}L^2(\mathbb{R}^3_{+})}^2 
     + \| R^r \|_{L^2_{T}L^2(\mathbb{R}^3_{+})}^2 + \mu \| \sqrt{\eta'} R^s \|_{L^2_{T}L^2(\mathbb{R}^3_{+})}^2
    \Big).
 \end{multline}
   To handle the singular term $ \mu   \|  \sqrt{\eta'} \eps \nabla u \|_{L^2_{T}L^2(\mathbb{R}^3_{+})}^2$ in the right hand side of  \eqref{Dfinal1}, 
    we  consider $ \eqref{Dfinal1} + \sqrt{\mu} \eqref{Cfinal}$. This yields
    \begin{multline}
  \label{Dfinal2}
  \|(\np, \up, \fp,  \eps \nabla \np, \eps \div \up, \eps \nabla \fp)\|_{L^\infty_{T}L^2(\mathbb{R}^3_{+})}^2 +  \sqrt{\mu} \| \eps \nabla \up \|_{L^\infty_{T} L^2(\mathbb{R}^{3}_{+})}^2  \\
    \| \sqrt{\eta'}(\np, \up, \fp, \eps \nabla \np, \eps \div \up,  \eps \nabla \fp) \|_{L^2_{T} L^2(\mathbb{R}^3_{+})}^2 + \sqrt{\mu} \| \sqrt{\eta'} \eps \nabla \up \|_{L^2_{T} L^2(\mathbb{R}^3_{+})}^2  \\ 
  + \|\Gamma ( \np,  \up,  \eps \nabla \np, \eps \div \up, \eps \partial_{x_{3}} \fp )\|_{L^2_{T}L^2(\mathbb{R}^2)}^2 + \sqrt{\mu} \| \Gamma \eps \nabla \up \|_{L^2_{T} L^2(\mathbb{R}^2)}^2   \\
   \leq C(C_{a}, M) \Big( \|(\np, \up, \fp, \eps \nabla \np, \eps \div \up,  \eps \nabla \fp)(0)\|_{L^2(\mathbb{R}_{+}^3)}^2  \\
    + \mu   \|  \sqrt{\eta'} \eps \nabla u \|_{L^2_{T}L^2(\mathbb{R}^3_{+})}^2 + \sqrt{\mu}   \| \sqrt{\eta'}(\np, \up, \fp, \eps \nabla \np, \eps \div \up,  \eps \nabla \fp) (t)\|_{L^2_{T} L^2(\mathbb{R}^3_{+})}^2
    \\
    + \sqrt{\mu} \|\Gamma ( \np,  \up,  \eps \nabla \np, \eps \div \up, \eps \partial_{x_{3}} \fp )\|_{L^2_{T}L^2(\mathbb{R}^2)}^2  \\
    +  \|(\np, \up, \eps \nabla \np, \eps \nabla \up , \fp, \eps \nabla \fp)\|_{L^2_{T}L^2(\mathbb{R}^3_{+})}^2 
     + \| R^r\|_{L^2_{T}L^2(\mathbb{R}^3_{+})}^2 + \mu \|\sqrt{\eta'} R^s \|_{L^2_{T}L^2(\mathbb{R}^3_{+})}^2
    \Big).
 \end{multline}
  To conclude, we observe that by taking $\mu $ sufficiently small, the singular terms 
  \begin{multline}   \mu   \|  \sqrt{\eta'} \eps \nabla u \|_{L^2_{T}L^2(\mathbb{R}^3_{+})}^2 + \sqrt{\mu}   \| \sqrt{\eta'}(\np, \up, \fp, \eps \nabla \np, \eps \div \up,  \eps \nabla \fp) (t)\|_{L^2_{T} L^2(\mathbb{R}^3_{+})}^2  \\+ \sqrt{\mu} \|\Gamma ( \np,  \up,  \eps \nabla \np, \eps \div \up, \eps \partial_{x_{3}} \fp )\|_{L^2_{T}L^2(\mathbb{R}^2)} 
  \end{multline}
   can be absorbed in the right hand side.

We end the proof by integrating in time, by applying a Gronwall estimate and by using \eqref{hypmin}. 
\end{proof}

\section{Nonlinear stability}
In this section, we shall prove our main Theorem \ref{theomain}. As already mentioned, we shall actually prove a more precise version, see Theorem~\ref{main}.

Let us write the solution $(n^\eps, u^\eps, \phi^\eps)$ of \eqref{EP} under the form 
\begin{equation*}
n^\eps  \: = \:  n_a \: +  n , \quad u^\eps  \: = \: u_a  \: +  u , \quad \phi^\eps  \: = \:  \phi_a \: + \phi
\end{equation*}
where $n_a, u_a, \phi_a$ are shorthands for $n^\eps_{app}, u^\eps_{app}, \phi^\eps_{app}$ (defined in \eqref{ansatz}). 
Then, we get for $(n, u, \phi)$ the system
\begin{equation} \label{EPP}
\left\{
\begin{aligned}
& \pa_t  n +  (u_a + u ) \cdot \nabla n + n \,\div (u + u_{a})    + \div  (n_{a} u)  = \eps^{K} R_n,  \\
& \pa_t u + (u_a + u)  \cdot \na u + u \cdot \nabla u_{a}  + T^i   \left(\frac{\na n}{n_a +  n } - \frac{\na n_a}{n_a} \left( \frac{n}{n_a +  n} \right) \right) =  \na \phi + \eps^{K}R_u, \\
& \eps^2 \Delta \phi = n - e^{-\phi_a}( e^{ - \phi} - 1 \big) + \eps^{K+1} R_\phi.
\end{aligned}
\right.
\end{equation}
together with the boundary conditions
\begin{equation} \label{EPPbc1}
 \phi\vert_{x_3=0} = 0
\end{equation}
and the initial condition
\begin{equation} \label{EPPbc2}
u\vert_{t = 0} = \eps^{K+ 1} u_{0}, \quad n\vert_{t=0} = \eps^{K+ 1} n_{0}.
\end{equation}
Observe here that in \eqref{EPP}, $\eps^{K} R_n,  \eps^{K}R_u$ and $\eps^{K+1} R_\phi$ are remainders that appear because $(n_a,u_a,\phi_a)$ is not an exact solution of \eqref{EP}.

\bigskip

The main result of this section is 
\begin{theorem}
\label{main}
Let $m \geq 3$,  $(n_{0}, u_{0})\in H^m(\R^3_{+})$. Let $K \in \N^*, K \geq m$ and $(n_{a}, u_{a}, \phi_{a})$
 an  approximate solution at order $K$, given by  Theorem \ref{deriv}, which is defined on $[0, T_{0}]$. There exists
 $\eps_0>0$ and $\delta_{0}>0$  such that for every $\delta \in (0, \delta_{0}]$ and  for every $\eps \in(0, \eps_{0}]$, there is $C>0$ independent of $\eps$ such that the solution of  \eqref{EPP}-\eqref{EPPbc1}-\eqref{EPPbc2} is defined  on $[0,T_{0}]$
  and satisfies the estimate
  $$ \eps^{|\alpha|} \| \partial^\alpha(n,u, \phi, \eps\nabla  \phi) \|_{L^2(\mathbb{R}^3_{+})} \leq C \eps^{K}, \quad \forall t \in [0, T_0], \quad \forall \alpha
   \in \mathbb{N}^3, \, |\alpha| \leq m.$$ 
  
\end{theorem}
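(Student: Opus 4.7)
My plan is to prove Theorem \ref{main} by a standard continuation/bootstrap argument that iteratively feeds the weighted linear estimate of Proposition \ref{propL2}. Define the semiclassical energy
\begin{equation*}
\mathcal{N}(t)^2 \: := \: \sum_{|\alpha| \le m} \eps^{2|\alpha|} \bigl\| \partial^\alpha (n, u, \phi, \eps \nabla \phi)(t) \bigr\|_{L^2(\mathbb{R}^3_+)}^2
\end{equation*}
and, with a large constant $M$ to be chosen, adopt the bootstrap hypothesis $\mathcal{N}(t) \le M \eps^K$ on a maximal subinterval $[0,T^\star] \subset [0,T_0]$. By \eqref{EPPbc2} and \eqref{estirestes}, $\mathcal{N}(0) = O(\eps^{K+1})$, so the hypothesis holds at $t=0$ for $\eps$ small. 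Since $m\ge 3$, Sobolev embedding then implies that $\|(n,u,\phi)\|_{L^\infty}/\eps + \|\nabla(n,u,\phi)\|_{L^\infty} + \|\partial_t(n,\phi)\|_{L^\infty}$ is bounded by a constant independent of $M$, so the assumption \eqref{hypmin} of Proposition \ref{propL2} holds with some universal $M_0$; the Bohm condition \eqref{hypbohm} and the tangential trace smallness \eqref{hypvitan} are inherited, for $\eps$ small, from the corresponding properties of $(n_a,u_a,\phi_a)$ produced by Theorem \ref{deriv} together with the smallness assumption \eqref{smallness}.

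Next I apply the family of operators $D^\alpha := \eps^{|\alpha|} \partial^\alpha$, $|\alpha| \le m$, to \eqref{EPP}. Since $\phi$ vanishes on $\{x_3=0\}$ but $\partial^\alpha \phi|_{x_3=0}$ does not when $\alpha_3\ne 0$, I first estimate purely tangential and time derivatives ($\alpha_3=0$) by Proposition \ref{propL2}, and recover the normal ones algebraically: the Bohm condition makes the full hyperbolic system of \eqref{EPP} strictly normally hyperbolic, so that $\partial_3 n$, $\partial_3 u$ and $\eps^2\partial_3^2 \phi$ can be expressed in terms of tangential and time derivatives of one order lower, at the cost of a uniform constant. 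For a tangential multi-index $\alpha$, the quantities $(\dot n_\alpha,\dot u_\alpha,\dot\phi_\alpha) := D^\alpha(n,u,\phi)$ solve a system of the form \eqref{linear} with coefficients $(n_a+n,u_a+u,\phi_a)$, with a nonlinearity $h \in \{h_0, h_1\}$ produced by the linearization of $e^{-\phi_a}(e^{-\phi}-1)$, and with inhomogeneities
\begin{equation*}
r_\# \: = \: D^\alpha(\eps^K R_\#) \: + \: [D^\alpha,\text{coefficients}](n,u,\phi) \: + \: \mathcal{Q}_\alpha(n,u,\phi),
\end{equation*}
where $\mathcal{Q}_\alpha$ gathers the genuinely nonlinear commutators in $(n,u,\phi)$.

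The crux is to decompose these inhomogeneities according to \eqref{hypsource}. The term $D^\alpha(\eps^K R_\#)$ is controlled by \eqref{estirestes} and contributes $O(\eps^K)$ in $L^2$, hence belongs to $R^r$. Commutators involving tangential or time derivatives of $(n_a,u_a,\phi_a)$ are uniformly bounded by \eqref{boundnuf} and stay in $R^r$. Commutators involving \emph{normal} derivatives of $(n_a,u_a,\phi_a)$ produce terms whose pointwise size is controlled by $C\delta\eps^{-1} e^{-\gamma_0 x_3/\eps}$, and by the keystone inequality \eqref{betagrand} these are dominated by $C\mu\eta'(x_3)$, matching precisely the singular part $\mu\eta' R^s$ of \eqref{hypsource}. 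The nonlinear commutators $\mathcal{Q}_\alpha$ are handled tamely, placing one top-order factor in $L^2$ and the remaining factors in $L^\infty$ via Sobolev embedding, which bounds them by $C(M_0)\mathcal{N}(t)$.

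Applying Proposition \ref{propL2} order by order, summing over $|\alpha|\le m$, recovering normal derivatives from the equations, and absorbing the $\sqrt{\mu}$ factor on the left yields an integral inequality of the form
\begin{equation*}
\mathcal{N}(t)^2 \: \le \: C_0 \eps^{2K} \: + \: C_0 \int_0^t \mathcal{N}(s)^2 \, ds, \qquad t \in [0, T^\star],
\end{equation*}
with $C_0$ independent of $M$ and $\eps$. Gronwall's inequality then gives $\mathcal{N}(t) \le C_1 \eps^K$ uniformly on $[0, T^\star]$ with $C_1$ independent of $M$, and the choice $M := 2 C_1$ closes the bootstrap: the solution exists on all of $[0,T_0]$ and satisfies the claimed bound. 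The principal technical obstacle is the commutator bookkeeping of the previous paragraph; one must verify that each normal derivative of the boundary-layer profiles $(n_a,u_a,\phi_a)$, which scales like $\delta/\eps$, is swallowed by the $\sqrt{\eta'}$-weighted dissipation of Proposition \ref{propL2}. This absorption relies on the exponential localization \eqref{boundna0}, on the choice $\mu \le \gamma_0$ behind \eqref{betagrand}, and on the smallness of $\delta$ guaranteed by \eqref{smallintro}, and it explicitly degenerates without them, which is the structural reason why the supersonic regime requires the smallness assumption of Theorem \ref{theomain}.
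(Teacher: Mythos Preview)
Your plan follows the same strategy as the paper: a bootstrap argument, application of Proposition \ref{propL2} to the $(\eps\partial)^\alpha$--differentiated system \eqref{EPP}, verification that the commutators fit the decomposition \eqref{hypsource} (linear commutators with $\partial_{x_3}$--derivatives of the boundary layer profiles going into the singular part via \eqref{betagrand}, nonlinear commutators handled tamely and going into the regular part), and Gronwall. That is exactly what the paper does.

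There is one genuine methodological difference. The paper applies $\ZZ^\alpha=(\eps\partial)^\alpha$ for \emph{all} spatial multi--indices $|\alpha|\le m-1$ and invokes Proposition \ref{propL2} directly, without singling out tangential derivatives. You instead restrict to tangential $\alpha$ (where $\dot\phi_\alpha|_{x_3=0}=0$ genuinely holds) and recover the normal derivatives from the equations. Your route is more explicit about the Dirichlet condition \eqref{BCfp}, which the paper does not discuss when $\alpha_3\neq 0$. On the other hand, the paper's route avoids a separate normal--recovery step: since Proposition \ref{propL2} already controls one extra $\eps\nabla$ on top of $(\dot n,\dot u,\dot\phi)$, applying it at level $m-1$ gives level $m$ in one shot, and the missing higher normal derivatives of $\phi$ are read off the Poisson equation.

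Two points in your writeup deserve tightening. First, the sentence ``expressed in terms of tangential and time derivatives of one order lower'' is not right: the mass and momentum equations trade $\partial_3(n,u)$ for $\partial_t(n,u)$, tangential derivatives, and $\partial_3\phi$ \emph{of the same order}, so you must actually control time derivatives at the top order. The paper does exactly this kind of substitution, estimating $\eps\partial_t n$ through the mass equation and $\eps\partial_t\phi$ through the time--differentiated Poisson equation \eqref{poissondt} (both in $L^2$ and with the weight $\sqrt{\eta'}$, cf.\ \eqref{fin2}--\eqref{fin3}); you should do the analogue. Second, the $L^\infty$ bound on $\partial_t(n,\phi)$ required by \eqref{hypmin} does not come from Sobolev embedding alone: in the paper it is obtained from the transport equation for $n$ and, for $\partial_t\phi$, from the maximum principle applied to \eqref{poissondt}. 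With these two corrections your plan is complete and equivalent to the paper's proof.
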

We thus obtain:

\begin{corollary}
\label{coro}
Under the assumptions of Theorem \ref{main}, we have 
  \begin{equation}
   \left\| \Big(n^\eps-n_{a},u^\eps-u_{a}, \phi^\eps-\phi_{a} \Big)\right\|_{H^{m}(\mathbb{R}^3_{+})} \leq C \eps^{K-m}, \quad \forall t \in [0, T_0].
   \end{equation}
  In particular, we get the $L^2$ and $L^\infty$ convergences as $\eps \rightarrow 0$:
 \begin{equation}
  \begin{aligned}
&\sup_{[0,T_0]}\left(  \left\| n^\eps- n^0 - N^0\left(\cdot,\cdot,\frac{\cdot}{\eps}\right)\right\|_{L^\infty(\R^3_+)}  +   \| n^\eps- n^0 \|_{L^2(\R^3_+)}\right) \rightarrow 0, \\
&\sup_{[0,T_0]}  \left(    \| u^\eps- u^0 - U^0\left(\cdot,\cdot,\frac{\cdot}{\eps}\right) \|_{L^\infty(\R^3_+)}  +   \| u^\eps- u^0 \|_{L^2(\R^3_+)}\right) \rightarrow 0, \\
& \sup_{[0,T_0]}  \left(    \left\| \phi^\eps- \phi^0 - \Phi^0\left(\cdot,\cdot,\frac{\cdot}{\eps}\right) \right\|_{L^\infty(\R^3_+)}   +   \| \phi^\eps- \phi^0 \|_{L^2(\R^3_+)}\right) \rightarrow 0.
  \end{aligned}
  \end{equation}
\end{corollary}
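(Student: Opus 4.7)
The plan is to derive the corollary directly from Theorem \ref{main} combined with the structure of the approximate solution $(n_a,u_a,\phi_a)$ described in Theorem \ref{deriv}. The first inequality is essentially a rewriting of Theorem \ref{main}: summing the estimate $\eps^{|\alpha|}\|\partial^\alpha(n^\eps-n_a,u^\eps-u_a,\phi^\eps-\phi_a)\|_{L^2}\leq C\eps^K$ over $|\alpha|\leq m$ and throwing away the prefactor $\eps^{|\alpha|}$ yields
\[
\|(n^\eps-n_a,u^\eps-u_a,\phi^\eps-\phi_a)\|_{H^m(\R^3_+)}\;\leq\; C\eps^{K-m}.
\]
Taking $K$ sufficiently large (which is allowed, since $K$ is free in Theorem \ref{deriv}) makes the right-hand side as small as desired, and the Sobolev embedding $H^2(\R^3_+)\hookrightarrow L^\infty(\R^3_+)$, licit since $m\geq 3$, gives in particular $\|n^\eps-n_a\|_{L^\infty}+\|u^\eps-u_a\|_{L^\infty}+\|\phi^\eps-\phi_a\|_{L^\infty}\to 0$ as $\eps\to 0$.

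Next, the $L^2$ and $L^\infty$ convergences follow from the triangle inequality after expanding $n_a$ via the ansatz \eqref{ansatz}:
\[
n_a - n^0 \;=\; \sum_{i=1}^K\eps^i n^i(t,x)\;+\;\sum_{i=0}^K\eps^i N^i\!\left(t,y,\tfrac{x_3}{\eps}\right),
\]
and analogously for $u_a-u^0$ and $\phi_a-\phi^0$. The regular corrections $\sum_{i\geq 1}\eps^i n^i$ are $O(\eps)$ in both $L^2$ and $L^\infty$ by item (ii) of Theorem \ref{deriv}. The key point concerning the boundary-layer profiles is the standard scaling: the change of variable $z=x_3/\eps$, combined with the uniform exponential decay in $z$ granted by item (iii) of Theorem \ref{deriv}, yields
\[
\|N^i(\cdot,\cdot,\cdot/\eps)\|_{L^2(\R^3_+)}^2\;=\;\eps\int_{\R^2}\!\int_0^\infty |N^i(t,y,z)|^2\,dz\,dy \;=\; O(\eps),
\]
while $\|N^i(\cdot,\cdot,\cdot/\eps)\|_{L^\infty(\R^3_+)}\leq \|N^i\|_{L^\infty}=O(1)$. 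Hence the full sum of boundary-layer terms is $O(\sqrt\eps)$ in $L^2$; combined with the $L^2$ decay of $n^\eps-n_a$ established above, this proves the $L^2$ part of the corollary.

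For the $L^\infty$ convergences, the leading boundary-layer profile $N^0(\cdot,\cdot,\cdot/\eps)$ does \emph{not} vanish in $L^\infty$ as $\eps\to 0$, which is precisely the reason why the corollary subtracts it (and similarly $U^0$ and $\Phi^0$) from the left-hand side. Once this leading profile is removed, every remaining boundary-layer contribution is of the form $\eps^i N^i(\cdot,\cdot,\cdot/\eps)$ with $i\geq 1$, of size $O(\eps)$ in $L^\infty$, and the same reasoning applies to the velocity and potential components. No substantial obstacle appears in this deduction: the entire non-trivial content is contained in Theorem \ref{main}, whereas the corollary itself is essentially a matter of combining that theorem with the elementary boundary-layer scaling $\|f(\cdot/\eps)\|_{L^2(\R_+)}=\sqrt{\eps}\,\|f\|_{L^2(\R_+)}$ and with Sobolev embedding for the $L^\infty$ part.
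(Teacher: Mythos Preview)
Your argument is correct and matches the paper, which does not spell out a proof (the corollary is stated immediately after Theorem \ref{main} with the words ``We thus obtain''). One minor remark: you need not invoke the freedom to enlarge $K$, since Theorem \ref{main} already yields $\|(n,u,\phi)\|_{H^2}\leq C\eps^{K-2}$ with $K\geq m\geq 3$, so the $L^\infty$ convergence of $n^\eps-n_a$ etc.\ follows directly by Sobolev embedding without any additional choice.
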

Then Theorem \ref{theomain} is a straightforward consequence of this corollary.

\bigskip

From now on, our goal is to prove Theorem \ref{main}.

For $\eps >0$ fixed, since  in the equation \eqref{EPP}, the  term involving $\phi$ in the second equation can be considered
as a semi-linear term,  the  known local existence results for the compressible Euler equation with strictly dissipative boundary conditions (\cite{BenzoniSerre}) can be applied to 
 the system \eqref{EPP}. 
  Let us assume that  $(n_{0}, u_{0}) \in H^m(\R^3_{+})$ for $m \geq 3$, 
   then there exists $T^\eps >0$ and  a unique solution of \eqref{EPP} defined on $[0,T^\eps]$ such that
     $ u \in \mathcal{C}([0, T^\eps), H^m)$ and that there exists $M>0$ independent of $\eps$ 
     such that the assumptions \eqref{hypmin} and \eqref{hypbohm} of Proposition \eqref{propL2} are satisfied
  with 
\begin{equation} \label{h0h1}
h_0(\phi) \: := \:  -{ e^{-\phi} - 1 + \phi \over \phi}, \quad h_1(\phi) \: := \:  e^{-\phi}- 1.  
\end{equation}

Thanks to the well-posedness in $H^m$ for $m \geq 3$ of the system \eqref{EPP}, we can define
$$ T^\eps= \sup \big\{ T \in [0,  T_{0}], \quad   \forall t \in [0,T],  \|(n,u, \phi)\|_{H^m_{\eps}(\R^3_{+})} \leq \eps^r \big\}$$
where $r$ is chosen such that
\begin{equation}
\label{choixr}
 5/2<r<K
 \end{equation}
 and  the $H^m_{\eps}$ norm is defined by 
 $$ \|f\|_{H^m_{\eps}(\R^3_{+})} = \sum_{ | \alpha | \leq m} \eps^{|\alpha|} \| \partial_{x_{1}}^{\alpha_{1}} \partial_{x_{2}}^{\alpha_{2}} \partial_{x_{3}}^{\alpha_{3}} f \|_{L^2(\R^3_{+})}.$$

   The difficulty is thus to prove that  the solution actually exists on an interval of time independent of $\eps$.
    We shall  get this result  by proving  uniform energy estimates combined with the previous local existence result when  the initial data
     and the source term are sufficiently small (i.e. when the approximate solution $ (n_{a}, u_{a})$ is sufficiently accurate).
     
     For $m \geq 3$, we shall prove by using a priori  estimates  that  under the assumptions of Theorem \ref{main}, we have 
  for    every $T\in [0, T^\eps]$ the control
 \begin{equation}
 \label{Hcmeps}
 \|\big(n,u, \phi ) (T) \|_{H^m_{\eps}(\R^3_{+})} \leq C(C_{a}, M) \eps^{K} e^{ T { C(C_{a}}, M) \over \sqrt{\mu}}
 \end{equation}
 with  $C(C_{a}, M)$ independent of $\eps$,  $ \mu$ and $T$ for $ \eps \in (0, 1], $ $\mu \in (0, 1]$ and $T \in [0, T_{0}]$. 
 
 To prove this estimate, we can apply the operator $ \ZZ^\alpha:=  (\eps \pa)^\alpha$ to \eqref{EPP} for $| \alpha | \leq m-1.$ We obtain for
  $ \ZZ^\alpha(n,u, \phi)$ the system
\begin{equation} \label{linear2}
\left\{
\begin{aligned}
 \pa_t  \ZZ^\a n +  (u_{a}+ u )\cdot \nabla \ZZ^\a n + (n_{a}+ n) \div \ZZ^\a u  =\mathcal{C}_{n}  + \eps^K \ZZ^\alpha R_{n} & , \\ 
 \pa_t \ZZ^\a u + (u_a + u ) \cdot \na \ZZ^\a u  +  T^i   \frac{\na \ZZ^\a n}{n_a + n}  \\
 =  \na \ZZ^\a \phi + \mathcal{C}_u + \eps^K \ZZ^\a R_{u} & , 
 \\
 \eps^2 \Delta \ZZ^\a \phi = \ZZ^\a n + e^{-\phi_a} \ZZ^\a \phi( 1  + h)  + \mathcal{C}_\phi+ \eps^{K+1} \ZZ^\a R_{\phi}& .
\end{aligned}
\right.
\end{equation}
One has $h = h_0$  for $|\alpha| = 0$ and $h=h_1$ for $|\alpha| \ge 1$.  The functions $\mathcal{C}_n,\mathcal{C}_u$ and $\mathcal{C}_\phi$ are remainders mostly due to commutators; in $\mathcal{C}_n$, we include  the term $-\ZZ^\a [u \cdot \nabla n_a] - \ZZ^\a [n \, \div u_{a}] $ as well, while in $\mathcal{C}_u$ we also include $ -\ZZ^\a [u  \cdot \na u_a]  + T^i    \ZZ^\a\left[ \frac{\na n_a}{n_a} \left( \frac{ n}{n_a + n}  \right)\right]$.
One can observe that this corresponds to the ``abstract'' system that we have studied in Proposition \ref{propL2}.

The assumptions \eqref{hypmin}, \eqref{hypbohm} of Proposition \ref{propL2} are matched on $[0, T^\eps]$ for $\eps$ sufficiently small
 since they are verified by the approximate solution. The only estimate that does not follow directly from the definition of $T^\eps$ is
  the estimate of $\| \partial_{t}(n, \phi) \|_{L^\infty}$.
   For $\partial_{t}n$, we immediately get by using the first line of \eqref{EPP} that
   $$ \| \partial_{t} n\|_{L^\infty} \leq C_{a} ( \eps^{ r-1} + \eps^{K}) \leq M,$$
    for $M$ sufficiently small. In a similar way,  by taking the time derivative in  the elliptic equation for  $\phi$, we have
    \begin{equation}
    \label{poissondt} - \eps^2 \Delta \partial_{t} \phi  + e^{- (\phi_{a} + \phi)} \partial_{t} \phi =  - \partial_{t}n +  \partial_{t} (e^{-\phi_{a}}) ( e^\phi - 1) + \eps^{K+1} \partial_{t} R_{\phi}.
    \end{equation}
     By using the maximum principle for this elliptic equation, we thus get that
     $$  \|\partial_{t} \phi \|_{L^\infty} \leq C(C_{a}, M) \big( \| \partial_{t}n \|_{L^\infty} + \| \phi\|_{L^\infty} + \eps^{K+ 1}\big) \leq C(C_{a}, M) \eps^{r-1}.$$

 To use the result of Proposition \ref{propL2}, we also   need to check that the remainders $\mathcal{C}_{n}, \,  \mathcal{C}_{u}$  can be split as in  the assumption \eqref{hypsource}.

  To describe the structure of these remainders, we can start   with the  study of  $\mathcal{C}_{n}$. We can distinguish between a linear part which corresponds to
  $$ \mathcal{C}_{n}^{l}= - [ \ZZ^\alpha, u_{a}] \cdot \nabla n -  \ZZ^\alpha(n \, \div u_{a}) - [\ZZ^\alpha,  n_{a}] \div u -  \ZZ^\alpha( u \cdot \na  n_{a})$$
   and a nonlinear part which is
   $$ \mathcal{C}_{n}^{nl}= -  [\ZZ^\alpha, u] \cdot \nabla n -  [\ZZ^\alpha, n ] \div u.$$
   By using standard tame estimates in Sobolev spaces, the  nonlinear part can be seen as a regular part in the decomposition \eqref{hypsource}. Indeed,  we  easily get that for some $C>0$ independent of $\eps$, we have
  $$ \| \mathcal{C}_{n}^{nl} \|_{H^1_{\eps}} \leq C \big( \|\nabla u \|_{L^\infty}  +  \| \nabla n \|_{L^\infty}  \big)   \| (n,u) \|_{H^{| \alpha |+ 1}_{\eps}}$$
   and hence by using the Sobolev embedding,  we obtain that on  $[0, T^\eps]$,
   $$ \| \mathcal{C}_{n}^{nl} \|_{H^1_{\eps}} \leq C \eps^{r- {5 \over 2}}  \|(n, u) \|_{H^m_{\eps}}.$$
    
       To estimate the linear part $\mathcal{C}_{n}^{l}$, we can use  the Leibnitz formula and the estimates \eqref{boundnuf}, \eqref{boundna} on the approximate solution.
    By using the $\eps$ weighted derivatives, the  terms that lead to singular terms
     are  the ones of   the first type
     $$  \eps^k \partial^{\beta} n_{a} \cdot \nabla \partial^\gamma u$$
      or the ones of the second type
      $$  \eps^k \partial_{x_3} \partial^\beta n_{a} \cdot \partial^\gamma u$$
      with $| \beta |+ | \gamma |= k.$
       Indeed, for the terms of the second type, 
       we obtain a singular term as soon as $\beta$ is made only with  $x_{3}$ derivatives and when the all  hit the boundary layer term in $n_{a}$, in this case, we can use 
        \eqref{betagrand} to estimate  them by 
 \begin{equation}
 \label{commu}
  C_{a} \big( \mu \,\eta' |\langle \eps \partial \rangle^m (n,u) | +     |\langle \eps \partial \rangle^m (n,u) |\big)
  \end{equation}
        with the notation
        $$  |\langle \eps \partial \rangle^m (n,u) |  = \sum_{| \beta | \leq m} | (\eps \partial)^\beta (n,u) |.$$
        We thus  see the first term in \eqref{commu} as a singular part of the source term and the second one as a regular part in the decomposition \eqref{hypsource}.

       For the  terms of the first type, in order to estimate them  with   $u$ in a $ H^{m}_{\eps}$ space, we need to write them under the form
      $$ \eps^{|\beta|- 1} \partial^\beta n_{a} \cdot (\eps \nabla ) (\eps \partial)^\gamma u$$
       and hence we obtain again a singular term as soon as   $\partial^\beta$  is made only of $x_{3}$ derivatives and that they all hit the boundary layer terms in $n_{a}$.  By using~\eqref{betagrand}, 
       we can estimate this contribution  again by 
                 $$   C_{a} \big( \mu \,\eta' |\langle \eps \partial \rangle^m (n,u) | +     |\langle \eps \partial \rangle^m (n,u) |\big)$$
        
        We thus obtain an estimate
     $$   |\mathcal{C}^l| + |\eps \nabla \mathcal{C}^l| \leq  C_{a} \big( \mu \,\eta' |\langle \eps \partial \rangle^m (n,u) | +     |\langle \eps \partial \rangle^m (n,u) |\big).$$
        
        We can proceed in the same way to  handle $\mathcal{C}_{u}$. For example, for the commutator 
        $$ \mathcal{C}^p:=  [ \ZZ^\alpha,{ 1 \over n_{a} + n}] \na n, $$ 
        we can expand
        $$ (\mathcal{C}^p, \eps \nabla \mathcal{C}^p)= \sum_{ | \beta| +  | \gamma | \leq m, \, \beta \neq 0}  \star_{\beta, \gamma} \big( \ZZ^\beta n_{a} \nabla \ZZ^\gamma n + \ZZ^\beta n   \nabla \ZZ^\gamma  n\big)$$
         where  $\star_{\beta, \gamma}$ stand for terms which are uniformly bounded in $L^\infty$ on $[0, T^\eps]$. By using the same arguments as before, we thus get that
         $$ |  \mathcal{C}^p| +  |\eps \nabla \mathcal{C}^p | \leq  C(C_{a}, M) \big( \mu \,\eta' |\langle \eps \partial \rangle^m (n,u) | +     |\langle \eps \partial \rangle^m (n,u) |\big)
          + C(C_{a},  M)  |  \tilde{ \mathcal{C}}^p |$$
          where 
         $  |  \tilde{ \mathcal{C}}^p | $ is bounded in $L^2$ by
         $$  C(C_{a}, M) \eps^{ r- { 5 \over 2} } \| n \|_{H^m_{\eps}}$$
          by using the  usual product estimates in Sobolev spaces.

         For $ \eps^{-1}\mathcal{C}_{\phi}$, we can use again the bounds on the approximate solution \eqref{boundnuf}, \eqref{boundna} and standard tame estimates in Sobolev spaces.
          Indeed, we have that $ \eps^{-1}\mathcal{C}_{\phi}$ can be expanded as a sum of terms either of the form  
          $$ \eps^{-1} (\ZZ^\alpha e^{- \phi_{a}}  )(e^{-\phi}-  1) $$
           which is bounded by 
           $ C(C_{a}, M) \big( \mu \,\eta' | \phi  | + | \phi|)$
           or of the form 
         $$  \eps^{-1}  \ZZ^\beta ( e^{-\phi_{a}}) \, \ZZ^\gamma \phi  \, e^{- \phi}$$
          with $ | \beta | + | \gamma|  \leq m$ which can also be bounded by 
          $ C(C_{a}, M) \big( \mu \,\eta' | \phi  | + | \phi|)$
          or
          $$  \eps^{-1}  \ZZ^\beta ( e^{-\phi_{a}}) \, \ZZ^{\gamma_{1} } \phi  \cdots  \ZZ^{\gamma_{r}} \phi \, e^{-\phi}$$
           with $ r \geq 2$,  $| \beta| + | \gamma_{1}| +  \cdots +  |  \gamma_{r}| \leq m$ and $\beta \neq 0$.
            Since this term is at least quadratic in $\phi$, we can use tame estimates to 
            bound it  in $L^2$ by 
            $$ \eps^{-1} C(C_{a}, M) \| \phi \|_{L^\infty} \| \phi \|_{H^{m-1}_{\eps}} \leq C(C_{a}, M) \eps^{ r- { 5 \over 2 }} \| \phi \|_{H^{m-1}_{\eps}}.$$
         We thus get that we can split $\eps^{-1} \mathcal{C}_{\phi}$ according to  \eqref{hypsource}:
         $$ | \eps^{-1} \mathcal{C}_{\phi} | \leq C(C_{a}, M ) \big( \mu \eta' | \langle \eps \pa \rangle^{m-1} \phi | + | \tilde{ \mathcal{C}}_{\phi}| \big)$$
          with
          $$ \| \tilde{ \mathcal{C}}_{\phi}\|_{L^2} \leq C(C_{a}, M ) \| \phi \|_{H^{m-1}_{\eps}}.$$
          
           By using the above expansions of $\mathcal{C}_{\phi}$, we also easily obtain that
           $$ | \partial_{t} \mathcal{C}_{\phi}| \leq C(C_{a}, M) \big(  \eps \eta' | \langle  \eps \pa  \rangle^{m-1} \partial_{t} \phi | + | \mathcal{C}_{\phi}^1|$$
           where $\mathcal{C}_{\phi}^1$ can be estimated as
          $$ \|{ \mathcal{C}}_{\phi}^1\|_{L^2} \leq C(C_{a}, M )  \eps\| \pa_{t} \phi \|_{H^{m-1}_{\eps}}.  $$  
          
          In summary, we have proven that the commutator terms can be split under the form
         $$ \big|  \big(\mathcal{C}_{n}, \mathcal{C}_{u}, \eps \nabla \mathcal{C}_{n}, \eps \nabla  \mathcal{C}_{u},  { \mathcal{C}_{\phi} \over \eps}, \partial_{t} \mathcal{C}_{\phi}\big) |
          \leq C(C_{a}, M) \big(\mu \eta' \mathcal{C}^s +\mathcal{ C}^r \big)$$
           with
           \begin{align*}
           &  \| \sqrt{\eta'} \mathcal{C}^s  \|_{L^2} \leq C(C_{a}, M)  \big( \|\sqrt{\eta'} (n,u, \phi)\|_{H^{m}_{\eps}}  +    \eps    \| \sqrt{\eta'} \partial_{t}\phi\|_{H^{m-1}_{\eps}} \big),  \\
            &   \| \mathcal{C}^r \|_{L^2} \leq C(C_{a}, M) \big(  \| (n, u, \phi) \|_{H^m_{\eps}} + \eps \| \partial_{t} \phi \|_{H^{m-1}_{\eps}} \big).
            \end{align*}

Consequently, by using Proposition \ref{propL2}, we obtain for every $T \in [0, T^\eps]$
\begin{multline}
\label{fin1}
 \sqrt{\mu} \big(\|(n,u,\phi)\|_{L^\infty_{T} H^m_{\eps}}^2 + \| \sqrt{\eta'} (n,u, \phi) \|_{L^2_{T}H^m_{\eps}}^2 \big)
 \leq C(C_{a}, M)  \Big(  \eps^{2K} +   \mu    \|\sqrt{\eta'} (n,u, \phi)\|_{L^2_{T}H^{m}_{\eps}}^2   \\+ \mu    \| \sqrt{\eta'}\, \eps \partial_{t}\phi\|_{L^2_{T}H^{m-1}_{\eps}}^2
  +  \| (n, u, \phi) \|_{L^2_{T}H^m_{\eps}}^2 + \| \eps  \partial_{t} \phi \|_{L^2_{T}H^{m-1}_{\eps}}^2 \Big) .
  \end{multline}  
   To conclude, it remains to estimate the terms involving $\partial_{t} \phi$ in the right hand side of the above estimate.
     By using the elliptic equation \eqref{poissondt} and standard energy estimates,  we get that
     $$  \| \eps \nabla \partial_{t} \phi (t)  \|_{H^{m-1}_{\eps}}  + \|  \partial_{t} \phi  (t)\|_{H^{m-1}_{\eps}} \leq C(C_a, M)\big (  \| \partial_{t} n (t) \|_{H^{m-1}_{\eps}}
      +  \| \phi (t) \|_{H^{m-1}_{\eps}} + \eps^{K+ 1} \big)
     $$ 
      for every $t \in [0, T^\eps]$.
     Therefore, we obtain in particular that
     $$ \eps  \|  \partial_{t} \phi  (t)\|_{H^{m-1}_{\eps}} \leq C(C_a, M)\big (   \eps \| \partial_{t} n (t) \|_{H^{m-1}_{\eps}} +   \| \phi( t)  \|_{H^{m-1}_{\eps}} + \eps^{K+ 1} \big).$$
      We can then use the first line of \eqref{EPP} to express $\eps \partial_{t}n$ in terms of space derivatives, this yields  that on $ [0, T^\eps]$, we have
    \begin{equation}
    \label{fin2}   \eps  \|  \partial_{t} \phi (t)  \|_{H^{m-1}_{\eps}} \leq C(C_a, M)\big (    \| ( n, u) (t) \|_{H^{m}_{\eps}} +   \| \phi (t) \|_{H^{m-1}_{\eps}} + \eps^{K+ 1} \big).
    \end{equation}
       Finally, we can estimate $ \| \sqrt{\eta'}\, \eps \partial_{t}\phi\|_{L^2_{T}H^{m-1}_{\eps}}$.  This follows again from estimates on the elliptic equation
        \eqref{poissondt}. We just use the  weight $\eta'$ in the estimates as in the derivation of \eqref{3.37}.
         This yields for $t \in [0, T^\eps]$,  
       $$ \| \sqrt{\eta'}\,  \partial_{t}\phi (t) \|_{H^{m-1}_{\eps}} \leq C(C_{a}, M)  \big(\| \sqrt{\eta'} \partial_{t}n  (t)\|_{H^{m-1}_{\eps}} +  \|\sqrt{\eta'}\, \phi (t) \|_{H^{m-1}_{\eps}}
        + \eps^{K}\big).$$
         Consequently, by using again the first line of \eqref{EPP},  to express $\partial_{t}n$, we obtain
      \begin{equation}
      \label{fin3}  \eps   \| \sqrt{\eta'}\,  \partial_{t}\phi (t) \|_{H^{m-1}_{\eps}} \leq C(C_{a}, M)  \big( \| \sqrt{\eta'} (n, u, \phi)  (t)\|_{H^{m}_{\eps}} + \| (n,u, \phi ) (t)\|_{H^m _{\eps}}  + \eps^{K+1}\big).
      \end{equation}
         
 From \eqref{fin1}, \eqref{fin2}, \eqref{fin3}, we have   thus proven that for every $T \in [0, T^\eps]$, we have 
 \begin{multline*}
 \sqrt{\mu} \big(\|(n,u,\phi)\|_{L^\infty_{T} H^m_{\eps}}^2 + \| \sqrt{\eta'} (n,u, \phi) \|_{L^2_{T}H^m_{\eps}}^2 \big)
 \\ \leq C(C_{a}, M)  \Big(  \eps^{2K} +   \mu    \|\sqrt{\eta'} (n,u, \phi)\|_{L^2_{T}H^{m}_{\eps}}^2  
  +  \| (n, u, \phi) \|_{L^2_{T}H^m_{\eps}}^2  \Big) .
  \end{multline*}  
  For $\mu$ sufficiently small, the singular term $  \mu    \|\sqrt{\eta'} (n,u, \phi)\|_{L^2_{T}H^{m}_{\eps}}^2$ can be absorbed in the left hand side, 
   this yields
   $$  \sqrt{\mu} \|(n,u,\phi)\|_{L^\infty_{T} H^m_{\eps}}^2  \leq C(C_{a}, M)  \Big(  \eps^{2K}  +  \| (n, u, \phi) \|_{L^2_{T}H^m_{\eps}}^2  \Big)$$
    and hence  from the Gronwall inequality, we obtain
    $$  \|(n,u,\phi) (T)\|_{ H^m_{\eps}}^2 \leq C(C_{a}, M) \eps^{2 K} e^{ T  { C(C_{a}, M) \over \sqrt{\mu}} }, \quad \forall T \in [0, T^\eps ].$$ 
     We have thus proven \eqref{Hcmeps}.
     
       The parameter $\mu$ can now be considered as fixed. By standard continuation arguments, we next obtain that for $\eps$ sufficiently
        small $T^\eps \geq T_{0}$ and that  on $[0, T_0]$, we have
        $$  \sup_{T\in [0, T_0]}  \|(n,u,\phi) (T)\|_{ H^m_{\eps}}^2 \leq C(C_{a}, M) \eps^{2 K} e^{ T_0 { C(C_{a}, M) \over \sqrt{\mu}} }.$$
        This ends the proof of Theorem \ref{main}.

  \bibliographystyle{plain}
\bibliography{sheath}

\end{document}